\newtheorem{thm}{Theorem}[section]
\newtheorem{prop}[thm]{Proposition}
 \newtheorem{cl}[thm]{Claim}
\theoremstyle{definition}
\newtheorem{defn}[thm]{Definition}
\theoremstyle{remark}
\newtheorem{rem}[thm]{Remark}
\let\c@equation\c@thm
\numberwithin{equation}{section}
\title{Local coordinate systems on quantum flag manifolds}
\author{Farrokh Razavinia}
\email{f-razavinia@edu.hse.ru \\ farrokh.razavinia@gmail.com}
\address{Department of discrete mathematics; Moscow institute of physics and technology (MIPT); 141702, Russian Federation, Moscow Region, Dolgoprudny,Pervomayskaya street, 32 corp.1.}
\date{June 30, 2015}
\begin{document}

\begin{abstract}

This paper consist of 3 sections. In the first section, we will give a brief introduction to the ''Feigin's homomorphisms'' and will see how they will help us to prove our main and fundamental theorems related to quantum Serre relations and screening operators.\\
In the second section, we will introduce Local integral of motions as the space of invariants of nilpotent
part of quantum affine Lie algebras and will find two and three point invariants  in the case of $U_q(\hat{sl_2}) $ by using Volkov's scheme.\\
In the third section, we will introduce lattice Virasoro algebras as the space of invariants of Borel part $U_q(B_{+})$ of $U_q(g)$ for simple Lie algebra $g$ and will find the set of generators of Lattice Virasoro algebra connected to $sl_2$ and $U_q(sl_2)$\\
And as a new result, we found the set of some generators of lattice Virasoro algebra.

\end{abstract}
\maketitle
 

\section{Introduction}
In this section we will introduce Feigin's homomorphisms and we will see that how they will help us to prove our main and fundamental theorems on screening operators.\\
"Feigin's homomorphisms" was born in his new formulation on quantum Gelfand-Kirillov conjecture, which came on a public view at RIMS in 1992 for the nilpotent part $U_q(\mathfrak{n} )$, that are now known as "Feigin's Conjecture".\\
In that mentioned talk, Feigin proposed the existence of a family of homomorphisms from a quantized enveloping algebra to rings of skew-polynomials. These "homomorphisms" are became very useful tools for to study the fraction field of quantized enveloping algebra. \cite{6}
\subsection*{Feigin's homomorphisms on $U_q(\mathfrak{n}$)}
Here we will briefly try to show that what are Feigin's homomorphisms and how they will guide us to reach and to prove that the screening operators are satisfying in quantum Serre relations.\\

Set $C$ as an arbitrary symmetrizable Cartan matrix of rank $r$, and $\mathfrak{n} = \mathfrak{n}_+$ the
standard maximal nilpotent sub-algebra in the Kac-Moody algebra associated with $C$ (thus, $n$ is generated by the elements $E_1, . . ., E_r$ satisfying in the Serre relations). As always $U_q(n)$
is the quantized enveloping algebra of $n$. And  $A = (A_{ij}) = ( d_i c_{ij} )$ is the symmetric matrix corresponding
to
$C$ for non-zero relatively prime integers
$d_1, . . . , d_n$ such that $d_ia_{ij} = d_ja_{ji}$ for all $i, j$.
And set $g$ as a Kac-Moody Lie algebra attached to $A$, on generators $E_i, F_i, H_i, 1 \leq i \leq n$ .\cite{11} Let us to mention some of the structures related to $g$ that we will use them here:\\
$\hspace*{17pt}$ the triangular decomposition $g = \mathfrak{n}_{-} \oplus \mathfrak{h} \oplus \mathfrak{n}_{+}$;\\
$\hspace*{17pt}$ the dual space $\mathfrak{h}^*$; elements of $\mathfrak{h}^*$
will be referred to as weights;\\
$\hspace*{17pt}$ the root space decomposition $\mathfrak{n}_{\pm} = \oplus_{\alpha \in {\Delta_{\pm}}} g_{\alpha}, g_{\alpha_i}= \mathbb{C}E_i$;\\
$\hspace*{17pt}$ the root lattice $\Lambda \in \mathfrak{h}^* , \{ \alpha_1 , \cdots , \alpha_n  \} \subset \Delta_{+} \subset \mathfrak{h}^*$ being the set of simple
roots;\\
$\hspace*{17pt}$  the invariant bilinear form $\Lambda \times \Lambda \rightarrow \mathbb{Z}$ defined by $<\alpha_i , \alpha_j> = d_i a_{ij}.$ \cite{11}

Set $A_1$ and $A_2$ as a $\Lambda-$ graded associative algebras and define a $q-$ twisted tensor product as the algebra $A_1 \bar{\otimes} A_2$ isomorphic with $A_1 \otimes A_2$ as a linear space with multiplication given by $(a_1 \otimes a_2) \cdot (a'_1 \otimes a'_2) := q^{<{\alpha}'_1 , \alpha_2>} a_1 a'_1 \otimes a_2 a'_2,  $  where ${\alpha}'_1 = deg(a'_1)$ and $\alpha_2 = deg(a_2).$ And by this definition $A_1 \bar{\otimes} A_2$ become a $\Lambda-$ graded algebra.
\begin{prop}
	Set $g$ an arbitrary Kac-Moody algebra, then the map
	\begin{equation}
	\bar{\Delta}:U_{q}^{\pm}(g) \rightarrow U_{q}^{\pm}(g) \bar{\otimes} U_{q}^{\pm}(g)
	\end{equation}
	Such that
	\[  \left\{
	\begin{array}{l l}
	\bar{\Delta}(1) := 1 \otimes 1 \\
	\bar{\Delta} (E_i) := E_i \otimes 1 + 1 \otimes E_i \\
	\bar{\Delta} (F_i) := F_i \otimes 1 + 1 \otimes F_i  \\
	\end{array} \right.\]
	for $1 \leqslant i \leqslant n,$ is a homomorphism of associative algebras. \cite{9}\cite{6}
	
\end{prop}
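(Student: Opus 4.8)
The plan is to verify that $\bar\Delta$ is a well-defined algebra homomorphism by checking that the prescribed images of the generators respect the defining relations of $U_q^{\pm}(\mathfrak{g})$. Since $U_q^{+}(\mathfrak{g})$ is the associative algebra generated by $E_1,\dots,E_n$ subject only to the quantum Serre relations, the universal property of a presentation by generators and relations tells us that a map on generators extends uniquely to an algebra homomorphism precisely when the images satisfy those same relations inside the target $U_q^{\pm}(\mathfrak{g})\,\bar\otimes\,U_q^{\pm}(\mathfrak{g})$. So the whole argument reduces to a relation-by-relation check, and the crucial subtlety is that the target carries the $q$-twisted multiplication rather than the ordinary tensor product multiplication.

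First I would set up the bookkeeping for the twisted product. Each $E_i$ is homogeneous of degree $\alpha_i$ in the $\Lambda$-grading, so on the generators the twisting rule $(a_1\otimes a_2)\cdot(a_1'\otimes a_2') = q^{\langle \deg a_1',\,\deg a_2\rangle} a_1a_1'\otimes a_2a_2'$ produces explicit powers of $q$ governed by the bilinear form $\langle\alpha_i,\alpha_j\rangle = d_i a_{ij}$. I would first record how a single factor $E_i\otimes 1 + 1\otimes E_i$ multiplies against another such factor, tracking the degree of whichever component sits in the relevant slot, and confirm that $\bar\Delta$ is at least compatible with the grading (it sends a degree-$\alpha_i$ generator to a degree-$\alpha_i$ element of the twisted tensor square). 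This is the routine but essential step that fixes all the $q$-exponents appearing later.

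The substantive step, and the main obstacle, is showing that the elements $\bar\Delta(E_i)$ satisfy the quantum Serre relation
\[
\sum_{k=0}^{1-a_{ij}} (-1)^k \begin{bmatrix} 1-a_{ij}\\ k\end{bmatrix}_{q_i} \bar\Delta(E_i)^{1-a_{ij}-k}\,\bar\Delta(E_j)\,\bar\Delta(E_i)^{k} = 0
\]
in the twisted tensor square. The natural strategy is to expand $\bar\Delta(E_i)^m = (E_i\otimes 1 + 1\otimes E_i)^m$ by a $q$-binomial theorem in the twisted algebra: because $E_i\otimes 1$ and $1\otimes E_i$ $q$-commute up to the factor $q^{\langle\alpha_i,\alpha_i\rangle}$ dictated by the twisting, their powers should recombine into Gauss $q$-binomial coefficients, giving a twisted analogue of the primitive-element comultiplication. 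Substituting these expansions into the Serre sum and collecting terms of each bidegree, the claim is that the coefficient in every bidegree is itself (a scalar multiple of) the Serre expression, which vanishes because $E_i,E_j$ already satisfy the Serre relations in each tensor factor. Making this collapse work hinges on a $q$-Vandermonde / $q$-binomial identity for the coefficients, and the delicate point is ensuring the $q$-powers coming from the twisting match exactly the $q_i$ appearing in the $q$-binomial coefficients of the Serre relation.

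I would therefore isolate this reduction as the heart of the proof: after establishing the twisted $q$-binomial expansion, the vanishing of the Serre combination is a purely combinatorial consequence of the $q$-Vandermonde identity together with the Serre relations in the two factors, with no genuinely new input. The case of $F_i$ is handled identically by symmetry, using that $F_i$ generates $U_q^{-}(\mathfrak{g})$ under the mirror-image Serre relations, and $\bar\Delta(1)=1\otimes 1$ is the unit of the twisted product by inspection of the multiplication rule. Collecting these, the universal property yields the claimed homomorphism, so I expect the only real work to lie in the $q$-exponent bookkeeping of the twist, with the combinatorial identity being the step most likely to require care.
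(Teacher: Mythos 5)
The first thing to say is that the paper does not prove this proposition at all: it is stated as an imported result, with the citations \cite{r9} and \cite{r6} standing in for the argument, and the text moves straight on to the remark that follows. So there is no internal proof to compare yours against; your proposal has to be judged on its own, and on its own terms it is the standard argument from the cited literature and is correct in outline. Since $U_{q}^{\pm}(g)$ is presented by the generators $E_i$ (resp. $F_i$) subject only to the quantum Serre relations, the universal property of the presentation reduces everything to verifying those relations on the elements $E_i\otimes 1+1\otimes E_i$ in the twisted algebra, exactly as you say; there the two summands $q$-commute, $(1\otimes E_i)(E_i\otimes 1)=q^{\langle\alpha_i,\alpha_i\rangle}(E_i\otimes 1)(1\otimes E_i)$, so powers expand by the Gauss binomial formula with parameter $q_i^{2}$, $q_i=q^{d_i}$, and the Serre combination collapses. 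Two points would need to be executed to turn your outline into a complete proof. First, the bookkeeping you flag is genuine: the Gauss binomials in the parameter $q_i^{2}$ differ from the balanced binomials ${N \choose k}_{q_i}$ appearing in the Serre relation by factors $q_i^{k(N-k)}$, and these powers must be tracked. Second, your description of the collapse is slightly off: it is not that the coefficient in every bidegree is a multiple of the Serre expression. Rather, writing $u_{ij}$ for the Serre element, the two extreme bidegrees (all $E_i$'s and the $E_j$ landing in the same factor) reproduce exactly $u_{ij}\otimes 1$ and $1\otimes u_{ij}$, which vanish because the relations hold in each factor, while every mixed bidegree has scalar coefficient that vanishes identically by the $q$-Vandermonde identity --- equivalently, $u_{ij}$ is primitive for $\bar{\Delta}$ at the level of the twisted tensor square of the free algebra. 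Neither point is an obstruction; this is precisely the verification carried out in \cite{r9} (and, in the formulation where the twist is absorbed into the Cartan generators, in Lusztig's and Jantzen's books), so your plan is sound and, for what it is worth, already more of a proof than the paper supplies.
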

\begin{rem}
	there is no such map as $U_{q}^{\pm}(g) \rightarrow U_{q}^{\pm}(g) \bar{\otimes} U_{q}^{\pm}(g)$ in the case that $g$ is an associative algebra. \cite{9}
\end{rem}
And as always after defining a co-multiplication, $\bar{\Delta}$, then we can extend it by a iteration as a sequence of maps \cite{1}\\
\begin{equation}
{\bar{\Delta}}^n : U_{q}^{-}(g) \rightarrow U_{q}^{-}(g)^{\otimes n}, ~  ~ n = 2, 3,...
\end{equation}
determined by ${\bar{\Delta}}^2 = \bar{\Delta}, ~  ~  {\bar{\Delta}}^m = (\bar{\Delta} \otimes id) \circ  {\bar{\Delta}}^{n-1}$.\\

Now set $\mathbb{C}[X_i]$ as a ring of polynomials in one variable and by equipping it by grading structure $deg X_i = \alpha_i$ for any simple root $\alpha_i$, we can regard it as a $\Lambda-$ graded.\\
By this grading there will be a morphism of $\Lambda-$ graded associative algebras
\begin{equation}
\phi_i : U_{q}^{-}(g) \rightarrow \mathbb{C}[X_i]: F_j \mapsto \delta_{ij} x_i
\end{equation}
By following this construction for any sequence of simple roots ${\beta}_{i_{1}}, \cdots , {\beta}_{i_{k}},$ there will be a morphism of $\Lambda-$ graded associative algebras
\begin{equation}
({\phi}_{i_{1}} \otimes {\phi}_{i_{k}}) \circ {\bar{\Delta}}^k : U_{q}^{-}(g) \rightarrow \mathbb{C }[X_{1i_{1}}]  \bar{\otimes} \cdots \bar{\otimes} \mathbb{C}[X_{ki_{k}}]
\end{equation}
(the cause of double indexation here is the appearance of $i_j$s more than once in the sequence).\\
And finally, $\mathbb{C}[X_{1i_{1}}]  \bar{\otimes} \cdots \bar{\otimes} \mathbb{C}[X_{ki_{k}}]$ is an algebra of skew polynomials $ \mathbb{C}[X_{1i_{1}}, \cdots , X_{ki_{k}}], $ with $\Lambda-$ grading $X_{s i_{s}} X_{t i_{t}} = q^{<\alpha_{i_s} , \alpha_{i_t} >} X_{t i_{t}} X_{s i_{s}},$ for $s > t.$\\
But let us to simplify it as $X_i X_j = q^{<deg X_i , deg X_j >} X_j X_i$; the one that we will use it always.\\

So very briefly we constructed the already mentioned family of morphisms (Feigin's homomorphisms) from $U_{q}^{-}(g)$ (the maximal nilpotent sub-algebra of a quantum group associated to an arbitrary Kac-Moody algebra) to the algebra of skew polynomials.

\section{ The contribution between Quantum Serre relations and screening operators}

\begin{thm}\label{Thm1}
	Set ${Q}=q^2$ and points $x_1 , \cdots , x_n$ such that $x_i x_j = {Q} x_i x_j$ for $i<j.$ And set $\Sigma^x = x_1 + \cdots +x_n$.
	If ${Q}^N =1$ and $x_{i}^{N} =0$ for some natural number $N$, then we claim that ${(\Sigma^x)}^N = 0$
\end{thm}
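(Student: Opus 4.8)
The plan is to induct on the number of points $n$ and reduce the whole statement to the two-variable quantum binomial theorem together with the vanishing of Gaussian binomial coefficients at a root of unity. (I read the defining relation as $x_i x_j = Q\, x_j x_i$ for $i<j$, the standard quantum-plane commutation, since the displayed relation $x_i x_j = Q\, x_i x_j$ is presumably a typographical slip.) The two endpoint ingredients of the induction will be exactly the hypotheses $x_i^N=0$, while the interior terms will be annihilated purely by the arithmetic of $Q$.

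First I would record the two-variable quantum binomial theorem: if $u,v$ lie in an associative algebra and satisfy $uv = Q\, vu$, then
\[
(u+v)^N \;=\; \sum_{k=0}^{N} \binom{N}{k}_{Q}\, v^{k}\, u^{N-k},
\]
where $\binom{N}{k}_Q = \frac{[N]_Q!}{[k]_Q!\,[N-k]_Q!}$ is the Gaussian binomial coefficient and $[m]_Q = 1 + Q + \cdots + Q^{m-1}$. This is proven by a straightforward induction on $N$, the only inputs being the rule $v\,u^{j} = Q^{-j} u^{j} v$ (a consequence of $uv=Q vu$) and the $Q$-Pascal identity $\binom{N}{k}_Q = \binom{N-1}{k-1}_Q + Q^{k}\binom{N-1}{k}_Q$.

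Second I would establish that every interior coefficient vanishes, i.e. $\binom{N}{k}_Q = 0$ for $0 < k < N$. Using the product formula $\binom{N}{k}_Q = \prod_{j=1}^{k} \frac{1-Q^{N-j+1}}{1-Q^{j}}$, the numerator contains the factor $1 - Q^{N} = 0$, while the denominator $\prod_{j=1}^{k}(1-Q^{j})$ is a nonzero scalar because $Q^{j}\neq 1$ for $0<j\le k<N$. With this in hand the induction is immediate: for $n=1$ the claim is just $x_1^{N}=0$, and for the step I set $\Sigma' = x_1 + \cdots + x_{n-1}$ and note that $x_i x_n = Q\, x_n x_i$ for every $i<n$ forces $\Sigma'\, x_n = Q\, x_n\, \Sigma'$, so $u=\Sigma'$, $v=x_n$ satisfy the hypothesis of the binomial theorem. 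Expanding $(\Sigma^x)^N = (\Sigma' + x_n)^N = \sum_{k=0}^{N}\binom{N}{k}_Q x_n^{k}(\Sigma')^{N-k}$, the interior terms $0<k<N$ die by the coefficient vanishing, the $k=N$ term is $x_n^{N}=0$, and the $k=0$ term is $(\Sigma')^{N}=0$ by the induction hypothesis applied to $x_1,\dots,x_{n-1}$ (which obey the same relations). Hence $(\Sigma^x)^N=0$.

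The step I expect to be the main obstacle is the second one: the vanishing of the interior coefficients is the whole engine of the argument, and it genuinely requires $Q$ to be a \emph{primitive} $N$-th root of unity rather than merely some $N$-th root. If one only assumes $Q^N=1$ this can fail, the cleanest counterexample being $Q=1$, $n=2$, $N=2$, where $(x_1+x_2)^2 = 2x_1x_2 \neq 0$ in general. I would therefore either strengthen the hypothesis to primitivity or, equivalently, make explicit that the denominator $\prod_{j=1}^{k}(1-Q^{j})$ must remain invertible for all $0<k<N$; everything else in the proof is routine induction once the binomial theorem is in place.
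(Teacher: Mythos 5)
The paper contains no proof for you to be measured against: its entire argument for this theorem is the sentence ``It's straightforward, just needs to use q-calculation.'' Your proposal supplies exactly the calculation that sentence gestures at, and it is correct. Reading the relation as $x_i x_j = Q\,x_j x_i$ for $i<j$ is the right repair of the misprint (taken literally, $x_i x_j = Q\,x_i x_j$ forces $(1-Q)x_i x_j = 0$), and the induction on $n$ via the two-variable identity $(u+v)^N = \sum_{k=0}^{N} {N \choose k}_Q\, v^k u^{N-k}$ for $uv = Qvu$, combined with the vanishing of the interior Gaussian coefficients, is the standard and presumably intended ``q-calculation.'' More valuable than the proof itself is your correction of the statement: with only $Q^N = 1$ assumed, the theorem is false, and your counterexample ($Q=1$, $N=2$, $(x_1+x_2)^2 = 2x_1x_2 \neq 0$) is decisive. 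More generally, if $Q$ has order $d < N$ with $d \mid N$, then ${N \choose k}_Q$ reduces to the ordinary binomial ${N/d \choose k/d} \neq 0$ whenever $d \mid k$, so interior terms genuinely survive; primitivity of $Q$ as an $N$-th root of unity is exactly the needed hypothesis. It is surely what the author meant, since it is the standing convention in the root-of-unity screening-operator setting the paper works in, but the paper neither states it nor carries out the computation that would have exposed its necessity.
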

\begin{proof}
	It's straightforward, just needs to use q-calculation.
\end{proof}

\subsection{sl(3) case}
As we know,  $M_2= \left[ \begin{array}{cc} 2 & -1 \\ -1 & 2 \end{array} \right] $ is the generalized Cartan matrix for $sl(3)$. Set $M_{q_2}= \left[ \begin{array}{cc} q^2 & q^{-1} \\ q^{-1} & q^2 \end{array} \right] $ and call it Cartan type matrix related to $M_2$.
\begin{thm}\label{Thm2}
	Suppose we have two different types of points $x_i$,  Namely, set
	$(x_{2i-1})_i$, that we will call them of type 1 and $(x_{2i})_i$, that we will call them of type 2 for $i \in I= \{1,2 \}$, and the following $q-$ commutative relations:
	\[  \left\{
	\begin{array}{l l}
	x_{j} x_{j'} = q^2 x_{j'} x_{j}  & \quad \text{if $j < j' ~ and ~ j , j' \in  \{ 1 , 3 \} ~ and ~ j=j'$ }\\
	x_{i} x_{i'} = q^2 x_{i'} x_{i} & \quad \text{if $i < i' ~ and ~ i , i' \in  \{ 2 , 4 \} ~ and ~ i=i'$ }\\
	x_{i} x_{j} = q^{-1} x_{j} x_{i}   & \quad \text{if $ i < j $ }\\
	\end{array} \right.\]
	
	Set $\Sigma_{1}^{x} = \Sigma_{i \in I} x_{2i+1}$ and $\Sigma_{2}^{x} = \Sigma_{i \in I} x_{2i}$. We will call them screening operators.\\
	Then we claim that $\Sigma_{1}^{x} $ and $\Sigma_{2}^{x}$ are satisfying on quantum Serre relations:
	\begin{equation}
	{(\Sigma_{1}^{x})}^{2} \Sigma_{2}^{x} - [2]_q \Sigma_{1}^{x} \Sigma_{2}^{x} \Sigma_{1}^{x} + \Sigma_{2}^{x} {(\Sigma_{1}^{x})}^{2} =0
	\end{equation}
	$$ {(\Sigma_{2}^{x})}^{2} \Sigma_{1}^{x} - [2]_q \Sigma_{2}^{x} \Sigma_{1}^{x} \Sigma_{2}^{x} + \Sigma_{1}^{x} {(\Sigma_{2}^{x})}^{2} =0 $$
	
\end{thm}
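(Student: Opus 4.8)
The plan is to leverage the Feigin-homomorphism machinery assembled above and transport the quantum Serre relations from $U_q^{-}(sl_3)$ across an explicit map. First I would read the screening operators as $\Sigma_1^x = x_1 + x_3$ (the sum of the type-$1$ points) and $\Sigma_2^x = x_2 + x_4$ (the type-$2$ points), and realize them as images of the Chevalley generators $F_1, F_2$ under a map $(\phi_{i_1}\otimes\cdots\otimes\phi_{i_4})\circ\bar\Delta^4$. Taking the sequence of simple roots to be $(\alpha_1,\alpha_2,\alpha_1,\alpha_2)$, the iterated coproduct gives $\bar\Delta^4(F_1)$ as the sum of the four tensors carrying a single $F_1$-leg; applying $\phi_1\otimes\phi_2\otimes\phi_1\otimes\phi_2$ and using $\phi_i(F_j)=\delta_{ij}x_i$, only the first and third legs survive, so $F_1\mapsto x_1+x_3=\Sigma_1^x$ and likewise $F_2\mapsto x_2+x_4=\Sigma_2^x$. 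The skew-polynomial grading $X_sX_t = q^{\langle\alpha_{i_s},\alpha_{i_t}\rangle}X_tX_s$ then specializes, via $\langle\alpha_i,\alpha_i\rangle=2$ and $\langle\alpha_1,\alpha_2\rangle=-1$, to exactly the three families of $q$-commutation relations posited in the statement. Since the composite is a homomorphism of associative algebras and $F_1,F_2$ satisfy both quantum Serre relations inside $U_q^{-}(sl_3)$, their images $\Sigma_1^x,\Sigma_2^x$ satisfy them too, which is the claim.

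To make this self-contained I would also verify the first relation by the direct $q$-calculation, which doubles as a check that the dictionary above is correct. The expression $(\Sigma_1^x)^2\Sigma_2^x - [2]_q\Sigma_1^x\Sigma_2^x\Sigma_1^x + \Sigma_2^x(\Sigma_1^x)^2$ is linear in the type-$2$ slot and bilinear in the two type-$1$ slots, so expanding $\Sigma_1^x=x_1+x_3$ and $\Sigma_2^x=x_2+x_4$ reduces it to a sum $\sum_{a,b\in\{x_1,x_3\}}\sum_{c\in\{x_2,x_4\}} F(a,b;c)$, where $F(a,b;c)=abc - [2]_q acb + cab$.

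For the diagonal contributions $a=b$, a one-line reduction using the cross-type relation $ac=q^{\pm1}ca$ turns each into $\bigl(\lambda^2 - [2]_q\lambda + 1\bigr)\,ca^2$ with $\lambda=q^{\pm1}$; since $\lambda^2-(q+q^{-1})\lambda+1=(\lambda-q)(\lambda-q^{-1})$ vanishes at $\lambda=q^{\pm1}$, every diagonal term is zero. The remaining off-diagonal terms use in addition the same-type relation $x_1x_3=q^2x_3x_1$; normal-ordering each monomial to $x_1x_2x_3$ (resp.\ $x_1x_3x_4$) one finds $F(x_1,x_3;c)+F(x_3,x_1;c)=0$ for each $c$, the two summands cancelling for $c=x_2$ and vanishing separately for $c=x_4$. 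Summing over all triples yields the first Serre relation, and the second follows by the identical argument with the two types interchanged (equivalently, directly from the homomorphism, since $F_2,F_1$ also satisfy their Serre relation in $U_q^{-}(sl_3)$).

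The main obstacle is bookkeeping rather than conceptual: one must pin down the precise $q$-powers produced when normal-ordering each mixed monomial and confirm that the telescoping of coefficients really produces the factor $(\lambda-q)(\lambda-q^{-1})$ on the diagonal and an exact cancellation off the diagonal. I expect the only genuinely delicate point to be matching orientation conventions --- the sign of the exponent in $X_sX_t=q^{\langle\alpha_{i_s},\alpha_{i_t}\rangle}X_tX_s$ against the relations as written --- but because the Serre polynomial is invariant under $q\mapsto q^{-1}$, this does not affect the vanishing.
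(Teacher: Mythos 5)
Your proposal is correct, and it supplies strictly more than the paper does: the paper's entire proof of this theorem is the sentence ``It's straightforward, just needs to use q-calculation,'' so your direct expansion is exactly the computation the paper omits, while your Feigin-homomorphism argument is a genuinely different, structural route. I checked the computational core and it holds: writing the Serre expression as $\sum_{a,b,c}F(a,b;c)$ with $F(a,b;c)=abc-[2]_q\,acb+cab$, the diagonal terms $a=b$ reduce to $(\lambda-q)(\lambda-q^{-1})\,ca^2$ with $\lambda=q^{\pm1}$, hence vanish; for the off-diagonal terms, $F(x_1,x_3;x_4)$ and $F(x_3,x_1;x_4)$ each reduce to the coefficient $q^{-2}-(q+q^{-1})q^{-1}+1=0$, while $F(x_1,x_3;x_2)=(q^2-1)\,x_2x_3x_1$ and $F(x_3,x_1;x_2)=(1-q^2)\,x_2x_3x_1$ cancel in pairs --- precisely the pattern you describe, and the symmetric relation follows identically with the two types interchanged. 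The homomorphism argument is also sound within the paper's own framework: it exhibits the theorem as the $(\alpha_1,\alpha_2,\alpha_1,\alpha_2)$ instance of the composite map (1.4), $(\phi_{i_1}\otimes\cdots\otimes\phi_{i_4})\circ\bar{\Delta}^4$, with Proposition 1.1 used as a black box, so that $F_1\mapsto x_1+x_3$ and $F_2\mapsto x_2+x_4$ transport the Serre relations from $U_q^-(sl_3)$; this is the explanation the paper's introduction gestures at but never actually deploys here, and it scales immediately to $n$ points of each type, which is the content of the paper's next theorem (proved there instead by an inductive embedding argument). Two caveats you already handled correctly: the statement's $\Sigma_1^x=\sum_{i\in I}x_{2i+1}$ is evidently a typo for $\sum_{i\in I}x_{2i-1}=x_1+x_3$, which you silently fixed; and the paper's skew-polynomial convention $X_sX_t=q^{\langle\alpha_{i_s},\alpha_{i_t}\rangle}X_tX_s$ for $s>t$ produces the theorem's relations only after $q\mapsto q^{-1}$, a discrepancy you rightly dismiss as harmless because the Serre polynomial (and in particular $[2]_q=q+q^{-1}$) is invariant under that substitution.
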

\begin{proof}
	It's straightforward, just needs to use q-calculation.
\end{proof}
\begin{thm}\label{Thm3}
	Prove Theorem \ref{Thm2} in a general case, i.e. Set points $X_i \in \{ X_1, \cdots , X_n \}$ and $Y_i \in \{ Y_1, \cdots , Y_n \}$  with the following relations;
	\[  \left\{
	\begin{array}{l l}
	X_{i}  X_{j} = q^2  X_{j}  X_{i} & \quad \text{if $i < j$ }\\
	Y_{i}  Y_{j} = q^2  Y_{j}  Y_{i} & \quad \text{if $i < j$ }\\
	X_{i} Y_{j} = q^{-1} Y_{j} X_{i}  & \quad \text{if $i < j$ }\\
	\end{array} \right.\]
	and the screening operators $\Sigma_{1}^{x} = \Sigma_{i=1}^{k} X_i$ and $\Sigma_{1}^{y}=\Sigma_{j=1}^{k} Y_j$.\\
	We claim that
	$\Sigma_{1}^{x}$ and $\Sigma_{1}^{y}$  are satisfying  in quantum Serre relations.\\
\end{thm}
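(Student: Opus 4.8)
The plan is to recognize the two screening operators as the images of the Chevalley generators of $U_q^{-}(sl_3)$ under a Feigin homomorphism, so that the quantum Serre relations descend from the relations already holding upstairs. Concretely, arrange the $2k$ points in the interleaved order the sl(3) construction dictates (the type-$1$ points in the odd slots, the type-$2$ points in the even slots), so that the skew-polynomial algebra $\mathbb{C}[X_{1 i_1}, \ldots , X_{2k\, i_{2k}}]$ carries exactly the grading $X_s X_t = q^{\langle \alpha_{i_s}, \alpha_{i_t}\rangle} X_t X_s$ for $s > t$, with $\langle \alpha_1, \alpha_1\rangle = \langle \alpha_2, \alpha_2\rangle = 2$ and $\langle \alpha_1, \alpha_2\rangle = -1$. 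Writing $\Phi = (\phi_{i_1}\otimes\cdots\otimes\phi_{i_{2k}})\circ \bar\Delta^{2k}$, one computes $\Phi(F_1) = \sum_{s:\, i_s = 1} x_s = \Sigma_1^{x}$ and $\Phi(F_2) = \Sigma_1^{y}$, because $\bar\Delta^{2k}(F_\ell)$ is the sum of the elementary tensors carrying $F_\ell$ in a single slot and $\phi_{i_s}(F_\ell) = \delta_{i_s \ell}\, x_s$. Since $\Phi$ is a homomorphism of associative algebras (by the homomorphism property of $\bar\Delta$ established above, together with the functoriality of the iterated $\bar\Delta^n$) and $F_1, F_2$ satisfy $F_1^2 F_2 - [2]_q F_1 F_2 F_1 + F_2 F_1^2 = 0$ in $U_q^{-}(sl_3)$, applying $\Phi$ yields the first Serre relation for $\Sigma_1^{x}, \Sigma_1^{y}$ at once; the second relation is the image of the companion Serre relation with the roles of $F_1$ and $F_2$ exchanged.

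If instead one wants the self-contained $q$-calculation in the spirit of Theorems 1.2.1 and 1.2.2, I would first complete the commutation data: the statement records only $X_i Y_j = q^{-1} Y_j X_i$ for $i < j$, but consistency with the sl(3) example forces $X_i Y_j = q\, Y_j X_i$ for $i > j$ (and $q^{-1}$ for $i = j$), which is precisely what the interleaved master ordering produces. Then I would expand $S = (\Sigma_1^{x})^2 \Sigma_1^{y} - [2]_q \Sigma_1^{x} \Sigma_1^{y} \Sigma_1^{x} + \Sigma_1^{y} (\Sigma_1^{x})^2 = \sum_{a,b,c}\big(X_a X_b Y_c - [2]_q X_a Y_c X_b + Y_c X_a X_b\big)$ and group the monomials by their unordered content $\{X_a, X_b, Y_c\}$. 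The diagonal part $a = b$ vanishes term by term: for a single pair $(X_a, Y_c)$ the identity $X^2 Y - [2]_q XYX + YX^2 = 0$ holds whenever $XY = q^{\pm 1} YX$, and both signs occur according to $a \le c$ or $a > c$. For the off-diagonal part $a \ne b$ one normal-orders the six monomials arising from the two internal orders of $X_a, X_b$ to the common form $Y_c X_{\min(a,b)} X_{\max(a,b)}$ and checks that the coefficients telescope to zero; this is the computation I sketched for the representative regime $c \ge b > a$, where the total coefficient collapses through $(q+q^{-1})q^{-1} = 1 + q^{-2}$. The second Serre relation then follows from the first by the involution $X \leftrightarrow Y$, under which the completed relation set is invariant.

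The main obstacle is not any single computation — each sub-case reduces to the one quantum-integer identity $[2]_q = q + q^{-1}$ — but the bookkeeping that precedes it: pinning down the full set of commutation relations (the statement as written is underspecified for $i \ge j$), and then organizing the off-diagonal analysis into the position regimes of $c$ relative to the pair $\{a, b\}$ so that no monomial is miscounted. The homomorphism argument avoids this bookkeeping entirely, at the cost of verifying once and for all that the given skew-polynomial relations are exactly the image grading of the sl(3) Feigin homomorphism; I would therefore present the homomorphism proof as the conceptual statement and relegate the explicit $q$-calculation to a remark confirming it directly.
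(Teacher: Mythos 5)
Your primary (homomorphism) argument is correct within the paper's own framework, but it is a genuinely different route from the paper's proof. The paper argues by induction on $k$: the base case $k=2$ is Theorem 1.2, and the inductive step adjoins one variable at a time, forming $U_q(n)\,\bar{\otimes}\,\mathbb{C}_q[X]$ with $\Sigma_1^x X = q^2 X \Sigma_1^x$, $\Sigma_1^y X = q^{-1} X \Sigma_1^y$, and verifying by explicit expansion (the paper's Claims 1 and 2) that $\Sigma_1^x + X,\ \Sigma_1^y$, and then ${\Sigma_1^x}',\ {\Sigma_1^y}' + Y$, still satisfy the Serre relations. That inductive step is exactly a hand-made, self-contained proof of the one-slot case of the coproduct property you invoke abstractly; your version does all $2k$ slots at once by citing Proposition 1.1 and the morphism property of $(\phi_{i_1}\otimes\cdots\otimes\phi_{i_{2k}})\circ\bar\Delta^{2k}$. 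What your route buys is brevity, both Serre relations simultaneously, and the right conceptual framing (the theorem is precisely the well-definedness of Feigin's map on $F_1,F_2$). What it costs is self-containedness: Proposition 1.1 is only cited in the paper, never proved there, and since the existence of a homomorphism out of $U_q^{-}(sl_3)$ sending $F_\ell$ to $\Sigma_\ell$ is logically equivalent to the statement being proved, the paper evidently intends Theorems 1.2--1.3 as the hands-on verification; your argument shifts that entire burden onto the reference. Two smaller points: your completion of the relation set for $i\ge j$ (diagonal $X_iY_i = q^{-1}Y_iX_i$, and $X_iY_j = q\,Y_jX_i$ for $i>j$) agrees exactly with what the paper's inductive computations implicitly use, which is good; but the paper's grading convention $X_sX_t = q^{\langle\alpha_{i_s},\alpha_{i_t}\rangle}X_tX_s$ for $s>t$ actually reproduces the theorem's relations with $q$ replaced by $q^{-1}$, not ``exactly'' as you assert --- harmless, because the Serre relations and $[2]_q$ are invariant under $q\leftrightarrow q^{-1}$, but it should be said.

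There is one genuine error, in your fallback computational route: the second Serre relation does not follow from the first ``by the involution $X\leftrightarrow Y$, under which the completed relation set is invariant.'' The completed set is not invariant under the naive swap: it contains the diagonal relation $X_iY_i = q^{-1}Y_iX_i$, which the swap converts into $X_iY_i = q\,Y_iX_i$. The symmetry that actually works is $X_i\mapsto Y_{k+1-i}$, $Y_i\mapsto X_{k+1-i}$ combined with $q\mapsto q^{-1}$ (which fixes the Serre relations), or you can simply note --- as your homomorphism route already does --- that the second relation is the image under $\Phi$ of the companion Serre relation of $U_q^{-}(sl_3)$, so no symmetry of the skew-polynomial algebra is needed. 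The rest of your direct expansion is sound: the diagonal identity $X^2Y - [2]_qXYX + YX^2 = 0$ whenever $XY = q^{\pm1}YX$, and the vanishing of the off-diagonal regimes through $(q+q^{-1})q^{-1} = 1+q^{-2}$, both check out.
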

\begin{proof}
	Proof by induction on $k$.\\
	As we see in theorem \ref{Thm2}, it's true for $k=2$. \\
	Suppose that is true for $k=n$, we will prove that it's true for $k=n+1$.\\
	As we set it out, $\mathfrak{n}$ is a nilpotent Lie algebra, so the Cartan sub-algebra of $\mathfrak{n}$ is equal to $\mathfrak{n}$ with Chevally generators $\Sigma_{1}^{X}$ and $\Sigma_{1}^{y}$ as they are satisfying in quantum Serre relations.\\
	So we can define $U_q(n) := < \Sigma_{1}^{x} , \Sigma_{1}^{y} ~ ~ | ~ {(\Sigma_{1}^{x})}^{2} \Sigma_{1}^{y}  - (q + q^{-1}) \Sigma_{1}^{x} \Sigma_{1}^{y} \Sigma_{1}^{x} + \Sigma_{1}^{y} {(\Sigma_{1}^{x})}^{2}=0 >$ .\\
	Let  $ \mathbb{C}_q[X]$ be the quantum polynomial ring in one variable. We define:\\
	$U_q(n) \bar{\otimes} \mathbb{C}_q[X] := < \Sigma_{1}^{x} , \Sigma_{1}^{y} , X ~ ~ | ~ {(\Sigma_{1}^{x})}^{2} \Sigma_{1}^{y}  - (q + q^{-1}) \Sigma_{1}^{x} \Sigma_{1}^{y} \Sigma_{1}^{x} + \Sigma_{1}^{y} {(\Sigma_{1}^{x})}^{2}=0 , \Sigma_{1}^{x} X = q^2 X \Sigma_{1}^{x} , \Sigma_{1}^{y} X = q^{-1} X \Sigma_{1}^{y} > $.\\
	Here $\bar{\otimes}$ means quantum twisted tensor product.\\
	We define the embedding $U_q(n) \rightarrow U_q(n) \bar{\otimes} \mathbb{C}_q[X] $: $\Sigma_{1}^{X} \mapsto \Sigma_{1}^{x} + X$ ; $\Sigma_{1}^{y} \mapsto \Sigma_{1}^{y}$. \\
	Claim 1:\\
	$(\Sigma_{1}^{x} + X ) ~ and ~ \Sigma_{1}^{y}$ are satisfying on quantum Serre relations.\\
	
	~  ~  ~ \ ~ proof of claim 1:\\

	$(\Sigma_{1}^{x} + X )^2 \Sigma_{1}^{y} - (q + q^{-1})(\Sigma_{1}^{x} + X) \Sigma_{1}^{y} (\Sigma_{1}^{x} + X) + \Sigma_{1}^{y} (\Sigma_{1}^{x} + X)^2 = {(\Sigma_{1}^{x})}^{2} \Sigma_{1}^{y} + \Sigma_{1}^{x} X \Sigma_{1}^{y} + X \Sigma_{1}^{x} \Sigma_{1}^{y} + X^2 \Sigma_{1}^{y} - (q+ q^{-1}) ( \Sigma_{1}^{x} \Sigma_{1}^{y} \Sigma_{1}^{x} + X \Sigma_{1}^{y} \Sigma_{1}^{x} + \Sigma_{1}^{x} \Sigma_{1}^{y} X + X \Sigma_{1}^{y} X ) + \Sigma_{1}^{y} {(\Sigma_{1}^{x})}^{2} + \Sigma_{1}^{y} \Sigma_{1}^{x} X + \Sigma_{1}^{y} X \Sigma_{1}^{x} + \Sigma_{1}^{y} X^2 = {(\Sigma_{1}^{x})}^{2} \Sigma_{1}^{y} - (q+q^{-1}) \Sigma_{1}^{x} \Sigma_{1}^{y} \Sigma_{1}^{x} + \Sigma_{1}^{y} {(\Sigma_{1}^{x})}^{2} + (q^2 X \Sigma_{1}^{x} \Sigma_{1}^{y} + X \Sigma_{1}^{x} \Sigma_{1}^{y} + X^2 \Sigma_{1}^{y} - (q+ q^{-1}) X \Sigma_{1}^{y} \Sigma_{1}^{x} - (q + q^{-1}) q X \Sigma_{1}^{x} \Sigma_{1}^{y} - (q + q^{-1}) q^{-1} X^2 \Sigma_{1}^{y} + q X \Sigma_{1}^{y} \Sigma_{1}^{x} + q^{-1} X \Sigma_{1}^{y} \Sigma_{1}^{x} + q^{-2} X^2 \Sigma_{1}^{y}= 0 + 0 = 0$.\\
	So it's well defined.\\
	Now set $X = X_{n+1}$.\\
	We will have the new operators ${\Sigma_{1}^{x}}' = X_1 + \cdots + X_n + X_{n+1}$ and ${\Sigma_{1}^{y}}' = Y_1 + \cdots + Y_n $.\\
	Now define:\\
	$$U_q(n) \rightarrow U_q(n) \bar{\otimes} \mathbb{C}_q[X] \hookrightarrow  U_q(n) \bar{\otimes} \mathbb{C}_q[X] \bar{\otimes} \mathbb{C}_q[Y]   $$ such that\\
	$$\Sigma_{1}^{x} \longmapsto \Sigma_{1}^{x} + X \longmapsto {\Sigma_{1}^{x}}'$$
	$${\Sigma_{1}^{y}} \longmapsto {\Sigma_{1}^{y}}  \longmapsto {\Sigma_{1}^{y}}' + Y$$
	Notice that $\mathbb{C}_q[X] \bar{\otimes} \mathbb{C}_q[Y] \cong \mathbb{C}<X,Y | XY = q^{-1} YX>$.\\
	And Define:\\
	$U_q(n) \bar{\otimes} \mathbb{C}_q[X,Y] := < \Sigma_{1}^{x} , {\Sigma_{1}^{y}} , X , Y ~~| ~~ \Sigma_{1}^{x} ~ and ~ {\Sigma_{1}^{y}} ~ stisfying ~ q-Serre ~ relations ~ and ~ \Sigma_{1}^{x} X = q^2 X \Sigma_{1}^{x} , \Sigma_{1}^{y} X = q^{-1} X \Sigma_{1}^{y} , \Sigma_{1}^{x} Y = q^{-1} Y \Sigma_{1}^{x}, {\Sigma_{1}^{y}} Y = q^2 Y {\Sigma_{1}^{y}}, XY = q^{-1} Y X >.$\\
	
	Claim 2:\\
	$ {\Sigma_{1}^{x}}' ~ and ~ ({\Sigma_{1}^{y}}' + Y )$ are satisfying on quantum Serre relations.\\

	~  ~  ~ \ ~ proof of claim 2:\\
	${({\Sigma_{1}^{x}}')}^2({\Sigma_{1}^{y}}' +Y) -(q + q^{-1}) {\Sigma_{1}^{x}}' ( {\Sigma_{1}^{y}}' +Y) {\Sigma_{1}^{x}}' + ({\Sigma_{1}^{y}}' + Y ) {({\Sigma_{1}^{x}}')}^2 = {({\Sigma_{1}^{x}}')}^2 {\Sigma_{1}^{y}}' + {({\Sigma_{1}^{x}}')}^2 Y - (q + q^{-1}) ( {\Sigma_{1}^{x}}' {\Sigma_{1}^{y}}' {\Sigma_{1}^{x}}' + {\Sigma_{1}^{x}}' Y {\Sigma_{1}^{x}}' ) + {\Sigma_{1}^{y}}' {({\Sigma_{1}^{x}}')}^2 + Y {({\Sigma_{1}^{x}}')}^2 = {({\Sigma_{1}^{x}}')}^2 {\Sigma_{1}^{y}}' - (q + q^{-1}) {\Sigma_{1}^{x}}' {\Sigma_{1}^{y}}' {\Sigma_{1}^{x}}' + {\Sigma_{1}^{y}}' {({\Sigma_{1}^{x}}')}^2 + {({\Sigma_{1}^{x}}')}^2 Y  + {\Sigma_{1}^{x}}' Y = 0 + q^{-2} Y {({\Sigma_{1}^{x}}')}^2 - (q + q^{-1}) q^{-1} Y {({\Sigma_{1}^{x}}')}^2 + Y {({\Sigma_{1}^{x}}')}^2 = 0 + 0 =0  $.\\
	lets do some part of this computation that maybe make confusion:\\
	$ ({\Sigma_{1}^{x}}')^2 Y = ( {\Sigma_{1}^{x}} + X_{n+1} )^2 Y = (\Sigma_{1}^{x})^2 Y + X_{n+1}^{2} Y+ {\Sigma_{1}^{x}} X_{n+1} Y + X_{n+1} \Sigma_{1}^{x} Y + q^{-2} Y ({\Sigma_{1}^{x}})^{2} + q^{-2} Y X_{n+1}^{2} + q^{-2} Y \Sigma_{1}^{x} X_{n+1} + q^{-2} Y X_{n+1} \Sigma_{1}^{x} = q^{-2} ( Y ( ({\Sigma_{1}^{x}}^{2} +  X_{n+1}^{2} + \Sigma_{1}^{x} X_{n+1} + X_{n+1} \Sigma_{1}^{x} )) = q^{-2} Y {({\Sigma_{1}^{x}}')}^2 . $\\
	And ${\Sigma_{1}^{x}}' Y = ( {\Sigma_{1}^{x}} + X_{n+1} ) Y = {\Sigma_{1}^{x}} Y + X_{n+1} Y = q^{-1} Y {\Sigma_{1}^{x}} + q^{-1} Y X_{n+1} = q^{-1} (Y ( {\Sigma_{1}^{x}} + X_{n+1} )) = q^{-1} Y {\Sigma_{1}^{x}}'$. And by substituting these, we have the result.\\
	So our definition is well defined.\\
	Now set $Y = Y_{n+1}$ and we are done.

\end{proof}
\subsection{ affinized Lie algebra $\hat{sl(2)}$}
As we know,  $\hat{M_2}= \left[ \begin{array}{cc} 2 & -2 \\ -2 & 2 \end{array} \right] $ is the generalized Cartan matrix for $\hat{sl(2)}$. Set $\hat{M_{q_2}}= \left[ \begin{array}{cc} q^2 & q^{-2} \\ q^{-2} & q^2 \end{array} \right] $ and call it Cartan type matrix related to $\hat{M_2}$.\\
$\hat{sl(2)}$ is satisfying in Theorems \ref{Thm2} and \ref{Thm3} as well; but what we need is just  to change the quantum Serre relations in the following case:\\
\begin{equation}\label{Equ1}
{(\Sigma_{1}^{x})}^{3} \Sigma_{1}^{y} - (q^2 + 1 + q^{-2} ) {(\Sigma_{1}^{x})}^{2} \Sigma_{1}^{y} \Sigma_{1}^{x} + ( q^2 + 1 + q^{-2} ) \Sigma_{1}^{x} \Sigma_{1}^{y} {(\Sigma_{1}^{x})}^{2} - \Sigma_{1}^{y} {(\Sigma_{1}^{x})}^{3} = 0
\end{equation}
$${(\Sigma_{1}^{y})}^{3} \Sigma_{1}^{x} - (q^2 + 1 + q^{-2} ) {(\Sigma_{1}^{y})}^{2} \Sigma_{1}^{x} \Sigma_{1}^{y} + ( q^2 + 1 + q^{-2} ) \Sigma_{1}^{y} \Sigma_{1}^{x} {(\Sigma_{1}^{y})}^{2} - \Sigma_{1}^{x} {(\Sigma_{1}^{y})}^{3} = 0$$
And to change the $q-$ commutation relations also; according to our new Cartan type matrix\\
\[  \left\{
\begin{array}{l l}
X_{i}  X_{j} = q^2  X_{j}  X_{i} & \quad \text{if $i < j$ }\\
Y_{i}  Y_{j} = q^2  Y_{j}  Y_{i} & \quad \text{if $i < j$ }\\
X_{i} Y_{j} = q^{-2} Y_{j} X_{i}  & \quad \text{if $i < j$ }
\end{array} \right.\]

But lets try to prove it in the case of Laurent skew $q-$polynomials $\mathbb{C}[X,X^{-1}].$
\begin{thm}
	Set points $X_i \in \{ X_1, \cdots , X_k \}$ and $X_{j}^{-1} \in \{ X_{1}^{-1}, \cdots , X_{k}^{-1} \}$  with the following relations;
	\[  \left\{
	\begin{array}{l l}
	X_{i} X_{j}  = q^{2}  X_{j} X_{i}  & \quad \text{if $i < j$ }\\
	X_{i} X_{j}^{-1} = q^{-2} X_{j}^{-1} X_{i}  & \quad \text{if $i < j$ }
	\end{array} \right.\]
	and the screening operators $\Sigma_{1}^{x} = \Sigma_{i=1}^{k} X_{i}$ and $\Sigma_{1}^{x^{-1}} = \Sigma_{j=1}^{k}X_{j}^{-1}$.\\
	Again we claim that
	$\Sigma_{1}^{x}$ and $\Sigma_{1}^{x^{-1}}$  are satisfying  in quantum Serre relations (\ref{Equ1}).
	
\end{thm}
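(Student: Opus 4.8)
The plan is to reproduce the inductive scheme of the preceding general theorem, adjusting it for the affinization in two ways: the quadratic Serre relation is replaced by the cubic relation (1.11), and the ordinary skew polynomial ring is replaced by the Laurent skew ring $\mathbb{C}[X,X^{-1}]$. Throughout write $a=q^{2}+1+q^{-2}$ for the coefficient appearing in (1.11), and induct on $k$. For the base case $k=1$ one has $\Sigma_{1}^{x}=X_{1}$, $\Sigma_{1}^{x^{-1}}=X_{1}^{-1}$ with $X_{1}X_{1}^{-1}=X_{1}^{-1}X_{1}=1$; the two operators are mutually inverse, hence commute, every monomial in (1.11) collapses to $X_{1}^{2}$, and the coefficient pattern $1,-a,a,-1$ sums to $0$, so (1.11) holds.

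For the inductive step I assume (1.11) for $\Sigma_{1}^{x}=\sum_{i=1}^{n}X_{i}$ and $\Sigma_{1}^{x^{-1}}=\sum_{j=1}^{n}X_{j}^{-1}$, and adjoin a single Laurent ring $\mathbb{C}[X,X^{-1}]$ with $X=X_{n+1}$ via the $q$-twisted tensor product $U_{q}(\hat{n})\,\bar{\otimes}\,\mathbb{C}[X,X^{-1}]$. The grading forced by the affine Cartan type matrix $\hat{M_{q_{2}}}$ gives the clean commutations $\Sigma_{1}^{x}X=q^{2}X\Sigma_{1}^{x}$ and $\Sigma_{1}^{x^{-1}}X=q^{-2}X\Sigma_{1}^{x^{-1}}$ (no constants arise, since $X_{n+1}$ and the $X_{j}^{-1}$ with $j\le n$ carry distinct indices). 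I realize the enlarged pair by the embedding $\Sigma_{1}^{x}\mapsto\Sigma_{1}^{x}+X$, $\Sigma_{1}^{x^{-1}}\mapsto\Sigma_{1}^{x^{-1}}+X^{-1}$, so that after specializing $X=X_{n+1}$ the images are exactly ${\Sigma_{1}^{x}}'=\sum_{i=1}^{n+1}X_{i}$ and ${\Sigma_{1}^{x^{-1}}}'=\sum_{j=1}^{n+1}X_{j}^{-1}$, and I verify (1.11) in two stages as before.

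In \emph{Claim 1} I check that $(\Sigma_{1}^{x}+X)$ and $\Sigma_{1}^{x^{-1}}$ satisfy (1.11): expanding the cubic expression and pushing every $X$ to the left by the two clean commutations, the degree-$0$ part in $X$ is precisely the Serre expression of $\Sigma_{1}^{x},\Sigma_{1}^{x^{-1}}$, which vanishes by the inductive hypothesis, while the parts of degree $1,2,3$ in $X$ vanish term by term through the $q$-identities $a\,q^{2}=q^{4}+q^{2}+1$ and $1-a\,q^{-2}+a\,q^{-4}-q^{-6}=0$ (and the analogous one in degree $2$). In \emph{Claim 2} I then check that ${\Sigma_{1}^{x}}'$ and ${\Sigma_{1}^{x^{-1}}}'={\Sigma_{1}^{x^{-1}}}+X^{-1}$ satisfy (1.11); since (1.11) is linear in its second argument this splits as the Serre expression of $({\Sigma_{1}^{x}}',\Sigma_{1}^{x^{-1}})$, which is $0$ by Claim 1, plus the Serre expression of $({\Sigma_{1}^{x}}',X^{-1})$. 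Here the relation is no longer a clean $q$-commutation but the affine one ${\Sigma_{1}^{x}}'X^{-1}=q^{-2}X^{-1}{\Sigma_{1}^{x}}'+(1-q^{-2})$, the constant coming from $X_{n+1}X_{n+1}^{-1}=1$. Normal-ordering the four cubic monomials produces, besides the leading terms that cancel by the same degree-$3$ identity as in Claim 1, correction terms proportional to $(1-q^{-2})\,{({\Sigma_{1}^{x}}')}^{2}$ whose coefficients combine as $a\,[\,q^{-2}-(q^{-2}+1)+1\,]=0$; hence they cancel and (1.11) holds for the enlarged pair. The second relation of (1.11), with $\Sigma_{1}^{x}$ and $\Sigma_{1}^{x^{-1}}$ interchanged, follows verbatim because the defining relations are symmetric under $X_{i}\leftrightarrow X_{i}^{-1}$. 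Setting $X=X_{n+1}$ closes the induction.

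The hard part is exactly the inhomogeneous term $1-q^{-2}$ in Claim 2: in the finite case of the preceding theorem the two families $X_{i}$ and $Y_{j}$ were independent, so adjoining $Y_{n+1}$ created only homogeneous $q$-commutations, whereas here $X_{n+1}^{-1}$ is a genuine inverse and $X_{n+1}X_{n+1}^{-1}=1$ injects constants into every reordering; showing that these constants telescope against the Serre coefficients $1,-a,a,-1$ is the crux. Conceptually the vanishing is forced by the fact that $\Sigma_{1}^{x}$ and $\Sigma_{1}^{x^{-1}}$ are the images of the Chevalley generators of $U_{q}^{-}(\hat{sl(2)})$ under the composite of the iterated twisted coproduct $\bar{\Delta}^{k}$ with the Feigin evaluations $\phi_{i}$, all of which are algebra homomorphisms; since those generators obey the affine Serre relations, so must their images, which is precisely (1.11). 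I would use this homomorphism picture to organize and cross-check the explicit $q$-calculation.
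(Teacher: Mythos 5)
Your proposal is correct, but it establishes the theorem by a different mechanism than the paper does. The paper's own proof is also an induction on $k$ (with base case $k=2$ asserted rather than your trivial $k=1$), but its inductive step contains no $q$-computation at all: it introduces the abstract algebra $U_q(n)$ presented by the cubic relation (1.11), the one-point evaluation $U_q(n)\rightarrow\mathbb{C}_q[X,X^{-1}]$, $\Sigma_{1}^{x}\mapsto\lambda X$, $\Sigma_{1}^{x^{-1}}\mapsto X^{-1}$, and then the twisted coproduct $U_q(n)\rightarrow U_q(n)\bar{\otimes}U_q(n)$ composed with this evaluation, iterated until it lands in $\mathbb{C}[X_1,X_1^{-1},\cdots,X_n,X_n^{-1}]$; the relations are transported automatically because every arrow is a morphism of $\Lambda$-graded algebras (Proposition 1.1 and the construction (1.4)). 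This is exactly the ``Feigin homomorphism'' picture that you relegate to a closing cross-check, and it is only sketched in the paper (the intermediate map is even written with the tensor factors in a form that does not typecheck, and the constants coming from the Laurent inverses are never mentioned). Your primary argument instead extends the explicit Claim~1/Claim~2 scheme of the preceding theorem to the cubic relation, and its real content is precisely the point the paper's sketch never confronts: since $X_{n+1}^{-1}$ is a genuine inverse, the pair $({\Sigma_{1}^{x}}',X_{n+1}^{-1})$ obeys the inhomogeneous relation ${\Sigma_{1}^{x}}'X^{-1}=q^{-2}X^{-1}{\Sigma_{1}^{x}}'+(1-q^{-2})$, and the resulting constants must cancel against the Serre coefficients. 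Your bookkeeping there is right: writing $S={\Sigma_{1}^{x}}'$, $Y=X^{-1}$, $c=1-q^{-2}$, $a=q^{2}+1+q^{-2}$, one gets $S^{2}Y=q^{-4}YS^{2}+c(1+q^{-2})S$ and $S^{3}Y=q^{-6}YS^{3}+c(1+q^{-2}+q^{-4})S^{2}$, so the Serre expression equals $(q^{-6}-aq^{-4}+aq^{-2}-1)\,YS^{3}+c\bigl[(1+q^{-2}+q^{-4})-a(1+q^{-2})+a\bigr]S^{2}$, and both brackets vanish; likewise the degree-$1,2,3$ cancellations in your Claim~1 hold using only the clean commutations (the needed identities being $aq^{2}=q^{4}+q^{2}+1$ and $aq^{-2}=1+q^{-2}+q^{-4}$), the splitting of Claim~2 by linearity in the second argument is legitimate, and the symmetry $X_i\leftrightarrow X_i^{-1}$ does yield the second relation of (1.11). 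In short: the paper's route buys brevity and conceptual economy, at the price of being schematic; your route is longer but self-contained and checkable, and it isolates and resolves the one genuinely new difficulty of the Laurent case.
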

\begin{proof}
	Proof by induction on $k$.\\
	For $k=2$, Set $\Sigma_{1}^{x} = x_1 + x_2$ and $\Sigma_{1}^{x^{-1}} = x_{1}^{-1} + x_{2}^{-1}$ and as we checked out, it's straightforward to show that they are satisfying in quantum Serre relations (\ref{Equ1}).\\
	Suppose that it's true for $k=n$ components $x_1, \cdots , x_n$. Again as before we define:
	$$U_q(n) := \{ \Sigma_{1}^{x} , \Sigma_{1}^{x^{-1}} | {(\Sigma_{1}^{x})}^{3} \Sigma_{1}^{x^{-1}} - (q^2 + 1 + q^{-2} ) {(\Sigma_{1}^{x})}^{2} \Sigma_{1}^{x^{-1}} \Sigma_{1}^{x} + ( q^2 + 1 + q^{-2} ) \Sigma_{1}^{x^{-1}} \Sigma_{1}^{x} {(\Sigma_{1}^{x^{-1}})}^{2} $$
	$- \Sigma_{1}^{x^{-1}} {(\Sigma_{1}^{x})}^{3} = 0 \} $\\
	Define $U_q(n) \rightarrow \mathbb{C}_q[X,X^{-1}]$; $\Sigma_{1}^{x} \mapsto \lambda X$; $\Sigma_{1}^{x^{-1}} \mapsto X^{-1}$ for $\lambda \in \mathbb{C}^*$\\
	And define $U_q(n) \rightarrow U_q(n) \bar{\otimes} U_q(n) \rightarrow U_q(n) \bar{\otimes} \mathbb{C}[X, X^{-1} ]$; $\Sigma_{1}^{x} \mapsto \Sigma_{1}^{x} \otimes 1 + X \bar{\otimes } \Sigma_{1}^{x}$;$\Sigma_{1}^{x^{-1}} \mapsto \Sigma_{1}^{x^{-1}} \otimes 1 + X^{-1} \bar{\otimes } \Sigma_{1}^{x^{-1}}$.\\
	And $U_q(n) \rightarrow \underbrace{U_q(n) \bar{\otimes} U_q(n) \bar{\otimes} \cdots \bar{\otimes} U_q(n)}_{n ~ terms} \rightarrow \mathbb{C}[X_1, X_{1}^{-1} ] \bar{\otimes} \mathbb{C}[X_2, X_{2}^{-1} ] \bar{\otimes} \cdots \bar{\otimes} \mathbb{C}[X_n, X_{n}^{-1} ] \cong \mathbb{C}[X_ 1, X_{1}^{-1} , \cdots ,X_n, X_{n}^{-1} ]  $
	
\end{proof}

\section{Local integral of motions; Volkov's scheme}

Set two screening operators\\
\begin{equation}
\Sigma_{i}^{x^{-1}} = \Sigma_i x_{i}^{-1},\\
\end{equation}
$$
\Sigma_{j}^{x} = \Sigma_j x_j.
$$
as we already saw, for these operators we have ${(ad_q \Sigma_{i}^{x^{-1}})}^{1-a_{ij}} (\Sigma_{j}^{x})=0$.\\
The project here is to find an analogue of $R-$ matrix $''R''$ such that
\begin{equation}\label{Equ0}
(X_1+ \cdots + X_k) R(X_1,  \cdots  , X_k) = R(X_1,  \cdots  , X_k) (X_1+ \cdots + X_k)
\end{equation}
satisfy.\\

In this section we will try to find a solution for this equation as Volkov planed. We call these kind of solutions  as ''Local integral of motions''.\\
In the sense of Feigin-Pugai \cite{13}, the main idea for to solve such kind of equations is to add ''spectral parameter'' $\beta$ to $k$ points screening operator and to define an analogue of $R-$ matrix:
\begin{equation}\label{Equ2}
(\beta x_1 + x_2 +  \cdots + X_k ) R(x_1,x_2, \cdots  , X_k ) = R(x_1,x_2, \cdots  , X_k )( x_1 + x_2 +  \cdots + \beta X_k ),
\end{equation}
$$
(\beta x_{1}^{-1} + x_{2}^{-1} + \cdots + x_{k}^{-1}) R(x_1,x_2, \cdots  , X_k  ) = R(x_1, x_2, \cdots  , X_k  )( x_{1}^{-1} + x_{2}^{-1} + \cdots + \beta x_{k}^{-1})
$$
\subsection*{ Example $U_q(\hat{sl_2})$; two point invariants}
Let us try to solve this equation for just two points $x_1$ and $x_2$. \\
In this case, our equations (\ref{Equ2}) will reduce to the following ones:
\begin{equation}\label{Equ3}
(\beta x_1 + x_2 ) R(x_1,x_2) = R(x_1,x_2)(x_1 + \beta x_2),
\end{equation}
$$
(\beta x_{1}^{-1} + x_{2}^{-1} ) R(x_1,x_2) = R(x_1,x_2)(x_{1}^{-1} + \beta x_{2}^{-1})
$$
There is a solution for these equation in \cite{2}, but we are interested on re finding them again here. For to do this, let us change the equations (\ref{Equ3}) to the following one, for simplicity. Set $\alpha_1 = x_1 x_{2}^{-1}$ and $R(x_1 , x_2):= R_{1,2}$,
\begin{equation}\label{Equ4}
(\beta x_1 + x_2 ) R(x_1 x_{2}^{-1}) = R(x_1 x_{2}^{-1})(x_1 + \beta x_2),
\end{equation}
$$
(\beta x_{1}^{-1} + x_{2}^{-1} ) R(x_1 x_{2}^{-1}) = R(x_1 x_{2}^{-1})(x_{1}^{-1} + \beta x_{2}^{-1})
$$
The solutions of these equations are identical to the previous one, we did this change just because to find the solutions in these ones are easier than the previous one and less confusing.\\
Then both of (\ref{Equ4}) will reduce to this linear difference equation:
\begin{equation}\label{Equ18}
(\beta \alpha_1 + 1 ) R_{1,2}(q^{-1} u ; \beta) = (u + \beta)R_{1,2}(u ; \beta ),
\end{equation}
Let us do it for one of them (the first one) for to see the procedure (there is an identical approach for the second one):\\
$(\beta x_1 + x_2 ) R(x_1 x_{2}^{-1}) = R(x_1 x_{2}^{-1})(x_1 + \beta x_2)$\\
$(\beta x_1 x_{2}^{-1} + 1 )x_2 R(x_1 x_{2}^{-1}) = R(x_1 x_{2}^{-1})(x_1 x_{2}^{-1} + \beta ) x_2$\\
Set $R(\alpha_1) = \Sigma C_{m0} \alpha_{1}^{m}$ and then distribute $x_2$ in it from the left and then bring it out to the right hand side, by using the $q-$commutation relations. The idea is to disappear $x_2$ from the both sides by multiplying the equation by $x_{2}^{-1}$ from the right side. So we have\\
$(\beta x_1 x_{2}^{-1} + 1 ) R(q^{-1} x_1 x_{2}^{-1}) x_2 = R(x_1,x_2)(x_1 x_{2}^{-1} + \beta ) x_2$ \\
\begin{equation}
(\beta \alpha_1 + 1 ) R_{1,2}(q^{-1} \alpha_1; \beta )= R_{1,2}(\alpha_1 ; \beta)(\alpha_1 + \beta )
\end{equation}

Lets try to find $R_{1,2}(\alpha_1 ; \beta)$:\\

$(\ref{Equ18}) \Rightarrow R_{1,2}(\alpha_1 ; \beta ) = \frac{\beta \alpha_1 + 1 }{\alpha_1 + \beta}  R_{1,2}(q^{-1} \alpha_1 ; \beta) $ \\
$\hspace*{94pt}= \frac{\beta \alpha_1 + 1 }{\alpha_1 + \beta} \cdot \frac{\beta q^{-1} \alpha_1 + 1 }{q^{-1} \alpha_1 + \beta} R_{2,3}(q^{-2} \alpha_1 ; \beta) $\\
$\hspace*{94pt}= \frac{\beta \alpha_1 + 1 }{\alpha_1 + \beta} \cdot \frac{\beta q^{-1} \alpha_1 + 1 }{q^{-1} \alpha_1 + \beta} \cdot \frac{\beta q^{-2} \alpha_1 + 1 }{q^{-2} \alpha_1 + \beta} R_{3,4}(q^{-3} \alpha_1 ; \beta)$ \\
$\hspace*{94pt}= \cdots  = \prod_{i=0}^{\infty} R_{i,i+1} $ \\
$\hspace*{94pt}= \prod_{i=0}^{\infty} \frac{\beta q^{-i} \alpha_1 + 1 }{q^{-i} \alpha_1 + \beta}$\\

But we need some thing more, so lets continue;\\
For to find its recursive sequence we have to pass the following steps:\\

$ \hspace*{80pt}(\beta x_1 + x_2 ) R( \alpha_1 ; \beta) = R( \alpha_1 ; \beta) ( x_1 + \beta x_2 ) $\\
$\hspace*{91pt} ( \beta \alpha_1 + 1 ) R(q^{-1} \alpha_1 ; \beta) = R(\alpha_1 ; \beta) ( \alpha_1 + \beta )$\\
$\Rightarrow_{R(\alpha_1; \beta) = \Sigma_{i=0}^{\infty} C_{i,0} \alpha_{1}^{i}} ~ ~ ~ \Sigma_{i=0}^{\infty} C_{i,0} q^{-i} \beta \alpha_{1}^{i+1}  + \Sigma_{i=0}^{\infty} C_{i,0} q^{-i} \alpha_{1}^i = \Sigma_{i=0}^{\infty} C_{i,0} \alpha_{1}^{i+1} + \Sigma_{i=0}^{\infty} C_{i,0} \beta \alpha_{1}^{i}  $\\

$\hspace*{-5pt}\Sigma_{i=1}^{\infty} C_{i-1,0} q^{-i+1} \beta \alpha_{1}^{i}  + \Sigma_{i=1}^{\infty} C_{i,0} q^{-i} \alpha_{1}^i + C_{0,0} = \Sigma_{i=1}^{\infty} C_{i-1,0} \alpha_{1}^{i} + \Sigma_{i=1}^{\infty} C_{i,0} \beta \alpha_{1}^{i} + \beta C_{0,0}$\\
$ \hspace*{80pt} \Sigma_{i=1}^{\infty} ( (q^{-i + 1} \beta -1 ) C_{i-1 , 0 } + (q^{-i} - \beta ) C_{i, 0 } ) \alpha_{1}^i = 0 $\\
And then by comparing the coefficients in both side of the equation, we reach to the following key rule recursive sequence that we will use it for to find our final solution in the case of two points.
\[  \left\{
\begin{array}{l l}
C_{0,0} = 1 \\
C_{i,0} = \frac{1 -  q^{-i + 1 } \beta  }{q^{-i } - \beta } C_{i - 1,0}  & \quad  ~ for ~ i = 1 , ..., \infty \\
C_{0,0} = \beta C_{0,0} \Rightarrow \beta =1 \\
\end{array} \right.\]
And now let us to set an general agreement for to simplify writing:\\
Set $(\beta)_n := (1- \beta)(1 - q \beta)(1 - q^2 \beta) \cdots ( 1 - q^{n-1} \beta)$ and let our summation be finite, i.e. set $i \in \{ 0 , \cdots , n \}$ and $R(\alpha_1; \beta) = \Sigma_{i=0}^{n} C_{i,0} \alpha_{1}^{i}$ and in the next step we can extend our radius of convergence.\\
Now lets try to find it:\\

$C_{i,0} = \frac{1 -  q^{-i + 1 } \beta  }{q^{-i } - \beta } C_{i - 1,0}$\\
$\hspace*{12pt} C_{i,0} = \frac{1 -  q^{-i + 1 } \beta  }{q^{-i } - \beta } \cdot \frac{1 -  q^{-i + 2 } \beta  }{q^{-i+1 } - \beta } C_{i - 2,0}$\\
$\hspace*{12pt} C_{i,0} = \frac{1 -  q^{-i + 1 } \beta  }{q^{-i } - \beta } \cdot \frac{1 -  q^{-i + 2 } \beta  }{q^{-i+1 } - \beta } \cdot \frac{1 -  q^{-i + 3 } \beta  }{q^{-i+2 } - \beta } C_{i - 3,0}$\\
$\hspace*{12pt} \vdots$\\
$\hspace*{12pt} C_{i,0} = \frac{1 -  q^{-i + 1 } \beta  }{q^{-i } - \beta } \cdot \frac{1 -  q^{-i + 2 } \beta  }{q^{-i+1 } - \beta } \cdot \frac{1 -  q^{-i + 3 } \beta  }{q^{-i+2 } - \beta } \cdots \frac{1 -  q^{-3} \beta  }{q^2 - \beta } \cdot \frac{1 -  q^{-2} \beta  }{q - \beta } \cdot \frac{1 -  q^{-1} \beta  }{1 - \beta } $\\
$\hspace*{12pt} C_{i,0} = \frac{(1 -  q^{-i + 1 } \beta)(1 -  q^{-i + 2 } \beta)(1 -  q^{-i + 3 } \beta ) \cdots (1 -  q^{ -3 } \beta)(1 -  q^{- 2 } \beta)(1 -  q^{-1} \beta)}{(q^{-i } - \beta)(q^{-i+1 } - \beta)(q^{-i+2 } - \beta)(q^{-i+3 } - \beta) \cdots (q^{2 } - \beta)(q - \beta)(1 - \beta)}$\\
$\hspace*{12pt} C_{i,0} = \frac{(1 -  q^{-i + 1 } \beta)(1 -  q^{-i + 2 } \beta)(1 -  q^{-i + 3 } \beta ) \cdots (1 -  q^{ -3 } \beta)(1 -  q^{- 2 } \beta)(1 -  q^{-1} \beta)}{q^{-i }(1 - q^{i }\beta)q^{-i+1 }(1 - q^{i-1 }\beta)q^{-i+2 }(1 - q^{i-2 }\beta)q^{-i+3 }(1 - q^{i-3 } \beta) \cdots q^{-2 }(1 - q^{2 }\beta)q^{-1}(1 - q\beta)(1 - \beta)}$\\
$\hspace*{12pt} C_{i,0} = \frac{(1 -  q^{-i + 1 } \beta)(1 -  q^{-i + 2 } \beta)(1 -  q^{-i + 3 } \beta ) \cdots (1 -  q^{- 3 } \beta)(1 -  q^{ -2 } \beta)(1 -  q^{-1} \beta)}{q^{-i }q^{-i+1 }q^{-i+2 }q^{-i+3 } \cdots q^{-2 } q^{-1}(1 - q^{i }\beta)(1 - q^{i-1 }\beta)(1 - q^{i-2 }\beta)(1 - q^{i-3 } \beta) \cdots (1 - q^{2 }\beta)(1 - q\beta)(1 - \beta)}$\\
$\hspace*{12pt} C_{i,0} = \frac{(1 -  q^{-i + 1 } \beta)(1 -  q^{-i + 2 } \beta)(1 -  q^{-i + 3 } \beta )}{q^{(-i+0)+(-i+1)  +(-i+2) + (-i+3) +  \cdots  + (-i +(i-2)) + (-i +(i-1)) +(-i +(i+0))}}  $\\
$\hspace*{141pt} \cdot \frac{\cdots}{\cdots} \cdot$\\
$\hspace*{42pt} \frac{(1 -  q^{ -3 } \beta)(1 -  q^{ -2 } \beta)(1 -  q^{-1} \beta)}{(1 - q^{i }\beta)(1 - q^{i-1 }\beta)(1 - q^{i-2 }\beta)(1 - q^{i-3 } \beta) \cdots (1 - q^{2 }\beta)(1 - q\beta)(1 - \beta)}$\\

In infinity when $i \rightarrow +\infty$ we have $q^{-i} \rightarrow 1$; So we have:

$\hspace*{12pt} C_{i,0} = \frac{(1 -  q^{-i + 1 } \beta)(1 -  q^{-i + 2 } \beta)(1 -  q^{-i + 3 } \beta )}{q^{(0)+(1)  +(2) + (3) +  \cdots  + ((i-2)) + ((i-1)) +((i+0))}}  $\\
$\hspace*{141pt} \cdot \frac{\cdots}{\cdots} \cdot$\\
$\hspace*{42pt} \frac{(1 -  q^{ -3 } \beta)(1 -  q^{ -2 } \beta)(1 - q^{-1} \beta)}{(1 - q^{i }\beta)(1 - q^{i-1 }\beta)(1 - q^{i-2 }\beta)(1 - q^{i-3 } \beta) \cdots (1 - q^{2 }\beta)(1 - q\beta)(1 - \beta)}$\\
$\hspace*{12pt} C_{i,0} = \frac{(1 -  q^{-i + 1 } \beta)(1 -  q^{-i + 2 } \beta)(1 -  q^{-i + 3 } \beta ) \cdots (1 -  q^{ -3 } \beta)(1 -  q^{ -2 } \beta)(1 -  q^{-1} \beta)}{q^{\frac{n(n-1)}{2}} (q \beta)_n}  $\\

$\hspace*{12pt} C_{i,0} = \frac{(1 -  q^{-i } q\beta)(1 -  q^{-i + 1 } q\beta)(1 -  q^{-i + 2 } q \beta ) \cdots (1 -  q^{ -4 } q\beta)(1 -  q^{ -3 } q\beta)(1 - q^{ -2 } q \beta)}{q^{\frac{n(n-1)}{2}} (q \beta)_n}  $\\

$\hspace*{12pt} C_{i,0} = \frac{(q\beta)(\frac{q^{-1 }}{\beta} -  q^{-i } )(q\beta)(\frac{q^{-1 }}{\beta} -  q^{-i + 1 })(q\beta)(\frac{q^{-1 }}{\beta} -  q^{-i + 2 }  ) \cdots (q\beta)(\frac{q^{-1 }}{\beta} -  q^{ -4 } )(q\beta)(\frac{q^{-1 }}{\beta} -  q^{ -3 } )(q\beta)( \frac{q^{-1}}{\beta}-  q^{ -2 })(1 - \frac{1}{\beta})}{q^{\frac{n(n-1)}{2}} (q \beta)_n (-q \beta)^{-1}}  $\\

$\hspace*{12pt} C_{i,0} = \frac{(q\beta)^{n-1} ( q^{-i } q^{-i + 1 } q^{-i + 2 } \cdots q^{- 2 }q^{ -1 } )( \frac{q^{i - 1 }}{\beta} - 1 )( \frac{q^{i - 2 }}{\beta} -  1 )( \frac{q^{i - 3 }}{\beta}  -  1 ) \cdots ( \frac{q^{3}}{\beta} -  1 )( \frac{q^{2}}{\beta} -  1 )( \frac{q^{1}}{\beta}-  1)(1 - \frac{1}{\beta})}{q^{\frac{n(n-1)}{2}} (q \beta)_n  (-q \beta)^{-1}} $\\

$\hspace*{12pt} C_{i,0} = \frac{(-q\beta)^n ( q^{\frac{n(n-1)}{2}} )(1- \frac{q^{i - 1 }}{\beta}  )(1- \frac{q^{i - 2 }}{\beta} )(1- \frac{q^{i - 3 }}{\beta}  ) \cdots ( 1- \frac{q^{3}}{\beta}  )( 1 -\frac{q^{2}}{\beta}  )(1 - \frac{q}{\beta})(1 - \frac{1}{\beta})}{q^{\frac{n(n-1)}{2}} (q \beta)_n}  $\\
\begin{equation}\label{Equ19}
C_{i,0} = \frac{(-q \beta)^n (\frac{1}{\beta})_n}{(q \beta)_n}
\end{equation}
\subsection*{ Example $U_q(\hat{sl_2})$; three point invariants}

As what we had for previous example in two points; we will proceed the same steps for to find the solution of the equation (\ref{Equ0}) for three points $x_0, x_1 , x_2$.\\
Set $\alpha_i = x_i x_{i+1}^{-1}$; such that  $\alpha_i \alpha_j = q \alpha_j \alpha_i$ for $i , j \in I$ as usual, and $R(\alpha_0, \alpha_1) = \Sigma_{n, m} C_{n, m} \alpha_{0}^{n} \alpha_{1}^{m}$.\\

We are trying to solve the following difference equation subject to $R$;
\begin{equation}
(\beta x_0 +  x_1 + x_2 ) R({\alpha_0}, {\alpha_1} ; \beta) = R({\alpha_0}, {\alpha_1} ; \beta) ( x_0 +  x_1 + \beta x_2 ),
\end{equation}
The process is exactly same as the previous one, so we will skip writing them here.\\
For this equation We got the following recursive sequences, that will guide us to reach to our main solution  for  $n , m = 1 , ..., +\infty$ .


\[  \left\{
\begin{array}{l l}
C_{0,0} = 1 \\
C_{n,m} = \frac{q^{-m + 1 } -  q^{-n - m +2 } \beta }{q^{- m  } - \beta } C_{n - 1,m - 1} +   \frac{1 - q^{-n - m +1 } }{q^{- m  }- \beta } C_{n ,m - 1}   \\
C_{0,m} = \frac{1 - q^{-m + 1 } }{\beta - q^{-m} } C_{0 ,m - 1}  \\

\end{array} \right.\]

And by considering the second sequence as our main key, and following it; we arrived to a nice and important sequence :


\begin{equation}
C_{n,m} = \frac{(- q \beta )^{n} q^{\frac{m(m -1)}{2}} (\frac{1}{\beta})_{n}}{(\beta)_{m} (q \beta)_{{n}- m }}
\end{equation}
That is compatible with the equation (\ref{Equ19}) when $m=0$. And this can show the correctness of our calculation.

\section{Lattice Virasoro algebra}

In this section we are interested on solutions $\Sigma_{1_x}$ of system of difference equation
\[
\begin{cases}
X_i X_j = q X_j X_i       \\
deg(\Sigma_{1_x})=0 \\
[\Sigma_{-\infty}^{+\infty} X_i , \Sigma_{1_x} ]_q =0
\end{cases}
\]
that will be a generator of lattice Virasoro algebra. \\
If we be able to find such kind of solution; then we can extend it to an another generator by a shift operator:
\begin{equation}\label{Equ5}
\Sigma_{2_x} = \Sigma_{1_x} [x_1 \rightarrow x_2 , x_2 \rightarrow x_3, x_3 \rightarrow x_4, \cdots
\end{equation}
$$\Sigma_{3_x} = \Sigma_{2}^{x} [x_2 \rightarrow x_3 , x_3 \rightarrow x_4, x_4 \rightarrow x_5, \cdots$$
$$\vdots$$
where $\Sigma_{1_x} = \Sigma_{1_x} (x_1, x_2, \cdots , x_k)$.
\subsection*{Lattice Virasoro algebra connected to $sl_2$}
Here as always, we have the $q-$commutation relation $X_i X_j = q X_j X_i, ~ i<j$ between the points in $sl_2$.\\
Let us try to find three points invariants; this means to try to solve the following system of difference equation:
\[
\begin{cases}
X_i X_j = q X_j X_i       \\
deg(\Sigma_{1_x})=0 \\
(X_1 + X_2 + X_3) \Sigma_{1_x}(X_1, X_2, X_3) =  \Sigma_{1_x}(X_1, X_2, X_3) (X_1 + X_2 + X_3).
\end{cases}
\]
One can find easily the trivial solutions of the second equation as follows:
\[
\begin{cases}
\Sigma_{11_x}(X_1, X_2, X_3) = X_1 + X_2 + X_3
\Sigma_{12_x}(X_1, X_2, X_3) = X_1  X_{2}^{-1} X_3
\end{cases}
\]
but as we see, non of them have zero grading. So we should find another solution.\\
By just keeping to look at them for a while, we can see that by multiplying these kind of solutions, one can find a zero grading expression, but it's not satisfying for these two ones. Again we note that for a solution; it's inverse is again a solution, so by this remark, we have two options here. We can inverse $\Sigma_{11_x}$ or $\Sigma_{12_x}$ and then multiply it with the other one. In both case we will have same set of generators except that in the first case (inverse of  $\Sigma_{11_x}$), lattice Virasoro algebra is generated by elements of form $\Sigma_{i_x} = X_i X_{i+1}^{-1} X_{i+2} (X_i + X_{i+1} + X_{i+2})^{-1}$ and in the second case (inverse of  $\Sigma_{12_x}$), lattice Virasoro algebra is generated by elements of form $\Sigma_{i_x} = (X_i + X_{i+1} + X_{i+2}) X_{i}^{-1} X_{i+1} X_{i+2}^{-1} $.\\
And by a simple fact that our space of working is closed under multiplication, so these new recently found objects can be a trivial solution for our system of difference equation. And then by shift operators (\ref{Equ5}), we will have the set of generators for our lattice Virasoro algebra connected to $sl_2$.
\subsection*{Lattice Virasoro algebra connected to $U_q(sl_2)$}

Set $A = \frac{\mathbb{C}[q] <x_j , x_i >}{(x_i x_j - q x_j x_i)}$ the algebra of polynomials in variables $q , x_i$  over $\mathbb{C}$ for $i \in I$(our ordered index set), such that
\[
\begin{cases}
q x_i = x_i q & \quad ~ for ~ \text{$i \in I$}\\
x_i x_j = q x_j x_i & \quad  \text{if $i < j$ }
\end{cases}
\]
Our first project is to extend the usual binomial expansion to this algebra, for example we can see the shape of such expansion in a lower exponent 3:\\
$(x_i + x_j)^3 = (x_i + x_j)(x_i + x_j)(x_i + x_j)$\\
$ \hspace*{40pt} = x_i x_i x_i + x_i x_j x_i + x_j x_i x_i + x_j x_j x_i + x_i x_i x_j + x_i x_j x_j + x_j x_i x_j + x_j x_j x_j$\\
$ \hspace*{40pt}= x_i x_i x_i + q x_j x_i x_i +  x_j x_i x_i + x_j x_j x_i + q^2 x_j x_i x_i + q^2 x_j x_j x_i + q x_j x_j x_i + $\\
$ \hspace*{49pt} $ $x_j x_j x_j $\\
$ \hspace*{40pt}= x_{i}^{3} + q x_{j}x_{i}^{2} + x_{j}x_{i}^{2} + x_{j}^{2} x_{i} + q^2 x_{j} x_{i}^{2} + q^2 x_{j}^{2} x_{i} + q  x_{j}^{2} x_{i} + x_{j}^{3} $\\
$ \hspace*{40pt} = x_{i}^{3} + (1 + q + q^2) x_{j}^{2}x_i + (1 + q + q^2) x_{j} x_{i}^{2} + x_{j}^{3}$\\
$ \hspace*{40pt}= \Sigma_{k=0}^{3} {3 \choose k}_q x_{j}^{3-k} x_{i}^{k}$

But for to prove it in a general case, we will use some techniques from combinatorics:\\

$ \hspace*{20pt}$ Suppose $x_j$ and $x_i$ as above and set $\omega$ as a word formed by $x_j$ and $x_i$. Then it's easy to see that any such kind of words can be permuted to $x_{j}^{l} x_{i}^{k}$ along with a factor power of $q$, by using the $q-$commutation rule.  For example, $x_{i} x_j x_j x_i x_i x_j x_j x_i x_i x_j x_i= q^{13} x_{j}^{5} x_{i}^{6} $, as the first $x_i $ should pass 5 $x_j$'s and the second and third $x_i$ should pass 3 $x_j$'s and forth and fifth  $x_i$ should pass 1 $x_j$ and sixth will be stable.\\
Now according to this fact , each word $\omega$ consist of $k$ $x_i$'s and $n-k=l$ $x_j$'s in $(x_i + x_j)^n$. That corresponds to a partition which lies inside an $(n-k) \times k$ rectangle. On the other hand each such partition corresponds to a unique word $\omega$. Lets look at our example again; we have $\omega = x_{i} x_j x_j x_i x_i x_j x_j x_i x_i x_j x_i$, and the partition is $533110$. If $\omega = q^m x_{j}^{n-k} x_{i}^{k},$ then as we see $m$ is the sum of the parts of the partition. And the generating function for all partitions lie inside an $(n-k) \times k$ rectangle is the definition of the $q-$ binomial coefficient ${n \choose n-k}_q = { n \choose k}_q$. So for a positive complex number $n$ we will have
$$ ( x_i + x_j )^n = \Sigma_{k=0}^{n} { n \choose k}_q x_{j}^{n-k} x_{i}^{k}  $$
But what will happen for the negative exponents? \\
It's our next deal for to face. What we need to prove is to see what ${ n \choose k }_q $ will be when we replace $n$ with $-n$?\\
According to the definition $q-$ binomial coefficient, we have  \\ ${ n \choose k }_q = \frac{(1-q^n)(1 - q^{n-1})(1 - q^{n-2}) \cdots (1 - q^{n-k +1})}{(1 - q^{k})(1 - q^{k-1}) \cdots (1 - q^{1})}$.\\
Now by replacing $n$ with $-n$ we will have: \\
${ -n \choose k }_q = \frac{(1-q^{-n})(1 - q^{-n-1})(1 - q^{-n-2}) \cdots (1 - q^{-n-k +1})}{(1 - q^{k})(1 - q^{k-1}) \cdots (1 - q^{1})}$\\
$\hspace*{30pt} = \frac{(1-{q^{-1}}^n)(1 - {q^{-1}}^{n-1})(1 - {q^{-1}}^{n-2}) \cdots (1 - {q^{-1}}^{n+k -1})}{(1 - {q^{-1}}^{-k})(1 - {q^{-1}}^{-k+1}) \cdots (1 - {q^{-1}}^{-1})}$\\
$\hspace*{30pt} = \frac{(1 - {q^{-1}}^{n+k -1}) (1 - {q^{-1}}^{n+k -2})\cdots (1 - {q^{-1}}^{n+k -(k-2)})(1 - {q^{-1}}^{n+k -(k-1)}) (1 - {q^{-1}}^{n+k -(k)})}{(q^{-1})^{-k} ((q^{-1})^{k} -1))(q^{-1})^{-k+1} ((q^{-1})^{k-1} -1)) \cdots (q^{-1})^{-1} ((q^{-1})^{1} -1))}$\\
$\hspace*{30pt} = \frac{(1 - {q^{-1}}^{n+k -1}) (1 - {q^{-1}}^{n+k -2})\cdots (1 - {q^{-1}}^{n+k -(k-2)})(1 - {q^{-1}}^{n+k -(k-1)}) (1 - {q^{-1}}^{n+k -(k)})}{((q^{-1})^{-k} (q^{-1})^{-k+1} \cdots (q^{-1})^{1}) (-1)^k (1 - (q^{-1})^k)(1 - (q^{-1})^{k-1}) \cdots (1 - (q^{-1})^1)} $\\
$\hspace*{30pt} = \frac{(1 - {q^{-1}}^{n+k -1}) (1 - {q^{-1}}^{n+k -2})\cdots (1 - {q^{-1}}^{n+k -(k-2)})(1 - {q^{-1}}^{n+k -(k-1)}) (1 - {q^{-1}}^{n+k -(k)})}{(q^{\frac{k(-k+1)}{2}})(-1)^k ((1 - (q^{-1})^k)(1 - (q^{-1})^{k-1}) \cdots (1 - (q^{-1})^1))}$\\
$\hspace*{30pt} = \frac{(-1)^{-k} q^{\frac{-k(-k+1)}{2}} (1 - {q^{-1}}^{n+k -1}) (1 - {q^{-1}}^{n+k -2})\cdots (1 - {q^{-1}}^{n+k -(k-2)})(1 - {q^{-1}}^{n+k -(k-1)}) (1 - {q^{-1}}^{n+k -(k)}) }{(1 - (q^{-1})^k)(1 - (q^{-1})^{k-1}) \cdots (1 - (q^{-1})^1)}$\\
$\hspace*{30pt} = \frac{(-1)^{k} q^{{k \choose 2}} (1 - {q^{-1}}^{n+k -1}) \cdots (1 - {q^{-1}}^{n}) }{(1 - (q^{-1})^k)(1 - (q^{-1})^{k-1}) \cdots (1 - (q^{-1})^1)}$ \\
$\hspace*{30pt} =  (-1)^{k} q^{{k \choose 2}} {n+k -1 \choose k}_{q^{-1}} $.\\
So as what we had for a positive exponent, we will have the result for negative exponent as follows:
\begin{equation}\label{Equ6}
(x_i + x_j)^{-n} = \Sigma_{k=0}^{\infty} { -n \choose k }_q x_{j}^{-n-k} x_{i}^{k} = \Sigma_{k=0}^{\infty} (-1)^{k} q^{{k \choose 2}} {n+k -1 \choose k}_{q^{-1}}  x_{j}^{-n-k} x_{i}^{k}
\end{equation}

\begin{rem}
	And it's identical to write the summation (\ref{Equ6}) from $-\infty$ to $0$ as follows:
	\begin{equation}
	(x_i + x_j)^{-n} = \Sigma_{k= - \infty}^{0} { -n \choose k }_{q^{-1}} x_{j}^{-n+k} x_{i}^{-k} = \Sigma_{k= - \infty}^{0} (-1)^{k} q^{\frac{k(k+1)}{2}} {n-k -1 \choose -k}_{q}  x_{j}^{-n+k} x_{i}^{-k}
	\end{equation}
\end{rem}

\subsection*{Formulation for to extend to four and more invariant points}

Set ${\Sigma}^x = U_{-} + \Sigma_{i=1}^{k} X_i + U_{+}$, where $U_{+} = \Sigma_{i=k+1}^{+\infty} X_i$ and $U_{-} = \Sigma_{i= -\infty}^{0} X_i$.\\
Set $(F_{1,k}^{x})^{(0)} = f(x_1, \cdots , x_k) = \Sigma C_{\beta} X_{1}^{\beta_1} \cdots  X_{k}^{\beta_k}$ such that\\
$[U_{+} , (F_{1,k}^{x})^{(0)} ] = U_{+} (F_{1,k}^{x})^{(0)} - (F_{1,k}^{x})^{(0)}U_{+}$\\
$\hspace*{64pt} = U_{+} (F_{1,k}^{x})^{(0)} - (F_{1,k}^{x})^{(0)} X_{k+1} -  (F_{1,k}^{x})^{(0)} X_{k+2} - \cdots $\\
$\hspace*{64pt} = U_{+} (F_{1,k}^{x})^{(0)} - q^{- deg (F_{1,k}^{x})^{(0)}} (X_{k+1} + X_{k+2} + \cdots) (F_{1,k}^{x})^{(0)}$\\
$\hspace*{64pt} = U_{+} (F_{1,k}^{x})^{(0)} - q^{- deg (F_{1,k}^{x})^{(0)}} U_{+} (F_{1,k}^{x})^{(0)}  $\\
\begin{equation}
\hspace*{-94pt} = (1 - q^{- deg (F_{1,k}^{x})^{(0)}} )U_{+} (F_{1,k}^{x})^{(0)}
\end{equation}

as well as for $U_{-}$\\
\begin{equation}\label{Equ7}
[U_{-} , (F_{1,k}^{x})^{(0)} ]= (1 - q^{ deg (F_{1,k}^{x})^{(0)}} )U_{-} (F_{1,k}^{x})^{(0)}
\end{equation}

If we suppose $deg (F_{1,k}^{x})^{(0)} = 0$, then both of $[U_{+} , (F_{1,k}^{x})^{(0)} ] $ and $[U_{-} , (F_{1,k}^{x})^{(0)} ]$ will be zero and we will have to check the correctness of $[\Sigma_{i=1}^{k} , (F_{1,k}^{x})^{(0)}  ]\stackrel{?}{=}0$. If it was true? then we will have a generator for lattice Virasoro algebra and by the shift operator, we will have all set of generators for it.\\

Let us define a Poisson bracket as Drinfeld defined and then compute some results by using that:\\
$\{ X_j , X_{i}^{n}  \} := Lim_{q \rightarrow 1} \frac{ad_{X_j} X_{i}^{n}}{1 - q}$ and then we have:\\
$ad_{X_i} X_{i}^{n} =0$, in both  classical and quantum case.\\
$ad_{X_i} X_{j}^{n} = (1 - q^n) X_i X_{j}^{n} $ and $ad_{X_j} X_{i}^{n} = (1 - q^n) X_j X_{i}^{n}$ for $i < j$ in quantum case.\\
$ad_{X_i} X_{j}^{n} = 0 $ and $ad_{X_j} X_{i}^{n} = 0$ for $i < j$ in classical case.\\
Now let us find the Poisson bracket for $X_1$ and $X_{i}^{n}$ in classical case\\
\begin{equation}
\{ X_1 , X_{i}^{n}  \} = Lim_{q \rightarrow 1} \frac{(1 - q^n) X_1 X_{i}^{n}}{1 - q} = - n X_1 X_{i}^{n} \partial_{X_i} ~  ~  ~ for ~ ~ i <1
\end{equation}
\begin{equation}
\{ X_1 , X_{i}^{n}  \} = Lim_{q \rightarrow 1} \frac{(1 - q^{-n}) X_1 X_{i}^{n}}{1 - q} =  n X_1 X_{i}^{n} \partial_{X_i} ~  ~  ~ for ~ ~ i >1
\end{equation}

For example we have $\{ X_1 , X_2 \} = X_1 X_2 \partial_{X_2}$\\
By using this operators in the case of the equation $(\ref{Equ7})$ we see that this part when $q \rightarrow 1$ will be zero and so after this time we just will deal with $\Sigma^{X_i} =  \Sigma_{i=1}^{k} X_i + U_{+} $.\\
And also we can find these relations in a more general case for a one variable function on $X_i$ as follows:\\
And let us consider $E_i , F_i, H_i$ as the generators for $U_q(sl_2)$, then $E_i $ and $H_i$ will produce the Borel part $B_{+}$; One of the ways that we can act $B_{+}$ on the $\mathbb{C} [ X_i , X_{i}^{-1} ]$ is as follows\\
\begin{equation}
\pi : B_{+} \times \mathbb{C} [ X_i , X_{i}^{-1} ] \rightarrow \mathbb{C} [ X_i , X_{i}^{-1} ]: (E_i , P) \mapsto \pi(E_i)P:= ad_{\Sigma^{X}} P = [ \Sigma^{X} , P]_q
\end{equation}
\begin{equation}
\pi : B_{+} \times \mathbb{C} [ X_i , X_{i}^{-1} ] \rightarrow \mathbb{C} [ X_i , X_{i}^{-1} ]: (H_i , P) \mapsto \pi(H_i)P:=< \alpha_i , deg P > P
\end{equation}
where $\alpha_i$ is a simple root related to $H_i$ and $P$ an arbitrary homogeneous element of $\mathbb{C} [ X_i , X_{i}^{-1} ]$
\begin{defn}
	Generators of lattice Virasoro algebra associated to simple Lie algebra $g$ constitute the functional basis of space $\textit{Inv}_{U_q(B_{+}}(\mathbb{C} [ X_i , X_{i}^{-1} ]).$ And for to find these generators we need to solve the following functional equations;[13]\\
	$$[ \Sigma_{i_x} , \Sigma^{X} ] =0 ~ ~ and ~ ~ H_i \Sigma_{i_x} =0 ~  ~  ~  (**)$$
\end{defn}
Now the next question is that "how many variable $X_i$ is enough  for to find a nontrivial solution for equations $(**)?$\\
The answer is if we deal with $q$ in a general position, then one can expect that the dimension is dim$(B_{+}) +1$. [13] So in the case of $sl_2$ it will be $3$, the number of variables.[13]\\

Now let us to go back to our example;\\
$X_1 f(X_0) = f(q^{-1} X_0) X_1$\\
Set $q = exp(H)$; \\
$\hspace*{34pt} = f(e^{-H} X_0) X_1$\\
When $q \rightarrow 1$ then $e^{-H} \rightarrow (1 - H)$ and $e^{H} \rightarrow (1 + H)$;\\
$\hspace*{34pt} = f((1 -H) X_0) X_1 = f(X_0 - H X_0) X_1$\\
$\hspace*{34pt} = (f(X_0) - f(H X_0))X_1 = (f(X_0) - H f(X_0))X_1$\\
$\hspace*{34pt} = (f(X_0) - X_1 H f(X_0)) = (f(X_0) - X_1 X_0 \partial_{X_0} f(X_0))$ \\
$\hspace*{34pt} \Rightarrow \{ X_1 , f(X_0) \} = - X_1 X_0 \partial_{X_0} f(X_0)$
So in general if we repeat the process for any $X_{j < 1}$, we will have\\
\begin{equation}
\{ X_i , f(X_{j < i}) \} = - X_i X_j \partial_{X_j} f(X_{j < i})
\end{equation}
According to the Poisson bracket and our early calculation, equations $(**)$ will have the following form\\
\begin{equation}
E_i \Sigma_{i_x} = (X_1(X_1 + X_2 + X_3+U_{+}) \partial_{X_1} + X_2 (X_2 + X_3 + U_{+}) \partial_{X_2} + X_3 U_{+}  \partial_{X_3} + U_{+}^{2} \partial_{U_{+}}  )\Sigma_{i_x} = 0
\end{equation}
$$H_i \Sigma_{i_x} = (X_1 \partial_{X_1} + X_2 \partial_{X_2} + X_3 \partial_{X_3} + U_{+} \partial_{U_{+}}) \Sigma_{i_x} = 0$$
in three point invariants $X_1, X_2$ and $X_3$.\\

As well as there is a same process with a minor differ for when we have $j >1$ ; \\
\begin{equation}
\{ X_i , f(X_{j > i}) \} =  X_i X_j \partial_{X_j} f(X_{j > i})
\end{equation}
And also $ X_i f(X_{i}) = f(X_{i}) X_i = f(e^H X_{i}) X_i$\\
$\hspace*{60pt} = f((1 + H) X_{i}) X_i = f(X_{i} + H X_{i}) X_i$\\
$\hspace*{60pt} = f(X_{i}) X_i + X_i H f(X_{i}) = f(X_{i}) X_i + X_{i}^{2} \partial_{X_i} f(X_{i}) $\\
\begin{equation}
\hspace*{-60pt} \{ X_{i} , f(X_{i}) \} = X_{i}^{2} \partial_{X_i} f(X_{i})
\end{equation}
Now set $f=f(X_1 , X_2)=(F_{1,2}^{x})^{(-\frac{1}{2})}$, (here $(-\frac{1}{2})$ means that our polynomial is of the degree $-\frac{1}{2}$ ) then by previous definition of $\partial_{U_{+}}$, we have $\partial_{U_{+}} f =0 $.\\

Set $(F_{1,2}^{x})^{(\frac{1}{2})} = [\Sigma^X , (F_{1,2}^{x})^{(-\frac{1}{2})}]_q = ad_{\Sigma^X} (F_{1,2}^{x})^{(-\frac{1}{2})}$, then by using the previous discussion we have $(F_{1,2}^{x})^{(\frac{1}{2})} = (F_{1,2}^{x})^{(\frac{1}{2})} (U_{+} , X_1 , X_2 )$.\\
Now consider the following representation of $(sl_2)_q$;\\
$$F = \partial_{U_{+}}$$
\begin{equation}\label{Equ8}
H = U_{+} \partial_{U_{+}} + X_1 \partial_{X_{1}} + X_2 \partial_{X_{2}}
\end{equation}
$$E = U_{+}^{2} \partial_{U_{+}} + (X_{1}^{2} + X_1 X_2 + X_1 U_{+} ) \partial_{X_{1}} + (X_{2}^{2} + X_2 U_{+})\partial_{X_{2}}$$
We need the highest weight vector of this representation that is the solution of equations (\ref{Equ8}). So we should have $E (F_{1,2}^{x})^{(- \frac{1}{2})} = ad_{\Sigma^X} [\Sigma^X , (F_{1,2}^{x})^{(- \frac{1}{2})} ] =0 $. There is a solution for these equations in \cite{13}. Here we use the same solution and procedure.\\

The Idea is to suppose existing of the local fields [13] $(F_{1,k}^{x})^{(0)}, (F_{2,k+1}^{x})^{(0)}, \cdots$ (here $(0)$ means that the degree is 0) and the modules created from these local fields as follows for degrees $i$ and $j$. According to \cite{13} we try to find the exchange algebra relations
\begin{equation}
(F_{1,k}^{x})^{(i)}= [\Sigma^{X} , [ \Sigma^{X} ,[\cdots , [ \Sigma^{X} , (F_{1,k}^{x})^{(0)}]\cdots]]]  ~  ~  ~  ~ (i ~-times)
\end{equation}
And then again will use  the shift operator and will shift it once for to find another module as follows
$$(F_{2,k+1}^{x})^{(j)}= [\Sigma^{X} , [ \Sigma^{X} ,[\cdots , [ \Sigma^{X} , (F_{2,k+1}^{x})^{(0)}]\cdots]]]  ~  ~  ~  ~ (j ~-times)$$
Let us to proceed as what we planned:\\
Set $k \in \{ 2 \}$ and $i = - \frac{1}{2}$
\begin{equation}
(F_{1,2}^{x})^{(- \frac{1}{2})} = X_{1}^{\frac{1}{2}}X_{2}^{-\frac{1}{2}} (X_{1} + X_{2})^{-\frac{1}{2}}
\end{equation}
as [13]. Then \\
$$[\Sigma^{X} , (F_{1,2}^{x})^{(- \frac{1}{2})}] = [U_{-} + (X_1 + X_2) + U_{+} , X_{1}^{\frac{1}{2}}X_{2}^{-\frac{1}{2}} (X_{1} + X_{2})^{-\frac{1}{2}}] $$
$\hspace*{109pt} =  (1 - q^{-\frac{1}{2}}) U_{+} X_{1}^{\frac{1}{2}}X_{2}^{-\frac{1}{2}} (X_{1} + X_{2})^{-\frac{1}{2}} $ \cite{13}.\\
Where $U_{+}= \Sigma_{3}^{+\infty}$. Let us to call it $(F_{1,2}^{x})^{( \frac{1}{2})}$; because it's degree is $\frac{1}{2}$ and to find another module from it by using shift $X_1 \rightarrow X_3$ and $X_2 \rightarrow X_4$ as follows:\\
\begin{equation}\label{Equ9}
(F_{3,4}^{x})^{(-\frac{1}{2})} = X_{3}^{\frac{1}{2}}X_{4}^{-\frac{1}{2}} (X_{3} + X_{4})^{-\frac{1}{2}}
\end{equation}
And again in a same process we will have\\
\begin{equation}\label{Equ10}
(F_{3,4}^{x})^{(\frac{1}{2})}=[\Sigma^{X} , F_{3,4}^{x})^{(-\frac{1}{2}} ] = (1 - q^{-\frac{1}{2}}) (U_{+}- X_3- X_4) X_{3}^{\frac{1}{2}}X_{4}^{-\frac{1}{2}} (X_{3} + X_{4})^{-\frac{1}{2}} [13]
\end{equation}

Now let us multiply the equations (\ref{Equ9}) and (\ref{Equ10}) (because we need zero degree) with each other for to see what will happen?
$\vspace*{50pt}$
\begin{equation}\label{Equ11}
\hspace*{-140pt} (F_{1,2}^{x})^{(- \frac{1}{2})} (F_{3,4}^{x})^{(\frac{1}{2})}
\end{equation}
$\hspace*{35pt} = X_{1}^{\frac{1}{2}}X_{2}^{-\frac{1}{2}} (X_{1} + X_{2})^{-\frac{1}{2}} ((1 - q^{-\frac{1}{2}}) (U_{+}- X_3- X_4) X_{3}^{\frac{1}{2}}X_{4}^{-\frac{1}{2}} (X_{3} + X_{4})^{-\frac{1}{2}})$\\
$\hspace*{35pt}= X_{1}^{\frac{1}{2}}X_{2}^{-\frac{1}{2}} (X_{1} + X_{2})^{-\frac{1}{2}} U_{+} X_{3}^{\frac{1}{2}}X_{4}^{-\frac{1}{2}} (X_{3} + X_{4})^{-\frac{1}{2}}$ \\
$\hspace*{35pt} - X_{1}^{\frac{1}{2}}X_{2}^{-\frac{1}{2}} (X_{1} + X_{2})^{-\frac{1}{2}} X_{3} X_{3}^{\frac{1}{2}}X_{4}^{-\frac{1}{2}} (X_{3} + X_{4})^{-\frac{1}{2}}$\\
$\hspace*{35pt} - X_{1}^{\frac{1}{2}}X_{2}^{-\frac{1}{2}} (X_{1} + X_{2})^{-\frac{1}{2}} X_{4} X_{3}^{\frac{1}{2}}X_{4}^{-\frac{1}{2}} (X_{3} + X_{4})^{-\frac{1}{2}}$\\
$\hspace*{35pt} -q^{-\frac{1}{2}} X_{1}^{\frac{1}{2}}X_{2}^{-\frac{1}{2}} (X_{1} + X_{2})^{-\frac{1}{2}} U_{+} X_{3}^{\frac{1}{2}}X_{4}^{-\frac{1}{2}} (X_{3} + X_{4})^{-\frac{1}{2}}$ \\
$\hspace*{35pt} -q^{-\frac{1}{2}} X_{1}^{\frac{1}{2}}X_{2}^{-\frac{1}{2}} (X_{1} + X_{2})^{-\frac{1}{2}} X_{3} X_{3}^{\frac{1}{2}}X_{4}^{-\frac{1}{2}} (X_{3} + X_{4})^{-\frac{1}{2}}$\\
$\hspace*{35pt} -q^{-\frac{1}{2}} X_{1}^{\frac{1}{2}}X_{2}^{-\frac{1}{2}} (X_{1} + X_{2})^{-\frac{1}{2}} X_{4} X_{3}^{\frac{1}{2}}X_{4}^{-\frac{1}{2}} (X_{3} + X_{4})^{-\frac{1}{2}}$\\
And again let us proceed as \cite{13} and to find:
\begin{equation}\label{Equ12}
\hspace*{-140pt} -q^{\frac{1}{2}} (F_{1,2}^{x})^{( \frac{1}{2})} (F_{3,4}^{x})^{(- \frac{1}{2})}
\end{equation}
$\hspace*{35pt} = q^{\frac{1}{2}}((1 - q^{-\frac{1}{2}})U_{+}  X_{1}^{\frac{1}{2}}X_{2}^{-\frac{1}{2}} (X_{1} + X_{2})^{-\frac{1}{2}} ) X_{3}^{\frac{1}{2}}X_{4}^{-\frac{1}{2}} (X_{3} + X_{4})^{-\frac{1}{2}} $\\
$\hspace*{35pt} = -q^{\frac{1}{2}} U_{+} X_{1}^{\frac{1}{2}}X_{2}^{-\frac{1}{2}} (X_{1} + X_{2})^{-\frac{1}{2}} X_{3}^{\frac{1}{2}}X_{4}^{-\frac{1}{2}} (X_{3} + X_{4})^{-\frac{1}{2}}  $\\
$\hspace*{35pt} + U_{+} X_{1}^{\frac{1}{2}}X_{2}^{-\frac{1}{2}} (X_{1} + X_{2})^{-\frac{1}{2}} X_{3}^{\frac{1}{2}}X_{4}^{-\frac{1}{2}} (X_{3} + X_{4})^{-\frac{1}{2}}$\\
$\hspace*{35pt} = - X_{1}^{\frac{1}{2}}X_{2}^{-\frac{1}{2}} (X_{1} + X_{2})^{-\frac{1}{2}} U_{+} X_{3}^{\frac{1}{2}}X_{4}^{-\frac{1}{2}} (X_{3} + X_{4})^{-\frac{1}{2}}$\\
$\hspace*{35pt} + q^{-\frac{1}{2}} X_{1}^{\frac{1}{2}}X_{2}^{-\frac{1}{2}} (X_{1} + X_{2})^{-\frac{1}{2}} U_{+} X_{3}^{\frac{1}{2}}X_{4}^{-\frac{1}{2}} (X_{3} + X_{4})^{-\frac{1}{2}}$\\
And again by following \cite{13}, by adding the equations (\ref{Equ11}) and (\ref{Equ12}) for to find an object with zero grading that can be a four point invariant generator of lattice Virasoro algebra\\
\begin{equation}
\rho_{1,4} = (F_{1,2}^{x})^{(- \frac{1}{2})} (F_{3,4}^{x})^{(\frac{1}{2})} -q^{\frac{1}{2}} (F_{1,2}^{x})^{( \frac{1}{2})} (F_{3,4}^{x})^{(- \frac{1}{2})}
\end{equation}
$\hspace*{35pt} = -(1+q^{-\frac{1}{2}})X_{1}^{\frac{1}{2}}X_{2}^{-\frac{1}{2}} (X_{1} + X_{2})^{-\frac{1}{2}} (X_3 + X_4)X_{3}^{\frac{1}{2}}X_{4}^{-\frac{1}{2}} (X_{3} + X_{4})^{-\frac{1}{2}} $\\

But now let us follow precisely the notation from \cite{13} for to not be confused\\
\begin{equation}
\Delta_{1,3} = X_{1}^{\frac{1}{2}}X_{2}^{-\frac{1}{2}} (X_{1} + X_{2})^{-\frac{1}{2}} (X_3 + X_4)X_{3}^{\frac{1}{2}}X_{4}^{-\frac{1}{2}} (X_{3} + X_{4})^{-\frac{1}{2}}
\end{equation}
that is a four point generator of lattice Virasoro algebra, but we are not looking for such kind of solutions, because to extend it to a general form is somehow difficult. So we are looking for a simple solution. \\
Now let us to define another such kind of solutions as we experienced.
\begin{equation}
(F_{2,3}^{x})^{(-\frac{1}{2})} := X_{2}^{\frac{1}{2}}X_{3}^{-\frac{1}{2}} (X_{2} + X_{3})^{-\frac{1}{2}}
\end{equation}
\begin{equation}
(F_{2,3}^{x})^{(\frac{1}{2})} :=(1 - q^{-\frac{1}{2}})(U_{+} - X_3) X_{2}^{\frac{1}{2}}X_{3}^{-\frac{1}{2}} (X_{2} + X_{3})^{-\frac{1}{2}}
\end{equation}
then define
\begin{equation}
\rho_{1,3} = (F_{1,2}^{x})^{(- \frac{1}{2})} (F_{2,3}^{x})^{(\frac{1}{2})} - q^{\frac{1}{2}} (F_{1,2}^{x})^{( \frac{1}{2})} (F_{2,3}^{x})^{(-\frac{1}{2})}
\end{equation}
Let us calculate it;\\
\begin{equation}\label{Equ13}
(F_{1,2}^{x})^{(- \frac{1}{2})} (F_{2,3}^{x})^{(\frac{1}{2})}
\end{equation}
$\hspace*{50pt} = X_{1}^{\frac{1}{2}}X_{2}^{-\frac{1}{2}} (X_{1} + X_{2})^{-\frac{1}{2}} (1 - q^{-\frac{1}{2}})(U_{+} - X_3) X_{2}^{\frac{1}{2}}X_{3}^{-\frac{1}{2}} (X_{2} + X_{3})^{-\frac{1}{2}}$\\
$\hspace*{50pt} = X_{1}^{\frac{1}{2}}X_{2}^{-\frac{1}{2}} (X_{1} + X_{2})^{-\frac{1}{2}} (U_{+} - X_3) X_{2}^{\frac{1}{2}}X_{3}^{-\frac{1}{2}} (X_{2} + X_{3})^{-\frac{1}{2}}   $\\
$\hspace*{50pt} =  q^{-\frac{1}{2}} X_{1}^{\frac{1}{2}}X_{2}^{-\frac{1}{2}} (X_{1} + X_{2})^{-\frac{1}{2}} (U_{+} - X_3) X_{2}^{\frac{1}{2}}X_{3}^{-\frac{1}{2}} (X_{2} + X_{3})^{-\frac{1}{2}}  $\\
$\hspace*{50pt} = X_{1}^{\frac{1}{2}}X_{2}^{-\frac{1}{2}} (X_{1} + X_{2})^{-\frac{1}{2}} U_{+}  X_{2}^{\frac{1}{2}}X_{3}^{-\frac{1}{2}} (X_{2} + X_{3})^{-\frac{1}{2}} $\\
$\hspace*{50pt} - X_{1}^{\frac{1}{2}}X_{2}^{-\frac{1}{2}} (X_{1} + X_{2})^{-\frac{1}{2}} X_{3}  X_{2}^{\frac{1}{2}}X_{3}^{-\frac{1}{2}} (X_{2} + X_{3})^{-\frac{1}{2}} $\\
$\hspace*{50pt} - q^{-\frac{1}{2}} X_{1}^{\frac{1}{2}}X_{2}^{-\frac{1}{2}} (X_{1} + X_{2})^{-\frac{1}{2}} U_{+}  X_{2}^{\frac{1}{2}}X_{3}^{-\frac{1}{2}} (X_{2} + X_{3})^{-\frac{1}{2}} $\\
$\hspace*{50pt} + q^{-\frac{1}{2}} X_{1}^{\frac{1}{2}}X_{2}^{-\frac{1}{2}} (X_{1} + X_{2})^{-\frac{1}{2}} X_{3}  X_{2}^{\frac{1}{2}}X_{3}^{-\frac{1}{2}} (X_{2} + X_{3})^{-\frac{1}{2}} $\\
Now let us calculate the other part
\begin{equation}\label{Equ14}
- q^{\frac{1}{2}} (F_{1,2}^{x})^{( \frac{1}{2})} (F_{2,3}^{x})^{(-\frac{1}{2})}
\end{equation}
$\hspace*{50pt} = - q^{\frac{1}{2}} (1 - q^{-\frac{1}{2}}) U_{+} X_{1}^{\frac{1}{2}}X_{2}^{-\frac{1}{2}} (X_{1} + X_{2})^{-\frac{1}{2}} X_{2}^{\frac{1}{2}}X_{3}^{-\frac{1}{2}} (X_{2} + X_{3})^{-\frac{1}{2}} $\\
$\hspace*{50pt} = - q^{\frac{1}{2}} U_{+} X_{1}^{\frac{1}{2}}X_{2}^{-\frac{1}{2}} (X_{1} + X_{2})^{-\frac{1}{2}} X_{2}^{\frac{1}{2}}X_{3}^{-\frac{1}{2}} (X_{2} + X_{3})^{-\frac{1}{2}} $ \\
$\hspace*{50pt} + U_{+} X_{1}^{\frac{1}{2}}X_{2}^{-\frac{1}{2}} (X_{1} + X_{2})^{-\frac{1}{2}} X_{2}^{\frac{1}{2}}X_{3}^{-\frac{1}{2}} (X_{2} + X_{3})^{-\frac{1}{2}}$\\
$\hspace*{50pt} = - X_{1}^{\frac{1}{2}}X_{2}^{-\frac{1}{2}} (X_{1} + X_{2})^{-\frac{1}{2}} U_{+} X_{2}^{\frac{1}{2}}X_{3}^{-\frac{1}{2}} (X_{2} + X_{3})^{-\frac{1}{2}}$\\
$\hspace*{50pt} + q^{-\frac{1}{2}} X_{1}^{\frac{1}{2}}X_{2}^{-\frac{1}{2}} (X_{1} + X_{2})^{-\frac{1}{2}} U_{+} X_{2}^{\frac{1}{2}}X_{3}^{-\frac{1}{2}} (X_{2} + X_{3})^{-\frac{1}{2}}$\\
And again by following \cite{13}, by adding the equations (\ref{Equ13}) and (\ref{Equ14}), we have another generator of lattice Virasoro algebra, but of three point invariant:
\begin{equation}
\rho_{1,3} = (q^{-\frac{1}{2}} -1)X_{1}^{\frac{1}{2}}X_{2}^{-\frac{1}{2}} (X_{1} + X_{2})^{-\frac{1}{2}} X_{3} X_{2}^{\frac{1}{2}}X_{3}^{-\frac{1}{2}} (X_{2} + X_{3})^{-\frac{1}{2}}
\end{equation}
But now let us follow precisely the notation from \cite{13} for to not be confused\\
\begin{equation}\label{Equ15}
\Delta_{1,2} = X_{1}^{\frac{1}{2}}X_{2}^{-\frac{1}{2}} (X_{1} + X_{2})^{-\frac{1}{2}} X_{3} X_{2}^{\frac{1}{2}}X_{3}^{-\frac{1}{2}} (X_{2} + X_{3})^{-\frac{1}{2}}
\end{equation}
\begin{equation}\label{Equ16}
{(\Delta_{1,3})}^{-1} =  (X_{3} + X_{4})^{\frac{1}{2}} X_{4}^{\frac{1}{2}} X_{3}^{-\frac{1}{2}}  (X_3 + X_4)^{-1}   (X_{1} + X_{2})^{\frac{1}{2}} X_{2}^{\frac{1}{2}} X_{1}^{-\frac{1}{2}}
\end{equation}
is again a solution. \\
Now let us multiply the equations (\ref{Equ15}) and (\ref{Equ16}) for to see what will happen?\\
${(\Delta_{1,3})}^{-1} \Delta_{1,2} =  $\\
$ (X_{3} + X_{4})^{\frac{1}{2}} X_{4}^{\frac{1}{2}} X_{3}^{-\frac{1}{2}}  (X_3 + X_4)^{-1}   (X_{1} + X_{2})^{\frac{1}{2}} X_{2}^{\frac{1}{2}} X_{1}^{-\frac{1}{2}} X_{1}^{\frac{1}{2}}X_{2}^{-\frac{1}{2}} (X_{1} + X_{2})^{-\frac{1}{2}} X_{3} X_{2}^{\frac{1}{2}}X_{3}^{-\frac{1}{2}} (X_{2} + X_{3})^{-\frac{1}{2}} $\\
$= (X_{3} + X_{4})^{\frac{1}{2}} X_{4}^{\frac{1}{2}} X_{3}^{-\frac{1}{2}}  (X_3 + X_4)^{-1} X_3 X_{2}^{\frac{1}{2}}X_{3}^{-\frac{1}{2}} (X_{2} + X_{3})^{-\frac{1}{2}}$\\
by using the equations (\ref{Equ15}) and (\ref{Equ16}) we have the following result which has been mentioned in [13]:
\begin{equation}
\Sigma = (x_3 + x_4)^{-\frac{1}{2}} x_{4}^{\frac{1}{2}} x_{3}^{\frac{1}{2}} (x_2 + x_3)^{-\frac{1}{2}}
\end{equation}

Our next goal is to prove that $\Sigma$ is a generator for lattice Virasoro algebra.\\

Now let us to set some $q-$commutation relations such that any other relation comes from them by using the inverse and multiplication operators.
\[
\begin{cases}
X_i X_j = q X_j X_i       \\
X_{i} X_{j}^{-1} = q^{-1} X_{j}^{-1} X_{i} \\
X_i X_{j}^{-\frac{1}{2}} = q^{-\frac{1}{2}} X_{j}^{-\frac{1}{2}} X_i  \\
X_i X_{j}^{\frac{1}{2}} = q^{\frac{1}{2}} X_{j}^{-\frac{1}{2}} X_i  \\
X_{j}^{\frac{1}{2}} X_{i}^{-1}  = q^{\frac{1}{2}} X_{i}^{-1} X_{j}^{\frac{1}{2}}  \\
X_{j}^{\frac{1}{2}} X_{i}  = q^{-\frac{1}{2}} X_{i} X_{j}^{\frac{1}{2}} \\
X_{j}^{-\frac{1}{2}} X_{i}^{-1}  = q^{-\frac{1}{2}} X_{i}^{-1} X_{j}^{\frac{1}{2}}  \\
X_{j} X_{i}^{-\frac{1}{2}} = q^{\frac{1}{2}} X_{i}^{-\frac{1}{2}} X_{j}\\
X_{i}^{\frac{1}{2}} X_{j}^{\frac{1}{2}}  = q^{\frac{1}{4}} X_{j}^{\frac{1}{2}} X_{i}^{\frac{1}{2}}  \\
X_{i}^{\frac{1}{2}} X_{j}^{-\frac{1}{2}}  = q^{-\frac{1}{4}} X_{j}^{-\frac{1}{2}} X_{i}^{\frac{1}{2}}  \\
X_{i}^{-\frac{1}{2}} X_{j}^{\frac{1}{2}}  = q^{-\frac{1}{4}} X_{j}^{\frac{1}{2}} X_{i}^{-\frac{1}{2}}  \\
X_{j} X_{i}^{\frac{1}{2}}  = q^{-\frac{1}{2}} X_{i}^{\frac{1}{2}} X_{j} \\
X_{i}^{\frac{1}{2}} X_{j}^{-1}  = q^{-\frac{1}{2}} X_{j}^{-1} X_{i}^{\frac{1}{2}} \\
X_{i} X_{j}^{-\frac{1}{2}}  = q^{\frac{1}{2}}  X_{j}^{-\frac{1}{2}} X_{i} \\
\end{cases}
\]

Set $\Sigma^x= \Sigma_{j= -\infty}^{j = +\infty} x_j $. We want to show that $\Sigma^X$ will commute with $\Sigma = (x_3 + x_4)^{-\frac{1}{2}} x_{4}^{\frac{1}{2}} x_{3}^{\frac{1}{2}} (x_2 + x_3)^{-\frac{1}{2}}$ by using the usual commutator$[x , y] = xy - yx$, i.e. to show that the equation $ [\Sigma^x , \Sigma ] \stackrel{?}{=} 0$ is correct.  \\

For to show it, one can easily check
that the contributions of many entries vanishes.
Namely, the elements $X_j$ with indexes $j$  from minus infinity to $1$ and from $5$ to plus infinity
definitely commute due to the rules mentioned above. And after that we can concentrate on the sum $ x_2 + x_3 + x_4 $. \\
So lets do it,\\
$\hspace*{35pt} [x_2 + x_3 + x_4 , (x_3 + x_4)^{-\frac{1}{2}} x_{4}^{\frac{1}{2}} x_{3}^{\frac{1}{2}} (x_2 + x_3)^{-\frac{1}{2}} ] $ \\
$\hspace*{35pt} \stackrel{?}{=} (x_2 + x_3 + x_4)((x_3 + x_4)^{-\frac{1}{2}} x_{4}^{\frac{1}{2}} x_{3}^{\frac{1}{2}} (x_2 + x_3)^{-\frac{1}{2}}) - ((x_3 + x_4)^{-\frac{1}{2}} x_{4}^{\frac{1}{2}} x_{3}^{\frac{1}{2}} (x_2 + $\\
$\hspace*{46pt} x_3)^{-\frac{1}{2}})(x_2 + x_3 + x_4)$\\
for to do this job, let us divide the project to the following small projects. We must demonstrate that the following equations are satisfying.
\begin{equation}\label{Equ17}
(x_2)((x_3 + x_4)^{-\frac{1}{2}} x_{4}^{\frac{1}{2}} x_{3}^{\frac{1}{2}} (x_2 + x_3)^{-\frac{1}{2}}) \stackrel{?}{=} ((x_3 + x_4)^{-\frac{1}{2}} x_{4}^{\frac{1}{2}} x_{3}^{\frac{1}{2}} (x_2 + x_3)^{-\frac{1}{2}})(x_2)
\end{equation}
\begin{equation}
(x_3)((x_3 + x_4)^{-\frac{1}{2}} x_{4}^{\frac{1}{2}} x_{3}^{\frac{1}{2}} (x_2 + x_3)^{-\frac{1}{2}})\stackrel{?}{=}((x_3 + x_4)^{-\frac{1}{2}} x_{4}^{\frac{1}{2}} x_{3}^{\frac{1}{2}} (x_2 + x_3)^{-\frac{1}{2}})(x_3)
\end{equation}
\begin{equation}
(x_4)((x_3 + x_4)^{-\frac{1}{2}} x_{4}^{\frac{1}{2}} x_{3}^{\frac{1}{2}} (x_2 + x_3)^{-\frac{1}{2}})\stackrel{?}{=}((x_3 + x_4)^{-\frac{1}{2}} x_{4}^{\frac{1}{2}} x_{3}^{\frac{1}{2}} (x_2 + x_3)^{-\frac{1}{2}})(x_4)
\end{equation}

Let start with equation $(\ref{Equ17})$:\\
$\hspace*{46pt}  (x_2)((x_3 + x_4)^{-\frac{1}{2}} x_{4}^{\frac{1}{2}} x_{3}^{\frac{1}{2}} (x_2 + x_3)^{-\frac{1}{2}}) \stackrel{?}{=} ((x_3 + x_4)^{-\frac{1}{2}} x_{4}^{\frac{1}{2}} x_{3}^{\frac{1}{2}} (x_2 + x_3)^{-\frac{1}{2}})(x_2)$\\
multiply the equation from the left side with $(x_3 + x_4)^{\frac{1}{2}}$.\\
$\hspace*{46pt} (x_3 + x_4)^{\frac{1}{2}}  x_2 (x_3 + x_4)^{-\frac{1}{2}} x_{4}^{\frac{1}{2}} x_{3}^{\frac{1}{2}} (x_2 + x_3)^{-\frac{1}{2}} \stackrel{?}{=}  x_{4}^{\frac{1}{2}} x_{3}^{\frac{1}{2}} (x_2 + x_3)^{-\frac{1}{2}} x_2 $\\
Now we have two options for $x_2$ to move, it can go to the left direction and act on the $(x_3 + x_4)^{\frac{1}{2}} $ or to the right direction and act on the $(x_3 + x_4)^{-\frac{1}{2}} $. \\
As we see, in $(x_3 + x_4)^{-\frac{1}{2}} $, our summation will take part from $k=0$ to $k= +\infty$. We can use of this fact for to skip some factors in the powers of $q$ that will appear in our calculation when that our partners in action are not two.\\
In the case of $x_2$, in the left hand side we have no problem in our action; because $x_2$ will act on $x_3$ and $x_4$ and we have two different partners in our action. So we can move $x_2$ to the left hand side and act it on $(x_3 + x_4)^{\frac{1}{2}}$:\\
$ (\Sigma_{k=0}^{\frac{1}{2}} {\frac{1}{2} \choose k}_q x_{4}^{\frac{1}{2} - k} x_{3}^{k} )  x_2 (x_3 + x_4)^{-\frac{1}{2}} x_{4}^{\frac{1}{2}} x_{3}^{\frac{1}{2}} (x_2 + x_3)^{-\frac{1}{2}} \stackrel{?}{=}  x_{4}^{\frac{1}{2}} x_{3}^{\frac{1}{2}} (x_2 + x_3)^{-\frac{1}{2}} x_2$\\
$ x_2 (\Sigma_{k=0}^{\frac{1}{2}} {\frac{1}{2} \choose k}_q q^{-\frac{1}{2} - k}x_{4}^{\frac{1}{2} - k} q^{-k} x_{3}^{k} )   (x_3 + x_4)^{-\frac{1}{2}} x_{4}^{\frac{1}{2}} x_{3}^{\frac{1}{2}} (x_2 + x_3)^{-\frac{1}{2}} \stackrel{?}{=}  x_{4}^{\frac{1}{2}} x_{3}^{\frac{1}{2}} (x_2 + x_3)^{-\frac{1}{2}} x_2$\\
$ x_2 q^{-\frac{1}{2}} (\Sigma_{k=0}^{\frac{1}{2}} {\frac{1}{2} \choose k}_q x_{4}^{\frac{1}{2} - k} x_{3}^{k} )   (x_3 + x_4)^{-\frac{1}{2}} x_{4}^{\frac{1}{2}} x_{3}^{\frac{1}{2}} (x_2 + x_3)^{-\frac{1}{2}} \stackrel{?}{=} x_{4}^{\frac{1}{2}} x_{3}^{\frac{1}{2}} (x_2 + x_3)^{-\frac{1}{2}} x_2$\\
$ q^{-\frac{1}{2}} x_2   x_{4}^{\frac{1}{2}} x_{3}^{\frac{1}{2}} (x_2 + x_3)^{-\frac{1}{2}} \stackrel{?}{=}  x_{4}^{\frac{1}{2}} x_{3}^{\frac{1}{2}} (x_2 + x_3)^{-\frac{1}{2}} x_2$\\
Now multiply both side of the equation from the right hand side by $(x_2 + x_3)^{\frac{1}{2}}$; we will have:\\
$ q^{-\frac{1}{2}} x_2   x_{4}^{\frac{1}{2}} x_{3}^{\frac{1}{2}} \stackrel{?}{=}  x_{4}^{\frac{1}{2}} x_{3}^{\frac{1}{2}} (x_2 + x_3)^{-\frac{1}{2}} x_2 (x_2 + x_3)^{\frac{1}{2}}$\\
In the right hand side, we have just one partner in action, i.e. we just have the action of $x_2$ on $x_3$; So there will be a factor of the power of $q$. And as I mentioned it already for to skip this factor we will use of limit in infinity. So $x_2$ should act on $(x_2 + x_3)^{-\frac{1}{2}}$. Lets see what will happen; (here we have to use the equation (\ref{Equ6}) for to expand $(x_2 + x_3)^{-\frac{1}{2}}$):\\
$q^{-\frac{1}{2}} x_2   x_{4}^{\frac{1}{2}} x_{3}^{\frac{1}{2}} \stackrel{?}{=}  x_{4}^{\frac{1}{2}} x_{3}^{\frac{1}{2}} (\Sigma_{k= - \infty}^{0} (-1)^{k} q^{\frac{k(k+1)}{2}} {-\frac{3}{2}-k \choose -k}_{q}  x_{3}^{-\frac{1}{2}+k} x_{2}^{-k} ) x_2 (x_2 + x_3)^{\frac{1}{2}}$\\
$q^{-\frac{1}{2}} x_2   x_{4}^{\frac{1}{2}} x_{3}^{\frac{1}{2}} \stackrel{?}{=}  x_{4}^{\frac{1}{2}} x_{3}^{\frac{1}{2}} (\Sigma_{k= - \infty}^{0} (-1)^{k} q^{\frac{k(k+1)}{2}} {-\frac{3}{2}-k \choose -k}_{q} q^{\frac{1}{2} - k}x_{3}^{-\frac{1}{2}+k} x_{2}^{-k} ) x_2 (x_2 + x_3)^{\frac{1}{2}}$\\
By using the fact that when $k \rightarrow +\infty$ then $q^{\frac{1}{2} - k} \rightarrow q^{\frac{1}{2}}$ we have:\\
$q^{-\frac{1}{2}} x_2   x_{4}^{\frac{1}{2}} x_{3}^{\frac{1}{2}} \stackrel{?}{=}  x_{4}^{\frac{1}{2}} x_{3}^{\frac{1}{2}} q^{\frac{1}{2}}  x_2 (x_2 + x_3)^{\frac{1}{2}}$\\
$q^{-\frac{1}{2}} x_2   x_{4}^{\frac{1}{2}} x_{3}^{\frac{1}{2}} \stackrel{?}{=} q^{\frac{1}{2}} x_{4}^{\frac{1}{2}} x_{3}^{\frac{1}{2}}   x_2 (x_2 + x_3)^{\frac{1}{2}}$\\
$q^{-\frac{1}{2}} x_2   x_{4}^{\frac{1}{2}} x_{3}^{\frac{1}{2}} \stackrel{?}{=} q^{\frac{1}{2}} x_2 q^{-\frac{1}{2}} x_{4}^{\frac{1}{2}} q^{-\frac{1}{2}} x_{3}^{\frac{1}{2}}    (x_2 + x_3)^{\frac{1}{2}}$\\
$q^{-\frac{1}{2}} x_2   x_{4}^{\frac{1}{2}} x_{3}^{\frac{1}{2}} = q^{-\frac{1}{2}} x_2  x_{4}^{\frac{1}{2}}  x_{3}^{\frac{1}{2}}   $\\
So it's correct in the case of $x_2$.\\
The case of $x_3$ and $x_4$ are almost identical to the case of $x_2$ with just some minor differs.\\

Lets do it for $x_3$;\\
$(x_3)((x_3 + x_4)^{-\frac{1}{2}} x_{4}^{\frac{1}{2}} x_{3}^{\frac{1}{2}} (x_2 + x_3)^{-\frac{1}{2}})\stackrel{?}{=}((x_3 + x_4)^{-\frac{1}{2}} x_{4}^{\frac{1}{2}} x_{3}^{\frac{1}{2}} (x_2 + x_3)^{-\frac{1}{2}})(x_3)$\\
multiply the equation with $(x_3 + x_4)^{\frac{1}{2}}$ from the left hand side;\\
$(x_3 + x_4)^{\frac{1}{2}} x_3(x_3 + x_4)^{-\frac{1}{2}} x_{4}^{\frac{1}{2}} x_{3}^{\frac{1}{2}} (x_2 + x_3)^{-\frac{1}{2}} \stackrel{?}{=} x_{4}^{\frac{1}{2}} x_{3}^{\frac{1}{2}} (x_2 + x_3)^{-\frac{1}{2}}x_3$\\
$(x_3 + x_4)^{\frac{1}{2}} x_3(\Sigma_{k=0}^{+ \infty} (-1)^{k} q^{{k \choose 2}} {-\frac{3}{2} \choose k}_{q^{-1}} x_{4}^{-\frac{1}{2} - k} x_{3}^{k} ) x_{4}^{\frac{1}{2}} x_{3}^{\frac{1}{2}} (x_2 + x_3)^{-\frac{1}{2}} \stackrel{?}{=} x_{4}^{\frac{1}{2}} x_{3}^{\frac{1}{2}} (x_2 + x_3)^{-\frac{1}{2}}x_3$\\
$(x_3 + x_4)^{\frac{1}{2}} (\Sigma_{k=0}^{+ \infty} (-1)^{k} q^{{k \choose 2}} {-\frac{3}{2} \choose k}_{q^{-1}} q^{-\frac{1}{2} - k}x_{4}^{-\frac{1}{2} - k} q^k x_{3}^{k} ) x_3 x_{4}^{\frac{1}{2}} x_{3}^{\frac{1}{2}} (x_2 + x_3)^{-\frac{1}{2}} \stackrel{?}{=} x_{4}^{\frac{1}{2}} x_{3}^{\frac{1}{2}} (x_2 + x_3)^{-\frac{1}{2}}x_3$\\
By using the fact that when $k \rightarrow +\infty$ then $q^{-\frac{1}{2} - k} \rightarrow q^{-\frac{1}{2}}$ we have:\\
$q^{-\frac{1}{2}} x_3 x_{4}^{\frac{1}{2}} x_{3}^{\frac{1}{2}} (x_2 + x_3)^{-\frac{1}{2}} \stackrel{?}{=} x_{4}^{\frac{1}{2}} x_{3}^{\frac{1}{2}} (x_2 + x_3)^{-\frac{1}{2}}x_3$\\
multiply the equation with $(x_2 + x_3)^{\frac{1}{2}}$ from the right hand side;\\
$q^{-\frac{1}{2}} x_3 x_{4}^{\frac{1}{2}} x_{3}^{\frac{1}{2}}  \stackrel{?}{=} x_{4}^{\frac{1}{2}} x_{3}^{\frac{1}{2}} (x_2 + x_3)^{-\frac{1}{2}}x_3 (x_2 + x_3)^{\frac{1}{2}}$\\
$q^{-\frac{1}{2}} x_3 x_{4}^{\frac{1}{2}} x_{3}^{\frac{1}{2}}  \stackrel{?}{=} x_{4}^{\frac{1}{2}} x_{3}^{\frac{1}{2}} (\Sigma_{k=0}^{+ \infty} (-1)^{k} q^{{k \choose 2}} {-\frac{3}{2} \choose k}_{q^{-1}} x_{3}^{-\frac{1}{2} - k} x_{2}^{k} )x_3 (x_2 + x_3)^{\frac{1}{2}}$\\
here again we have to change our equation to  (\ref{Equ6});\\
$q^{-\frac{1}{2}} x_3 x_{4}^{\frac{1}{2}} x_{3}^{\frac{1}{2}}  \stackrel{?}{=} x_{4}^{\frac{1}{2}} x_{3}^{\frac{1}{2}}  (\Sigma_{k= - \infty}^{0} (-1)^{k} q^{\frac{k(k+1)}{2}} {-\frac{3}{2}-k \choose -k}_{q}  x_{3}^{-\frac{1}{2}+k} x_{2}^{-k} ) x_3 (x_2 + x_3)^{\frac{1}{2}}$\\
$q^{-\frac{1}{2}} x_3 x_{4}^{\frac{1}{2}} x_{3}^{\frac{1}{2}}  \stackrel{?}{=} x_{4}^{\frac{1}{2}} x_{3}^{\frac{1}{2}} x_3 (\Sigma_{k= - \infty}^{0} (-1)^{k} q^{\frac{k(k+1)}{2}} {-\frac{3}{2}-k \choose -k}_{q}  x_{3}^{-\frac{1}{2}+k} q^{-k} x_{2}^{-k} )  (x_2 + x_3)^{\frac{1}{2}}$\\
By using the fact that when $k \rightarrow +\infty$ then $q^{ - k} \rightarrow 1$ we have:\\
$q^{-\frac{1}{2}} x_3 x_{4}^{\frac{1}{2}} x_{3}^{\frac{1}{2}}  \stackrel{?}{=} x_{4}^{\frac{1}{2}} x_{3}^{\frac{1}{2}} x_3 $\\
$q^{-\frac{1}{2}} x_3 x_{4}^{\frac{1}{2}} x_{3}^{\frac{1}{2}}  = q^{-\frac{1}{2}} x_3 x_{4}^{\frac{1}{2}} x_{3}^{\frac{1}{2}}  $\\

Lets do it for $x_4$;\\
$(x_4)((x_3 + x_4)^{-\frac{1}{2}} x_{4}^{\frac{1}{2}} x_{3}^{\frac{1}{2}} (x_2 + x_3)^{-\frac{1}{2}})\stackrel{?}{=}((x_3 + x_4)^{-\frac{1}{2}} x_{4}^{\frac{1}{2}} x_{3}^{\frac{1}{2}} (x_2 + x_3)^{-\frac{1}{2}})(x_4)$\\
multiply the equation with $(x_3 + x_4)^{\frac{1}{2}}$ from the left hand side;\\
$(x_3 + x_4)^{\frac{1}{2}} x_4(x_3 + x_4)^{-\frac{1}{2}} x_{4}^{\frac{1}{2}} x_{3}^{\frac{1}{2}} (x_2 + x_3)^{-\frac{1}{2}} \stackrel{?}{=}  x_{4}^{\frac{1}{2}} x_{3}^{\frac{1}{2}} (x_2 + x_3)^{-\frac{1}{2}} x_4$\\
$(x_3 + x_4)^{\frac{1}{2}} x_4 (\Sigma_{k=0}^{+ \infty} (-1)^{k} q^{{k \choose 2}} {-\frac{3}{2} \choose k}_{q^{-1}} x_{4}^{-\frac{1}{2} - k} x_{3}^{k} ) x_{4}^{\frac{1}{2}} x_{3}^{\frac{1}{2}} (x_2 + x_3)^{-\frac{1}{2}} \stackrel{?}{=}  x_{4}^{\frac{1}{2}} x_{3}^{\frac{1}{2}} (x_2 + x_3)^{-\frac{1}{2}} x_4$\\
$(x_3 + x_4)^{\frac{1}{2}}  (\Sigma_{k=0}^{+ \infty} (-1)^{k} q^{{k \choose 2}} {-\frac{3}{2} \choose k}_{q^{-1}} x_{4}^{-\frac{1}{2} - k} q^{-k} x_{3}^{k} ) x_4 x_{4}^{\frac{1}{2}} x_{3}^{\frac{1}{2}} (x_2 + x_3)^{-\frac{1}{2}} \stackrel{?}{=}  x_{4}^{\frac{1}{2}} x_{3}^{\frac{1}{2}} (x_2 + x_3)^{-\frac{1}{2}} x_4$\\
By using the fact that when  $k \rightarrow +\infty$ then $q^{ - k} \rightarrow 1$ we have:\\
$x_4 x_{4}^{\frac{1}{2}} x_{3}^{\frac{1}{2}} (x_2 + x_3)^{-\frac{1}{2}} \stackrel{?}{=}  x_{4}^{\frac{1}{2}} x_{3}^{\frac{1}{2}} (x_2 + x_3)^{-\frac{1}{2}} x_4$\\
multiply the equation with $(x_2 + x_3)^{\frac{1}{2}}$ from the left hand side;\\
$x_4 x_{4}^{\frac{1}{2}} x_{3}^{\frac{1}{2}}  \stackrel{?}{=}  x_{4}^{\frac{1}{2}} x_{3}^{\frac{1}{2}} (x_2 + x_3)^{-\frac{1}{2}} x_4 (x_2 + x_3)^{\frac{1}{2}}$\\
$x_4 x_{4}^{\frac{1}{2}} x_{3}^{\frac{1}{2}}  \stackrel{?}{=}  x_{4}^{\frac{1}{2}} x_{3}^{\frac{1}{2}} (x_2 + x_3)^{-\frac{1}{2}} x_4 (\Sigma_{k=0}^{\frac{1}{2}} {\frac{1}{2} \choose k}_q x_{3}^{\frac{1}{2} - k} x_{2}^{k} )$\\
$x_4 x_{4}^{\frac{1}{2}} x_{3}^{\frac{1}{2}}  \stackrel{?}{=}  x_{4}^{\frac{1}{2}} x_{3}^{\frac{1}{2}} (x_2 + x_3)^{-\frac{1}{2}}  (\Sigma_{k=0}^{\frac{1}{2}} {\frac{1}{2} \choose k}_q q^{-\frac{1}{2} + k} x_{3}^{\frac{1}{2} - k} q^{-k} x_{2}^{k} ) x_4$\\
$x_4 x_{4}^{\frac{1}{2}} x_{3}^{\frac{1}{2}}  \stackrel{?}{=}  x_{4}^{\frac{1}{2}} x_{3}^{\frac{1}{2}} (x_2 + x_3)^{-\frac{1}{2}} q^{-\frac{1}{2}} (\Sigma_{k=0}^{\frac{1}{2}} {\frac{1}{2} \choose k}_q  x_{3}^{\frac{1}{2} - k}  x_{2}^{k} ) x_4$\\
$x_4 x_{4}^{\frac{1}{2}} x_{3}^{\frac{1}{2}}  \stackrel{?}{=}  x_{4}^{\frac{1}{2}} x_{3}^{\frac{1}{2}}q^{-\frac{1}{2}}  x_4$\\
$x_4 x_{4}^{\frac{1}{2}} x_{3}^{\frac{1}{2}}  \stackrel{?}{=} q^{-\frac{1}{2}} x_4 x_{4}^{\frac{1}{2}} q^{\frac{1}{2}} x_{3}^{\frac{1}{2}}  $\\
$x_4 x_{4}^{\frac{1}{2}} x_{3}^{\frac{1}{2}}  =  x_4 x_{4}^{\frac{1}{2}} x_{3}^{\frac{1}{2}}  $  \\
And then again by using the shift operators (\ref{Equ5}), we will have the set of generators $\Sigma_{i_x}$ for our lattice Virasoro algebra connected to $U_q(sl_2)$.
\subsection*{Generators of lattice Virasoro algebra coming from 2-dimensional representation of $sl_2$}
\begin{cl}
	$[\Sigma_{j= -\infty}^{j = +\infty} x_j , (x_3 + x_4)^{-1} x_{4} x_{3} (x_2 + x_3)^{-1}]=0$
\end{cl}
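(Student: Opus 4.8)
The plan is to repeat, with integer exponents in place of the half-integer ones, the three-step computation that was just carried out for $\Sigma = (x_3+x_4)^{-\frac12}x_4^{\frac12}x_3^{\frac12}(x_2+x_3)^{-\frac12}$. Write $\Omega = (x_3+x_4)^{-1}x_4x_3(x_2+x_3)^{-1}$ for the element in question. First I would record that $\deg\Omega = -1+1+1-1 = 0$. As in the passage computing $[U_{\pm},(F)^{(0)}]$ in the subsection on extending to four points, an index $x_j$ with $j\le 1$ is smaller than every variable occurring in $\Omega$, so $x_j\Omega = q^{\deg\Omega}\Omega x_j = \Omega x_j$, and an index $x_j$ with $j\ge 5$ is larger than every such variable, so $x_j\Omega = q^{-\deg\Omega}\Omega x_j = \Omega x_j$. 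Hence the two infinite tails $U_- = \sum_{j\le 1}x_j$ and $U_+=\sum_{j\ge 5}x_j$ drop out of the commutator, and $[\Sigma^x,\Omega]=0$ reduces to $[x_2+x_3+x_4,\Omega]=0$.

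Next I would split this into the three separate identities $[x_2,\Omega]=0$, $[x_3,\Omega]=0$, $[x_4,\Omega]=0$, in analogy with the split (3.34)--(3.36), and treat each one by the same device used for $\Sigma$: multiply through on the left by $(x_3+x_4)$ and on the right by $(x_2+x_3)$ to clear the two inverse factors the given $x_i$ does not interact with, then expand the single surviving inverse factor via the negative-power $q$-binomial expansions (3.2)--(3.3), push $x_i$ across that series, and use the large-$k$ limiting value of the resulting $k$-dependent power of $q$ (exactly as in the steps ``$q^{-\frac12-k}\to q^{-\frac12}$'' and ``$q^{-k}\to 1$'' of the preceding proof) to collapse the series and cancel. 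The only change from the displayed half-integer computation is bookkeeping: each exponent $\pm\frac12$ becomes $\pm 1$, so the scalar $q$-powers appearing are the doubled versions of those in the $\Sigma$ case, for instance $x_2(x_3+x_4)^{-1}=q^{-1}(x_3+x_4)^{-1}x_2$ and $x_2x_4x_3 = q^2x_4x_3x_2$.

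The step I expect to be the main obstacle is the middle variable $x_3$: unlike $x_2$, which meets only $(x_2+x_3)^{-1}$, and $x_4$, which meets only $(x_3+x_4)^{-1}$, the variable $x_3$ occurs in both inverse factors and also as the explicit letter in $x_4x_3$, so its identity forces an expansion and reabsorption on both sides and requires the two separate limiting $q$-powers to cancel simultaneously; this is where the doubled exponents must be tracked most carefully, and it is the delicate point on which the whole verification turns. I would also note that the tempting shortcut of deducing the claim from the already-verified invariance of $\Sigma$ by ``squaring'' is not available, since $\Omega\neq\Sigma^2$ because the factors of $\Sigma$ do not commute; hence the direct three-case verification above is the route I would follow. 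Once the three identities hold, the claim follows, and the shift operators (3.1) then produce the remaining generators of the lattice Virasoro algebra.
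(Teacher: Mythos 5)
Your proposal is correct and is essentially the paper's own argument: the paper's proof of this claim consists of the single remark that it is ``identical to the proof for fractional exponent $\frac{1}{2}$, above,'' and what you describe --- dropping the infinite tails by the degree-zero observation, splitting into the three identities for $x_2,x_3,x_4$, clearing one inverse factor on each side, and expanding the remaining one with the negative-power $q$-binomial series and the limiting $q$-power device --- is exactly that adaptation, with the exponents doubled as you note.
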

\begin{proof}
	The proof is identical to the proof for fractional exponent $\frac{1}{2}$,above.
\end{proof}
\begin{cl}
	$[\Sigma_{j= -\infty}^{j = +\infty} x_j , (x_2 + x_3 + x_4)^{-1} (x_3 + x_4)  x_{2} (x_1 + x_2)^{-1}]=0$
\end{cl}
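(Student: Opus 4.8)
The plan is to copy, step for step, the verification already carried out for the half-integer generator $\Sigma=(x_3+x_4)^{-\frac12}x_4^{\frac12}x_3^{\frac12}(x_2+x_3)^{-\frac12}$ and for the integer-exponent generator of the preceding claim. Write $\Sigma'=(x_2+x_3+x_4)^{-1}(x_3+x_4)\,x_2\,(x_1+x_2)^{-1}$. The first step is to note that $\Sigma'$ is homogeneous of degree $(-1)+(+1)+(+1)+(-1)=0$, so that its expansion into monomials in $x_1,\dots,x_4$ (using the negative-exponent expansions (3.2)--(3.3) for the two inverse blocks) consists entirely of degree-$0$ terms. Hence any $x_j$ with $j\le 0$ or $j\ge 5$ may be moved across $\Sigma'$ at the cost of a single factor $q^{\pm\deg\Sigma'}=q^0=1$, so all such terms drop out of the commutator. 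This reduces the infinite commutator $[\,\sum_{j=-\infty}^{+\infty}x_j,\Sigma'\,]$ to the finite identity $[\,x_1+x_2+x_3+x_4,\Sigma'\,]=0$, which I would establish by proving the four separate equalities $x_i\Sigma'=\Sigma' x_i$ for $i=1,2,3,4$.

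For each $i$ the procedure is the one used above: clear the two flanking inverse blocks by multiplying the desired equality on the left by $(x_2+x_3+x_4)$ and on the right by $(x_1+x_2)$, reducing the problem to a monomial identity, and then transport $x_i$ through each surviving factor using the $q$-commutation relations tabulated above. When $x_i$ meets a block all of whose summands carry an index different from $i$ --- for example $x_1$ against $(x_3+x_4)$, or $x_3$ and $x_4$ against the central $x_2$ --- the exchange factor is uniform and the powers of $q$ combine without any series manipulation. When instead $x_i$ meets an inverse block in which $x_i$ itself appears, I would expand that block by (3.2)--(3.3) and apply the same limiting device used in the $\frac12$-computation, namely that the $k$-dependent powers $q^{\pm k}$ occurring inside the expansion stabilize as $k\to\infty$, so that the entire series contributes a single $k$-independent power of $q$.

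The cases $i=1$ and $i=3$ are mild: $x_1$ occurs only in $(x_1+x_2)^{-1}$, and $x_3$ only in $(x_2+x_3+x_4)^{-1}$ and $(x_3+x_4)$, so each involves at most one delicate crossing. The step I expect to be the main obstacle is $i=2$, since $x_2$ appears in \emph{three} places at once --- the left block $(x_2+x_3+x_4)^{-1}$, the central monomial $x_2$, and the right block $(x_1+x_2)^{-1}$ --- so the limiting argument of (3.2)--(3.3) must be invoked at both inverse blocks simultaneously, and the bookkeeping of the accumulated powers of $q$ is heaviest there. The claim follows once one checks, in this case as in the other three, that the two sides reduce to the same monomial times the same power of $q$; by the shift operator (3.1) one then obtains the whole family of generators.
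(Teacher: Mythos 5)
Your overall framework (the degree-zero argument killing the tails $U_{\pm}$, then a finite commutator check on $x_1+x_2+x_3+x_4$) matches the paper's general setup, but from there your route diverges from the paper's actual proof and contains a genuine gap. The paper's entire proof of this claim is a one-line reduction: set $x_3+x_4 = x'_3$. Since $x'_3$ obeys exactly the same $q$-commutation relations with $x_1$, $x_2$ and with all tail variables that a single lattice variable would ($x_1x'_3 = qx'_3x_1$, $x_2x'_3 = qx'_3x_2$, $x'_3x_j = qx_jx'_3$ for $j\ge 5$), the expression becomes $(x_2+x'_3)^{-1}x'_3\,x_2\,(x_1+x_2)^{-1}$, which is \emph{literally} the generator of the preceding claim (Claim 3.2) in the variables $x_1,x_2,x'_3$; nothing new has to be computed. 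You never perform this grouping, and that is not a cosmetic omission --- it is the only idea in the paper's proof, and it is what makes the four-variable case inherit the three-variable computation.

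The concrete failure in your plan is the component-wise decomposition: you propose to prove $x_i\Sigma' = \Sigma' x_i$ separately for $i=1,2,3,4$, but these identities are false individually; only the sum, with $x_3$ and $x_4$ kept together, can commute. Indeed $x_3(x_2+x_3+x_4) = (q^{-1}x_2+x_3+qx_4)\,x_3$, so $x_3$ (and likewise $x_4$) does not cross the trinomial block $(x_2+x_3+x_4)^{-1}$ with any uniform power of $q$, and the expansions (3.2)--(3.3) you invoke are $q$-binomial formulas for \emph{two}-term sums $(x_i+x_j)^{\pm n}$ --- they do not apply to a three-term block at all. Even the case you call mild fails as a standalone identity: one finds $x_1\Sigma' = q\,(x_2+x_3+x_4)^{-1}(x_3+x_4)\,x_2x_1\,(x_1+x_2)^{-1}$ while $\Sigma'x_1 = q\,(x_2+x_3+x_4)^{-1}(x_3+x_4)\,x_2x_1\,(qx_1+x_2)^{-1}$, and these differ for generic $q$. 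By contrast the grouped element behaves exactly like a single variable, $(x_3+x_4)(x_2+x_3+x_4) = (q^{-1}x_2+x_3+x_4)(x_3+x_4)$, which is precisely why the paper's substitution works. The repair of your proof is to replace your four separate checks by three --- for $x_1$, $x_2$, and $x'_3=x_3+x_4$ --- at which point you are re-deriving Claim 3.2 verbatim and should simply invoke it, as the paper does.
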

\begin{proof}
	The proof is identical to the proof for fractional exponent $\frac{1}{2}$. Just what you need is to set $x_3 + x_4 = x'_{3}$. The rest is exactly identical.
\end{proof}
\begin{cl}
	$[\Sigma_{j= -\infty}^{j = +\infty} x_j , (x_2 + \cdots + x_k)^{-1} (x_3 + \cdots + x_k)  x_{2} (x_1 + x_2)^{-1}]=0$
\end{cl}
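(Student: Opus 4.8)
The plan is to deduce this statement from the two three-point Claims already proved in this subsection by \emph{collapsing} the block $x_3+\cdots+x_k$ into a single graded symbol. Set $x'_3:=x_3+x_4+\cdots+x_k$. Then $x_2+\cdots+x_k=x_2+x'_3$ and $x_3+\cdots+x_k=x'_3$, so the element to be tested takes the form
\[
(x_2+x'_3)^{-1}\,x'_3\,x_2\,(x_1+x_2)^{-1}.
\]
This is exactly the element of the second Claim with its two-term block $x_3+x_4$ replaced by the longer block $x'_3$, and, after the index shift $x_i\mapsto x_{i-1}$, it coincides with the element $(x_3+x_4)^{-1}x_4x_3(x_2+x_3)^{-1}$ of the first Claim. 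Thus, provided $x'_3$ may be treated as one generator sitting in position $3$, the entire computation of the first Claim applies word for word.

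The only point that really has to be verified is that $x'_3$ satisfies the same $q$-commutation relations as a single degree-one variable placed between $x_2$ and $x_{k+1}$. Because $x_ix_j=qx_jx_i$ for all $i<j$, each summand of $x'_3$ gives $x_1x_i=qx_ix_1$ and $x_2x_i=qx_ix_2$ for $3\le i\le k$, whence $x_1x'_3=qx'_3x_1$ and $x_2x'_3=qx'_3x_2$; similarly $x_ix_j=qx_jx_i$ for $3\le i\le k<j$ yields $x'_3x_j=qx_jx'_3$ for every $j>k$. Hence $x'_3$ is homogeneous of degree one and $q$-commutes, with the single correct factor of $q$, with everything outside $\{x_1,\dots,x_k\}$. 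In particular the factors $(x_2+x'_3)^{\pm1}$ and $(x_1+x_2)^{\pm1}$ admit the same negative-exponent $q$-binomial expansions established above, since those expansions use nothing about the pair $x_2,x'_3$ beyond the relation $x_2x'_3=qx'_3x_2$.

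Granting this, I would proceed exactly as in the fractional-exponent computation. First, the displayed element has degree $-1+1+1-1=0$, so by the degree identities $[U_{+},P]=(1-q^{-\deg P})U_{+}P$ and $[U_{-},P]=(1-q^{\deg P})U_{-}P$ every $x_j$ with $j\le 0$ or $j\ge k+1$ commutes with it, and $[\Sigma^{x},\,\cdot\,]$ reduces to the bracket with $x_1+x_2+x'_3$. That bracket then splits into the three contributions of $x_1$, $x_2$ and $x'_3$, each handled just like the $x_2$-, $x_3$- and $x_4$-contributions of the first Claim: multiply through by the appropriate $(x_2+x'_3)^{1/2}$ or $(x_1+x_2)^{1/2}$, expand the inverse factor by the $q$-binomial formula, and discard the surviving $q$-corrections via the $k\to\infty$ limit used there.

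The main obstacle is conceptual rather than computational: one must check that throughout the argument the block $x'_3$ is only ever transported \emph{as a whole} and never resolved back into $x_3+\cdots+x_k$, for such a resolution would expose the internal relations $x_ix_j=qx_jx_i$ ($3\le i<j\le k$) that a single variable does not carry. Inspecting the first Claim, the generator in position $3$ is only commuted past its neighbours or combined with them inside a $q$-binomial expansion, never split; so the collapse $x_3+\cdots+x_k\mapsto x'_3$ is faithful and the reduction is complete.
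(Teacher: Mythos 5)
Your proposal is correct and takes essentially the same route as the paper: the paper's own proof (phrased as an induction on $k$ whose ``inductive step'' never actually invokes the hypothesis for $k-1$) likewise collapses a block of consecutive variables into a single symbol $x'_{k-1}$ and reduces to the already-established three-point case. Your one-step collapse $x'_3 := x_3 + \cdots + x_k$, together with the explicit verification that the block $q$-commutes like a single degree-one generator and is never split during the three-point computation, is the same idea carried out somewhat more carefully than in the paper.
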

\begin{proof}
	Proof by induction on $k$.\\
	It is true for $k=3$.\\
	Suppose that it is true for $k-1$ component. Then for $k$ component we have:
	$[\Sigma_{j= -\infty}^{j = +\infty} x_j , ((x_2 + \cdots + x_{k-1}) + x_k)^{-1} ((x_3 + \cdots + x_{k-1}) + x_k)  x_{2} (x_1 + x_2)^{-1}]$\\
	Set $(x_2 + \cdots + x_{k-1}) = x'_{k-1}$;\\
	$\hspace*{35pt} = [\Sigma_{j= -\infty}^{j = +\infty} x_j , ( x'_{k-1} + x_k)^{-1} ( x'_{k-1} + x_k)  x_{2} (x_1 + x_2)^{-1}]$;\\
	So the rest will come from $k=3$ and we are done.
\end{proof}

And then by using the shift operators (\ref{Equ5}), we will have the set of all generators.
\subsection*{Results; Generators of lattice Virasoro algebra coming from 3 and 4-dimensional representation of $sl_2$}
$\vspace*{5pt}$\\
Let us suppose the following $3-$dimensional representation of $sl_2$. The process of defining this representation is the same as $(\ref{Equ8})$.\\
Define:\\
$$F = \partial_{(U_{+} - X_3)}$$
\begin{equation}
H = U_{+} \partial_{U_{+}} + X_1 \partial_{X_{1}} + X_2 \partial_{X_{2}} + X_3 \partial_{X_{3}}
\end{equation}
$$E = (U_{+} - X_3)^{2} \partial_{(U_{+} - X_3)} + (X_{1}^{2} + X_1 X_2 + X_1 X_3 + X_1 (U_{+} - X_3) )\partial_{X_{1}} $$
$$\hspace*{-30pt}+ (X_{2}^{2} + X_2 X_3 + X_2 (U_{+} - X_3))\partial_{X_{2}} + (X_{3}^{2} + X_3 (U_{+} - X_3) )\partial_{X_{3}}$$
As before set\\
$$(F_{k,k+1,k+2}^{x})^{(i)}=[\Sigma^X , [\Sigma^X , [ \cdots , [ \Sigma^X , (F_{1,k,k+1}^{x})^{(i)}]]\cdots ]]$$
$$(F_{k+1,k+2,k+3}^{x})^{(j)}=[\Sigma^X , [\Sigma^X , [ \cdots , [ \Sigma^X , (F_{2,k,k+1}^{x})^{(j)}]]\cdots ]$$
Set $k=1$ and $i=-\frac{1}{2}$ and set $(F_{1,2,3}^{x})^{(-\frac{1}{2})}= X_{1}^{\frac{1}{2}} X_{2}^{-\frac{1}{2}} (X_2 + X_3)^{-\frac{1}{2}}$; because it's satisfying in the relation $[\Sigma^X,[\Sigma^X,(F_{1,2,3}^{x})^{(-\frac{1}{2})}]]=0$ and is our highest vector in this representation.\\
Then \\
$[\Sigma^X,(F_{1,2,3}^{x})^{(-\frac{1}{2})}] = [ U_{-} + (X_1 + X_2 + X_3) + (U_{+} - X_3), X_{1}^{\frac{1}{2}} X_{2}^{-\frac{1}{2}} (X_2 + X_3)^{-\frac{1}{2}}]$\\
$\hspace*{35pt} = (1 - q^{-\frac{1}{2}})(U_{+} - X_3)X_{1}^{\frac{1}{2}} X_{2}^{-\frac{1}{2}} (X_2 + X_3)^{-\frac{1}{2}} $\\
where as usual $U_{+}=\Sigma_{i=3}^{+\infty} X_i$. We call it as usual $(F_{1,2,3}^{x})^{(\frac{1}{2})},$ because it has degree $\frac{1}{2}$.\\
Set $X_1 \rightarrow X_3$ and $X_2 \rightarrow X_4$ and $X_3 \rightarrow X_5$ in $(F_{1,2,3}^{x})^{(-\frac{1}{2})}$, then we will have,\\
\begin{equation}
(F_{3,4,5}^{x})^{(-\frac{1}{2})} = X_{3}^{\frac{1}{2}} X_{4}^{-\frac{1}{2}} (X_4 + X_5)^{-\frac{1}{2}}
\end{equation}
and again in a same process we have,\\
$(F_{3,4,5}^{x})^{(-\frac{1}{2})}= [\Sigma^X , (F_{3,4,5}^{x})^{(-\frac{1}{2})}] $\\
$\hspace*{35pt} = (1 - q^{-\frac{1}{2}})(U_{+} - X_3 - X_4 - X_5)X_{3}^{\frac{1}{2}} X_{4}^{-\frac{1}{2}} (X_4 + X_5)^{-\frac{1}{2}} $\\
Now let us multiply $(F_{1,2,3}^{x})^{(-\frac{1}{2})} = X_{1}^{\frac{1}{2}} X_{2}^{-\frac{1}{2}} (X_2 + X_3)^{-\frac{1}{2}}$ with $(F_{3,4,5}^{x})^{(\frac{1}{2})}=(1 - q^{-\frac{1}{2}})(U_{+} - X_3 - X_4 - X_5) X_{3}^{\frac{1}{2}} X_{4}^{-\frac{1}{2}} (X_4 + X_5)^{-\frac{1}{2}}$ for to have \\
$(F_{1,2,3}^{x})^{(-\frac{1}{2})} (F_{3,4,5}^{x})^{(\frac{1}{2})}$\\
$\hspace*{35pt} = X_{1}^{\frac{1}{2}} X_{2}^{-\frac{1}{2}} (X_2 + X_3)^{-\frac{1}{2}} (1 - q^{-\frac{1}{2}})(U_{+} - X_3 - X_4 - X_5) X_{3}^{\frac{1}{2}} X_{4}^{-\frac{1}{2}} (X_4 + X_5)^{-\frac{1}{2}}$\\
$\hspace*{35pt} = X_{1}^{\frac{1}{2}} X_{2}^{-\frac{1}{2}} (X_2 + X_3)^{-\frac{1}{2}} ((U_{+} - X_3 - X_4 - X_5) X_{3}^{\frac{1}{2}} X_{4}^{-\frac{1}{2}} (X_4 + X_5)^{-\frac{1}{2}}) - q^{-\frac{1}{2}} (U_{+} - X_3 - X_4 - X_5) X_{3}^{\frac{1}{2}} X_{4}^{-\frac{1}{2}} (X_4 + X_5)^{-\frac{1}{2}})$\\
$\hspace*{35pt} =  X_{1}^{\frac{1}{2}} X_{2}^{-\frac{1}{2}} (X_2 + X_3)^{-\frac{1}{2}} (U_{+} - X_3 - X_4 - X_5) X_{3}^{\frac{1}{2}} X_{4}^{-\frac{1}{2}} (X_4 + X_5)^{-\frac{1}{2}} - q^{-\frac{1}{2}} X_{1}^{\frac{1}{2}} X_{2}^{-\frac{1}{2}} (X_2 + X_3)^{-\frac{1}{2}} (U_{+} - X_3 - X_4 - X_5) X_{3}^{\frac{1}{2}} X_{4}^{-\frac{1}{2}} (X_4 + X_5)^{-\frac{1}{2}}$\\
$\hspace*{35pt} =  X_{1}^{\frac{1}{2}} X_{2}^{-\frac{1}{2}} (X_2 + X_3)^{-\frac{1}{2}} U_{+}  X_{3}^{\frac{1}{2}} X_{4}^{-\frac{1}{2}} (X_4 + X_5)^{-\frac{1}{2}}$\\
$\hspace*{35pt} - X_{1}^{\frac{1}{2}} X_{2}^{-\frac{1}{2}} (X_2 + X_3)^{-\frac{1}{2}} X_3  X_{3}^{\frac{1}{2}} X_{4}^{-\frac{1}{2}} (X_4 + X_5)^{-\frac{1}{2}} $\\
$\hspace*{35pt} - X_{1}^{\frac{1}{2}} X_{2}^{-\frac{1}{2}} (X_2 + X_3)^{-\frac{1}{2}} X_4  X_{3}^{\frac{1}{2}} X_{4}^{-\frac{1}{2}} (X_4 + X_5)^{-\frac{1}{2}} $\\
$\hspace*{35pt} - X_{1}^{\frac{1}{2}} X_{2}^{-\frac{1}{2}} (X_2 + X_3)^{-\frac{1}{2}} X_5  X_{3}^{\frac{1}{2}} X_{4}^{-\frac{1}{2}} (X_4 + X_5)^{-\frac{1}{2}}$\\

$\hspace*{35pt} - q^{-\frac{1}{2}} X_{1}^{\frac{1}{2}} X_{2}^{-\frac{1}{2}} (X_2 + X_3)^{-\frac{1}{2}} U_{+}  X_{3}^{\frac{1}{2}} X_{4}^{-\frac{1}{2}} (X_4 + X_5)^{-\frac{1}{2}} $\\
$\hspace*{35pt} - q^{-\frac{1}{2}} X_{1}^{\frac{1}{2}} X_{2}^{-\frac{1}{2}} (X_2 + X_3)^{-\frac{1}{2}} X_{3}  X_{3}^{\frac{1}{2}} X_{4}^{-\frac{1}{2}} (X_4 + X_5)^{-\frac{1}{2}} $\\
$\hspace*{35pt} - q^{-\frac{1}{2}} X_{1}^{\frac{1}{2}} X_{2}^{-\frac{1}{2}} (X_2 + X_3)^{-\frac{1}{2}} X_{4}  X_{3}^{\frac{1}{2}} X_{4}^{-\frac{1}{2}} (X_4 + X_5)^{-\frac{1}{2}} $\\
$\hspace*{35pt} - q^{-\frac{1}{2}} X_{1}^{\frac{1}{2}} X_{2}^{-\frac{1}{2}} (X_2 + X_3)^{-\frac{1}{2}} X_{5}  X_{3}^{\frac{1}{2}} X_{4}^{-\frac{1}{2}} (X_4 + X_5)^{-\frac{1}{2}} $\\
$\hspace*{35pt} = (1 - q^{-\frac{1}{2}})X_{1}^{\frac{1}{2}} X_{2}^{-\frac{1}{2}} (X_2 + X_3)^{-\frac{1}{2}} U_{+}  X_{3}^{\frac{1}{2}} X_{4}^{-\frac{1}{2}} (X_4 + X_5)^{-\frac{1}{2}}$\\
$\hspace*{35pt} + ( 1 - q^{-\frac{1}{2}} )X_{1}^{\frac{1}{2}} X_{2}^{-\frac{1}{2}} (X_2 + X_3)^{-\frac{1}{2}} X_{3}  X_{3}^{\frac{1}{2}} X_{4}^{-\frac{1}{2}} (X_4 + X_5)^{-\frac{1}{2}}$\\
$\hspace*{35pt} + (1 - q^{-\frac{1}{2}} )X_{1}^{\frac{1}{2}} X_{2}^{-\frac{1}{2}} (X_2 + X_3)^{-\frac{1}{2}} X_{4}  X_{3}^{\frac{1}{2}} X_{4}^{-\frac{1}{2}} (X_4 + X_5)^{-\frac{1}{2}}$\\
$\hspace*{35pt} + ( 1 - q^{-\frac{1}{2}}  )X_{1}^{\frac{1}{2}} X_{2}^{-\frac{1}{2}} (X_2 + X_3)^{-\frac{1}{2}} X_{5}  X_{3}^{\frac{1}{2}} X_{4}^{-\frac{1}{2}} (X_4 + X_5)^{-\frac{1}{2}}$\\
And again let us calculate the multiplication:
\begin{equation}
- q^{-\frac{1}{2}} (F_{1,2,3}^{x})^{(\frac{1}{2})} (F_{3,4,5}^{x})^{(-\frac{1}{2})}
\end{equation}
$\hspace*{35pt} = - q^{-\frac{1}{2}} (1 - q^{-\frac{1}{2}})(U_{+} - X_3) X_{1}^{\frac{1}{2}} X_{2}^{-\frac{1}{2}} (X_2 + X_3)^{-\frac{1}{2}} X_{3}^{\frac{1}{2}} X_{4}^{-\frac{1}{2}} (X_4 + X_5)^{-\frac{1}{2}}$\\
$\hspace*{35pt} = - q^{-\frac{1}{2}} (1 - q^{-\frac{1}{2}})U_{+} X_{1}^{\frac{1}{2}} X_{2}^{-\frac{1}{2}} (X_2 + X_3)^{-\frac{1}{2}} X_{3}^{\frac{1}{2}} X_{4}^{-\frac{1}{2}} (X_4 + X_5)^{-\frac{1}{2}}$\\
$\hspace*{35pt} + q^{-\frac{1}{2}} (1 - q^{-\frac{1}{2}}) X_3 X_{1}^{\frac{1}{2}} X_{2}^{-\frac{1}{2}} (X_2 + X_3)^{-\frac{1}{2}} X_{3}^{\frac{1}{2}} X_{4}^{-\frac{1}{2}} (X_4 + X_5)^{-\frac{1}{2}} $\\
$\hspace*{35pt}  = -(1 - q^{-\frac{1}{2}}) X_{1}^{\frac{1}{2}} X_{2}^{-\frac{1}{2}} (X_2 + X_3)^{-\frac{1}{2}} U_{+} X_{3}^{\frac{1}{2}} X_{4}^{-\frac{1}{2}} (X_4 + X_5)^{-\frac{1}{2}}$\\
$\hspace*{35pt} -(1 - q^{-\frac{1}{2}}) X_{1}^{\frac{1}{2}} X_{2}^{-\frac{1}{2}} (X_2 + X_3)^{-\frac{1}{2}} X_{3} X_{3}^{\frac{1}{2}} X_{4}^{-\frac{1}{2}} (X_4 + X_5)^{-\frac{1}{2}} $\\
$\hspace*{35pt} \Rightarrow   (F_{1,2,3}^{x})^{(-\frac{1}{2})} (F_{3,4,5}^{x})^{(\frac{1}{2})} - q^{-\frac{1}{2}} (F_{1,2,3}^{x})^{(\frac{1}{2})} (F_{3,4,5}^{x})^{(-\frac{1}{2})}= (1 - q^{-\frac{1}{2}})X_{1}^{\frac{1}{2}} X_{2}^{-\frac{1}{2}} (X_2 + X_3)^{-\frac{1}{2}} X_{3}^{\frac{1}{2}} \\X_{4}^{-\frac{1}{2}} (X_4 + X_5)^{-\frac{1}{2}} $\\
Lets call it $\rho_{1,5}$. But the coefficients here are not so important for us, so let us skip it and write it as follows:\\
\begin{equation}
\rho_{1,5} = X_{1}^{\frac{1}{2}} X_{2}^{-\frac{1}{2}} (X_2 + X_3)^{-\frac{1}{2}} X_{3}^{\frac{1}{2}} X_{4}^{-\frac{1}{2}} (X_4 + X_5)^{-\frac{1}{2}}
\end{equation}
that is a five points generator of lattice Virasoro algebra, but again we are not interested on it and still looking for a simplest one of type $ABCD$.\\
So let us to define another such kind of solutions as we proposed and experienced already;
\begin{equation}
(F_{2,3,4}^{x})^{(-\frac{1}{2})} := X_{2}^{\frac{1}{2}} X_{3}^{-\frac{1}{2}} (X_3 + X_4)^{-\frac{1}{2}}
\end{equation}
\begin{equation}
(F_{2,3,4}^{x})^{(\frac{1}{2})} := (1 - q^{-\frac{1}{2}}) (U_{+} - X_3 - X_4) X_{2}^{\frac{1}{2}} X_{3}^{-\frac{1}{2}} (X_3 + X_4)^{-\frac{1}{2}}
\end{equation}
and then define;
\begin{equation}
\rho_{1,4} = (F_{1,2,3}^{x})^{(-\frac{1}{2})}(F_{2,3,4}^{x})^{(\frac{1}{2})} - q^{-\frac{1}{2}} (F_{1,2,3}^{x})^{(\frac{1}{2})}(F_{2,3,4}^{x})^{(-\frac{1}{2})}
\end{equation}
Let us calculate it;
\begin{equation}
(F_{1,2,3}^{x})^{(-\frac{1}{2})}(F_{2,3,4}^{x})^{(\frac{1}{2})}
\end{equation}
$\hspace*{35pt} =  X_{1}^{\frac{1}{2}} X_{2}^{-\frac{1}{2}} (X_2 + X_3)^{-\frac{1}{2}} ((1 - q^{-\frac{1}{2}})(U_{+} - X_3 - X_4) X_{2}^{\frac{1}{2}} X_{3}^{-\frac{1}{2}} (X_3 + X_4)^{-\frac{1}{2}}$\\
$\hspace*{35pt}  = (1 - q^{-\frac{1}{2}}) X_{1}^{\frac{1}{2}} X_{2}^{-\frac{1}{2}} (X_2 + X_3)^{-\frac{1}{2}}(U_{+} - X_3 - X_4) X_{2}^{\frac{1}{2}} X_{3}^{-\frac{1}{2}} (X_3 + X_4)^{-\frac{1}{2}}$\\
$\hspace*{35pt}  = (1 - q^{-\frac{1}{2}}) X_{1}^{\frac{1}{2}} X_{2}^{-\frac{1}{2}} (X_2 + X_3)^{-\frac{1}{2}}U_{+} X_{2}^{\frac{1}{2}} X_{3}^{-\frac{1}{2}} (X_3 + X_4)^{-\frac{1}{2}} $\\
$\hspace*{35pt}  - (1 - q^{-\frac{1}{2}}) X_{1}^{\frac{1}{2}} X_{2}^{-\frac{1}{2}} (X_2 + X_3)^{-\frac{1}{2}}X_{3} X_{2}^{\frac{1}{2}} X_{3}^{-\frac{1}{2}} (X_3 + X_4)^{-\frac{1}{2}} $\\
$\hspace*{35pt}  - (1 - q^{-\frac{1}{2}}) X_{1}^{\frac{1}{2}} X_{2}^{-\frac{1}{2}} (X_2 + X_3)^{-\frac{1}{2}}X_{4} X_{2}^{\frac{1}{2}} X_{3}^{-\frac{1}{2}} (X_3 + X_4)^{-\frac{1}{2}} $\\
Now let us calculate the other part as well;
\begin{equation}
- q^{-\frac{1}{2}} (F_{1,2,3}^{x})^{(\frac{1}{2})}(F_{2,3,4}^{x})^{(-\frac{1}{2})}
\end{equation}
$\hspace*{35pt}   =  - q^{-\frac{1}{2}} (1  - q^{-\frac{1}{2}})(U_{+} - X_3)  X_{1}^{\frac{1}{2}} X_{2}^{-\frac{1}{2}} (X_2 + X_3)^{-\frac{1}{2}} X_{2}^{\frac{1}{2}} X_{3}^{-\frac{1}{2}} (X_3 + X_4)^{-\frac{1}{2}}$\\
$\hspace*{35pt}   = - q^{-\frac{1}{2}} (1  - q^{-\frac{1}{2}}) U_{+}  X_{1}^{\frac{1}{2}} X_{2}^{-\frac{1}{2}} (X_2 + X_3)^{-\frac{1}{2}} X_{2}^{\frac{1}{2}} X_{3}^{-\frac{1}{2}} (X_3 + X_4)^{-\frac{1}{2}}$\\
$\hspace*{35pt}   +  q^{-\frac{1}{2}} (1  - q^{-\frac{1}{2}})X_3 X_{1}^{\frac{1}{2}} X_{2}^{-\frac{1}{2}} (X_2 + X_3)^{-\frac{1}{2}} X_{2}^{\frac{1}{2}} X_{3}^{-\frac{1}{2}} (X_3 + X_4)^{-\frac{1}{2}}  $\\
$\hspace*{35pt}  = - (1  - q^{-\frac{1}{2}}) X_{1}^{\frac{1}{2}} X_{2}^{-\frac{1}{2}} (X_2 + X_3)^{-\frac{1}{2}} U_{+} X_{2}^{\frac{1}{2}} X_{3}^{-\frac{1}{2}} (X_3 + X_4)^{-\frac{1}{2}} $\\
$\hspace*{35pt}  + (1  - q^{-\frac{1}{2}})  X_{1}^{\frac{1}{2}} X_{2}^{-\frac{1}{2}} (X_2 + X_3)^{-\frac{1}{2}} X_{3} X_{2}^{\frac{1}{2}} X_{3}^{-\frac{1}{2}} (X_3 + X_4)^{-\frac{1}{2}}$.\\
So we have\\
\begin{equation}
\rho_{1,4} = - (1  - q^{-\frac{1}{2}})  X_{1}^{\frac{1}{2}} X_{2}^{-\frac{1}{2}} (X_2 + X_3)^{-\frac{1}{2}} X_{4} X_{2}^{\frac{1}{2}} X_{3}^{-\frac{1}{2}} (X_3 + X_4)^{-\frac{1}{2}}
\end{equation}
that has degree zero, so it can be one of the generators for lattice Virasoro algebra, but still again is not interested for us, so we should look for a simplest one. And again the coefficient is not important for us, so lets skip it. So we have\\
\begin{equation}
\rho_{1,4} = X_{1}^{\frac{1}{2}} X_{2}^{-\frac{1}{2}} (X_2 + X_3)^{-\frac{1}{2}} X_{4} X_{2}^{\frac{1}{2}} X_{3}^{-\frac{1}{2}} (X_3 + X_4)^{-\frac{1}{2}}
\end{equation}
and
\begin{equation}
\rho_{1,5}^{-1} = (X_4 + X_5)^{\frac{1}{2}}X_{4}^{\frac{1}{2}} X_{3}^{-\frac{1}{2}}(X_4 + X_5)^{-1}(X_2 + X_3)^{\frac{1}{2}}        X_{2}^{\frac{1}{2}}  X_{1}^{-\frac{1}{2}}
\end{equation}
Let us calculate the multiplication $\rho_{1,5}^{-1} \rho_{1,4}$ for to see what will happen?\\
\begin{cl}
	The claim is that $\rho_{1,5}^{-1} \rho_{1,4}$ should give us the answer?
\end{cl}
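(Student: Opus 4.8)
The plan is to evaluate $\rho_{1,5}^{-1}\rho_{1,4}$ by direct computation, imitating the derivation of $\Sigma=(x_3+x_4)^{-\frac12}x_4^{\frac12}x_3^{\frac12}(x_2+x_3)^{-\frac12}$ from ${(\Delta_{1,3})}^{-1}\Delta_{1,2}$, and to show that it collapses to a degree-zero four-factor expression of type $ABCD$, the desired new generator. First I would substitute the explicit forms of $\rho_{1,4}$ and $\rho_{1,5}^{-1}$ recorded just above and concatenate them. The three ``boundary'' pairs of half-integer powers telescope, since $X_1^{-\frac12}X_1^{\frac12}=1$, $X_2^{\frac12}X_2^{-\frac12}=1$ and $(X_2+X_3)^{\frac12}(X_2+X_3)^{-\frac12}=1$; this cancels the six outer factors and leaves
\[
\rho_{1,5}^{-1}\rho_{1,4}=(X_4+X_5)^{\frac12}X_4^{\frac12}X_3^{-\frac12}(X_4+X_5)^{-1}X_4\,X_2^{\frac12}X_3^{-\frac12}(X_3+X_4)^{-\frac12},
\]
which is, term for term, the index shift $x_2\mapsto x_3,\ x_3\mapsto x_4,\ x_4\mapsto x_5$ of the intermediate expression that occurred for ${(\Delta_{1,3})}^{-1}\Delta_{1,2}$.

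The substantive step is then to simplify this middle block by merging the two $(X_4+X_5)$ factors and pushing the bare $X_4$ through the quantum sums. Here the shortcut available in the $\Delta$ case is \emph{not} available: there the intervening degree-zero monomial happened to $q$-commute with the whole sum, namely $Y(X_3+X_4)=q^{-\frac12}(X_3+X_4)Y$ for $Y=X_4^{\frac12}X_3^{-\frac12}$, precisely because $Y$ carries opposite half-powers of \emph{both} constituents; for $(X_4+X_5)$ the same monomial contains only the constituent $X_4$ and therefore fails to $q$-commute with the sum. Instead I would proceed exactly as in the half-integer verification (3.34)--(3.36): expand every $(X_a+X_b)^{\pm\frac12}$ through the $q$-binomial series (3.2)/(3.3) and use the stabilization $q^{-k}\to\mathrm{const}$ as $k\to\infty$, so that a bare $X_i$ passes through a sum $(X_i+X_j)^{s}$ at the cost of a single clean power of $q$. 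Discarding the overall power of $q$ (irrelevant for a generator, as throughout this section), the product collapses to a four-factor expression; by the structural parallel with the derivation of $\Sigma$ one expects the shift-by-one generator
\[
\Sigma'=(x_4+x_5)^{-\frac12}\,x_5^{\frac12}\,x_4^{\frac12}\,(x_3+x_4)^{-\frac12},
\]
the analogue for the three-dimensional representation of the Feigin--Pugai generator \cite{r13}, the precise half-powers being fixed only once the series reductions are carried out.

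It then remains to confirm that $\Sigma'$ is genuinely a generator of the lattice Virasoro algebra. Its degree is $-\frac12+\frac12+\frac12-\frac12=0$, so $H_i\Sigma'=0$ and, by the vanishing of $[U_\pm,\,\cdot\,]$ on degree-zero elements established earlier, the tails $U_\pm$ drop out of $[\Sigma^x,\Sigma']$; one is reduced to checking $[\,x_3+x_4+x_5,\ \Sigma'\,]=0$. This I would verify with the same three-sub-equation scheme as in (3.34)--(3.36): isolate the action of each of $x_3,x_4,x_5$, move it onto the matching quantum sum using the $q$-commutation relations listed above, and absorb the spurious power of $q$ via the $k\to\infty$ limit inside the series. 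Finally the remaining generators are produced from $\Sigma'$ by the shift operators (3.1).

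The main obstacle is precisely the merging of the two $(X_4+X_5)$ factors together with the passage of the bare $X_4$ through the sums: unlike the telescoping of the outer factors, this cannot be done by a single $q$-commutation, because a quantum sum does not $q$-commute with its own constituents, and the convenient degree-zero monomial that rescued the $\Delta$ case has no analogue here. The computation therefore hinges on the (non-rigorous but internally consistent) series-plus-limit device of (3.34)--(3.36); once that is granted, the telescoping and the final commutator check are routine.
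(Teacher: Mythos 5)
Your telescoping step is exactly the paper's first move: cancelling $X_1^{-\frac12}X_1^{\frac12}$, $X_2^{\frac12}X_2^{-\frac12}$ and $(X_2+X_3)^{\frac12}(X_2+X_3)^{-\frac12}$ gives the same merged expression the paper starts from. But from there the proposal goes astray. That merged expression is \emph{not}, term for term, the index shift of the $\Delta$-case intermediate: shifting $(X_3+X_4)^{\frac12}X_4^{\frac12}X_3^{-\frac12}(X_3+X_4)^{-1}X_3\,X_2^{\frac12}X_3^{-\frac12}(X_2+X_3)^{-\frac12}$ by one would carry the monomials $X_5^{\frac12}X_4^{-\frac12}$ and $X_3^{\frac12}X_4^{-\frac12}$, whereas the actual product carries $X_4^{\frac12}X_3^{-\frac12}$ and $X_2^{\frac12}X_3^{-\frac12}$ — the quantum sums and the half-power monomials are out of step by one index, and this mismatch is precisely the new feature of the $3$-dimensional case. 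Because of this misidentification, your predicted output, the shifted generator $(x_4+x_5)^{-\frac12}x_5^{\frac12}x_4^{\frac12}(x_3+x_4)^{-\frac12}$, is not what the computation yields. The paper's own calculation — a chain of $q$-commutations moving $X_3^{-\frac12}$, then $X_4^{\frac12}$, then $X_4$ and $X_2^{\frac12}$ across the sums, and finally discarding the residual degree-zero factor $X_4X_3^{-1}$ together with the overall power of $q$ — ends at
\[
\rho_{1,5}^{-1}\rho_{1,4} = (X_4+X_5)^{-\frac12}\,X_4^{\frac12}\,X_2^{\frac12}\,(X_3+X_4)^{-\frac12},
\]
an invariant involving the \emph{non-adjacent} variables $X_2$ and $X_4$. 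This is exactly the ``ABCD''-type four-point generator announced in the Conclusion (there written as $(X_4+X_5)^{-1}X_4X_2(X_3+X_4)^{-1}$), and producing it — rather than a mere shift of the previously found $\Sigma$ — is the entire content of the claim.

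Beyond the wrong prediction, you never actually perform the decisive step: the merging of the two $(X_4+X_5)$ factors and the passage of the bare $X_4$ is deferred to ``series reductions to be carried out,'' with ``the precise half-powers to be fixed'' afterwards. Since the identity of the resulting degree-zero element is precisely what the claim asserts, deferring that computation leaves the claim unproved. Your diagnosis of the obstacle is sound — $X_4^{\frac12}X_3^{-\frac12}$ does not $q$-commute uniformly with $(X_4+X_5)$, and indeed the paper glosses over this by sliding $X_4^{\frac12}$ past $(X_4+X_5)^{-1}$ with a factor $q^{-\frac12}$ as if the move were clean, and later treats the monomial $X_4X_3^{-1}$ as an ignorable ``coefficient'' — but naming the obstacle, proposing the series-plus-limit device of the earlier verification, and then guessing an incorrect answer does not establish the statement.
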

\begin{proof}
	$\rho_{1,5}^{-1} \rho_{1,4}= (X_4 + X_5)^{\frac{1}{2}}X_{4}^{\frac{1}{2}} X_{3}^{-\frac{1}{2}} (X_4 + X_5)^{-1} X_{4} X_{2}^{\frac{1}{2}} X_{3}^{-\frac{1}{2}} (X_3 + X_4)^{-\frac{1}{2}} $\\
	$\hspace*{35pt}  = (X_4 + X_5)^{\frac{1}{2}}X_{4}^{\frac{1}{2}}  q^{\frac{1}{2}} (X_4 + X_5)^{-1} X_{3}^{-\frac{1}{2}} X_{4} X_{2}^{\frac{1}{2}} X_{3}^{-\frac{1}{2}} (X_3 + X_4)^{-\frac{1}{2}}$\\
	$\hspace*{35pt}  = (X_4 + X_5)^{\frac{1}{2}} q^{-\frac{1}{2}} q^{\frac{1}{2}} (X_4 + X_5)^{-1} X_{4}^{\frac{1}{2}} X_{3}^{-\frac{1}{2}} X_{4}  X_{2}^{\frac{1}{2}} X_{3}^{-\frac{1}{2}} (X_3 + X_4)^{-\frac{1}{2}} $\\
	$\hspace*{35pt}  = (X_4 + X_5)^{-\frac{1}{2}} X_{4}^{\frac{1}{2}} X_{3}^{-\frac{1}{2}} X_{4} X_{2}^{\frac{1}{2}} X_{3}^{-\frac{1}{2}} (X_3 + X_4)^{-\frac{1}{2}}$\\
	$\hspace*{35pt}  = (X_4 + X_5)^{-\frac{1}{2}} X_{4}^{\frac{1}{2}} q^{-\frac{1}{2}} X_{4} X_{3}^{-\frac{1}{2}}  X_{2}^{\frac{1}{2}} X_{3}^{-\frac{1}{2}} (X_3 + X_4)^{-\frac{1}{2}}$\\
	$\hspace*{35pt}  = (X_4 + X_5)^{-\frac{1}{2}} X_{4}^{\frac{1}{2}} q^{-\frac{1}{2}} X_{4} X_{2}^{\frac{1}{2}} q^{\frac{1}{4}} X_{3}^{-1}  (X_3 + X_4)^{-\frac{1}{2}}$\\
	$\hspace*{35pt}  = (X_4 + X_5)^{-\frac{1}{2}} X_{4}^{\frac{1}{2}} X_{2}^{\frac{1}{2}} q^{-\frac{3}{4}} X_{4}  X_{3}^{-1}  (X_3 + X_4)^{-\frac{1}{2}}$\\
	$\hspace*{35pt}  =  q^{-\frac{3}{4}}(X_4 + X_5)^{-\frac{1}{2}} X_{4}^{\frac{1}{2}}  X_{2}^{\frac{1}{2}}   (X_3 + X_4)^{-\frac{1}{2}}$\\
	And as always the coefficients are not important for us, so let us the set the final solution as follows:
	\begin{equation}
	\rho_{1,5}^{-1} \rho_{1,4} = (X_4 + X_5)^{-\frac{1}{2}} X_{4}^{\frac{1}{2}}  X_{2}^{\frac{1}{2}}   (X_3 + X_4)^{-\frac{1}{2}}
	\end{equation}
	as we were looking for.
\end{proof}

Now suppose the following representation of $sl_2$.\\
Define:\\
$$F = \partial_{(U_{+} - X_3 - X_4)}$$
\begin{equation}
H = (U_{+} - X_3 - X_4) \partial_{(U_{+} - X_3 - X_4)} + X_1 \partial_{X_{1}} + X_2 \partial_{X_{2}} + X_3 \partial_{X_{3}}
\end{equation}
$$E = (U_{+} - X_3 - X_4)^{2} \partial_{(U_{+} - X_3 - X_4)} + (X_{1}^{2} + X_1 X_2 + X_1 X_3 + X_1 (U_{+} - X_3 - X_4) )\partial_{X_{1}} $$
$$\hspace*{-30pt} + (X_{2}^{2} + X_2 X_3 + X_2 (U_{+} - X_3 - X_4))\partial_{X_{2}} + (X_{3}^{2} + X_3 (U_{+} - X_3 - X_4) )\partial_{X_{3}}$$
As before set\\
$$(F_{k,k+3}^{x})^{(i)}=(F_{k,k+1,k+2,k+3}^{x})^{(i)}:=[\Sigma^X , [\Sigma^X , [ \cdots , [ \Sigma^X , (F_{k,k+3}^{x})^{(i)}]]\cdots ]]$$
$$(F_{k+1,k+4}^{x})^{(j)}=(F_{k+1,k+2,k+3,k+4}^{x})^{(j)}=[\Sigma^X , [\Sigma^X , [ \cdots , [ \Sigma^X , (F_{k+1,k+4}^{x})^{(j)}]]\cdots ]$$
Set $k=1$ and $i=-\frac{1}{2}$ and set $(F_{1,4}^{x})^{(-\frac{1}{2})} = X_{1}^{\frac{1}{2}}X_{2}^{-\frac{1}{2}}(X_{2} + X_{3}+X_{4})^{-\frac{1}{2}}$.\\
and as what we had already, set
\begin{equation}
(F_{1,4}^{x})^{(\frac{1}{2})}= (1 - q^{(-\frac{1}{2})})(U_{+} - X_3 - X_4)X_{1}^{\frac{1}{2}}X_{2}^{-\frac{1}{2}}(X_{2} + X_{3}+X_{4})^{-\frac{1}{2}}
\end{equation}
where as usual $U_{+} = \Sigma_{i=3}^{+\infty} X_i$.\\
Now as before, set $X_1 \rightarrow X_3$ and $X_2 \rightarrow X_4$ and $X_3 \rightarrow X_5$ and $X_4 \rightarrow X_6$ in $(F_{1,4}^{x})^{(-\frac{1}{2})}$, then we will have,\\
\begin{equation}
(F_{3,6}^{x})^{(-\frac{1}{2})} = (F_{3,4,5,6}^{x})^{(-\frac{1}{2})}= X_{3}^{\frac{1}{2}}X_{4}^{-\frac{1}{2}}(X_{4} + X_{5}+X_{6})^{-\frac{1}{2}}
\end{equation}
\begin{equation}
(F_{3,6}^{x})^{(\frac{1}{2})} = (1 - q^{(-\frac{1}{2})})(U_{+} - X_3 - X_4-X_5 - X_6)X_{3}^{\frac{1}{2}}X_{4}^{-\frac{1}{2}}(X_{4} + X_{5}+X_{6})^{-\frac{1}{2}}
\end{equation}
and then we will proceed as before again
\begin{equation}
(F_{1,4}^{x})^{(-\frac{1}{2})} (F_{3,6}^{x})^{(\frac{1}{2})}
\end{equation}
$ = X_{1}^{\frac{1}{2}}X_{2}^{-\frac{1}{2}}(X_{2} + X_{3}+X_{4})^{-\frac{1}{2}} (1 - q^{(-\frac{1}{2})})(U_{+} - X_3 - X_4-X_5 - X_6)X_{3}^{\frac{1}{2}}X_{4}^{-\frac{1}{2}}(X_{4} + X_{5}+X_{6})^{-\frac{1}{2}}  $\\
$ = (1 - q^{(-\frac{1}{2})}) X_{1}^{\frac{1}{2}}X_{2}^{-\frac{1}{2}}(X_{2} + X_{3}+X_{4})^{-\frac{1}{2}} (U_{+} - X_3- X_4)X_{3}^{\frac{1}{2}}X_{4}^{-\frac{1}{2}}(X_{4} + X_{5}+X_{6})^{-\frac{1}{2}} $\\
$- (1 - q^{(-\frac{1}{2})}) X_{1}^{\frac{1}{2}}X_{2}^{-\frac{1}{2}}(X_{2} + X_{3}+X_{4})^{-\frac{1}{2}} X_5 X_{3}^{\frac{1}{2}}X_{4}^{-\frac{1}{2}}(X_{4} + X_{5}+X_{6})^{-\frac{1}{2}}$\\
$ - (1 - q^{(-\frac{1}{2})}) X_{1}^{\frac{1}{2}}X_{2}^{-\frac{1}{2}}(X_{2} + X_{3}+X_{4})^{-\frac{1}{2}} X_6 X_{3}^{\frac{1}{2}}X_{4}^{-\frac{1}{2}}(X_{4} + X_{5}+X_{6})^{-\frac{1}{2}}$\\
and on the other hand, we have
\begin{equation}
- q^{(-\frac{1}{2})} (F_{1,4}^{x})^{(\frac{1}{2})} (F_{3,6}^{x})^{(-\frac{1}{2})}
\end{equation}
$ = - q^{(-\frac{1}{2})}(1 - q^{(-\frac{1}{2})})(U_{+} - X_3 - X_4) X_{1}^{\frac{1}{2}}X_{2}^{-\frac{1}{2}}(X_{2} + X_{3}+X_{4})^{-\frac{1}{2}} X_{3}^{\frac{1}{2}}X_{4}^{-\frac{1}{2}}(X_{4} + X_{5}+X_{6})^{-\frac{1}{2}}$ \\
$= - (1 - q^{(-\frac{1}{2})}) X_{1}^{\frac{1}{2}}X_{2}^{-\frac{1}{2}}(X_{2} + X_{3}+X_{4})^{-\frac{1}{2}} (U_{+} - X_3 - X_4) X_{3}^{\frac{1}{2}}X_{4}^{-\frac{1}{2}}(X_{4} + X_{5}+X_{6})^{-\frac{1}{2}}$\\
and so
$(F_{1,4}^{x})^{(-\frac{1}{2})} (F_{3,6}^{x})^{(\frac{1}{2})}  - q^{(-\frac{1}{2})} (F_{1,4}^{x})^{(\frac{1}{2})} (F_{3,6}^{x})^{(-\frac{1}{2})} $\\
$= - (1 - q^{(-\frac{1}{2})})  X_{1}^{\frac{1}{2}}X_{2}^{-\frac{1}{2}}(X_{2} + X_{3}+X_{4})^{-\frac{1}{2}} (X_{5}+X_{6}) X_{3}^{\frac{1}{2}}X_{4}^{-\frac{1}{2}}(X_{4} + X_{5}+X_{6})^{-\frac{1}{2}}$\\
as always let us call it
\begin{equation}
\rho_{1,6}=  X_{1}^{\frac{1}{2}}X_{2}^{-\frac{1}{2}}(X_{2} + X_{3}+X_{4})^{-\frac{1}{2}} (X_{4} +X_{5}+X_{6}) X_{3}^{\frac{1}{2}}X_{4}^{-\frac{1}{2}}(X_{4} + X_{5}+X_{6})^{-\frac{1}{2}}
\end{equation}
\begin{equation}
\rho_{1,6}^{-1}=(X_{4} + X_{5}+X_{6})^{\frac{1}{2}}X_{4}^{\frac{1}{2}}X_{3}^{-\frac{1}{2}}(X_{4} +X_{5}+X_{6})^{-1}                            (X_{2} + X_{3}+X_{4})^{\frac{1}{2}}X_{2}^{\frac{1}{2}} X_{1}^{-\frac{1}{2}}
\end{equation}
Now again let us define another such kind of solutions:
\begin{equation}
(F_{2,5}^{x})^{(-\frac{1}{2})} = (F_{2,3,4,5}^{x})^{(-\frac{1}{2})}:= X_{2}^{\frac{1}{2}}X_{3}^{-\frac{1}{2}}(X_{3} + X_{4}+X_{5})^{-\frac{1}{2}}
\end{equation}
\begin{equation}
(F_{2,5}^{x})^{(\frac{1}{2})} = (F_{2,3,4,5}^{x})^{(\frac{1}{2})}:=(1 - q^{(-\frac{1}{2})})(U_{+} - X_3 - X_4- X_5) X_{2}^{\frac{1}{2}}X_{3}^{-\frac{1}{2}}(X_{3} + X_{4}+X_{5})^{-\frac{1}{2}}
\end{equation}
and we are looking for the value of the following objects;
\begin{equation}
(F_{1,4}^{x})^{(-\frac{1}{2})} (F_{2,5}^{x})^{(\frac{1}{2})}
\end{equation}
$=X_{1}^{\frac{1}{2}}X_{2}^{-\frac{1}{2}}(X_{2} + X_{3}+X_{4})^{-\frac{1}{2}} ((1 - q^{(-\frac{1}{2})})(U_{+} - X_3 - X_4- X_5) X_{2}^{\frac{1}{2}}X_{3}^{-\frac{1}{2}}(X_{3} + X_{4}+X_{5})^{-\frac{1}{2}} $\\
$= ((1 - q^{(-\frac{1}{2})}) X_{1}^{\frac{1}{2}}X_{2}^{-\frac{1}{2}}(X_{2} + X_{3}+X_{4})^{-\frac{1}{2}} (U_{+} - X_3 - X_4) X_{2}^{\frac{1}{2}}X_{3}^{-\frac{1}{2}}(X_{3} + X_{4}+X_{5})^{-\frac{1}{2}}$\\
$- ((1 - q^{(-\frac{1}{2})}) X_{1}^{\frac{1}{2}}X_{2}^{-\frac{1}{2}}(X_{2} + X_{3}+X_{4})^{-\frac{1}{2}} X_5 X_{2}^{\frac{1}{2}}X_{3}^{-\frac{1}{2}}(X_{3} + X_{4}+X_{5})^{-\frac{1}{2}} $\\
and on the other side we have:
\begin{equation}
- q^{(-\frac{1}{2})} (F_{1,4}^{x})^{(\frac{1}{2})} (F_{2,5}^{x})^{(-\frac{1}{2})}
\end{equation}
$= - q^{(-\frac{1}{2})} (1 - q^{(-\frac{1}{2})}) (U_{+} - X_3 - X_4)X_{1}^{\frac{1}{2}}X_{2}^{-\frac{1}{2}}(X_{2} + X_{3}+X_{4})^{-\frac{1}{2}} X_{2}^{\frac{1}{2}}X_{3}^{-\frac{1}{2}}(X_{3} + X_{4}+X_{5})^{-\frac{1}{2}} $\\
$= - (1 - q^{(-\frac{1}{2})}) X_{2}^{-\frac{1}{2}}(X_{2} + X_{3}+X_{4})^{-\frac{1}{2}} (U_{+} - X_3 - X_4) X_{2}^{\frac{1}{2}}X_{3}^{-\frac{1}{2}}(X_{3} + X_{4}+X_{5})^{-\frac{1}{2}}  $
so we have:
$\rho_{1,5} = (F_{1,4}^{x})^{(-\frac{1}{2})} (F_{2,5}^{x})^{(\frac{1}{2})} - q^{(-\frac{1}{2})} (F_{1,4}^{x})^{(\frac{1}{2})} (F_{2,5}^{x})^{(-\frac{1}{2})}$
\begin{equation}
\rho_{1,5} = X_{1}^{\frac{1}{2}}X_{2}^{-\frac{1}{2}}(X_{2} + X_{3}+X_{4})^{-\frac{1}{2}} X_5 X_{2}^{\frac{1}{2}}X_{3}^{-\frac{1}{2}}(X_{3} + X_{4}+X_{5})^{-\frac{1}{2}}
\end{equation}
\begin{cl}
	$\rho_{1,6}^{-1} \rho_{1,5}$ has degree zero.
\end{cl}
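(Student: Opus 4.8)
The plan is to argue purely at the level of the $\Lambda$-grading, since ``degree zero'' is precisely the statement that the total grading of the element $\rho_{1,6}^{-1}\rho_{1,5}$ vanishes. I would use two structural facts about this grading that hold throughout the section: it is additive under multiplication and inversion, so $\deg(\xi\eta)=\deg\xi+\deg\eta$ and $\deg(\xi^{-1})=-\deg\xi$; and in the $sl_2$ setting every generator $X_i$ carries the same degree, normalized to $\deg X_i = 1$. Consequently any homogeneous sum such as $X_2+X_3+X_4$ or $X_4+X_5+X_6$ is again of degree $1$, and hence $(X_2+X_3+X_4)^{a}$ has degree $a$ for every exponent $a$ occurring here.

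With these conventions the computation reduces to adding up exponents. First I would record the degree of $\rho_{1,5}$ factor by factor: the seven factors $X_1^{1/2}$, $X_2^{-1/2}$, $(X_2+X_3+X_4)^{-1/2}$, $X_5$, $X_2^{1/2}$, $X_3^{-1/2}$, $(X_3+X_4+X_5)^{-1/2}$ contribute $\tfrac12-\tfrac12-\tfrac12+1+\tfrac12-\tfrac12-\tfrac12=0$. Next, the same bookkeeping applied to $\rho_{1,6}^{-1}$ (whose seven factors carry exponents $\tfrac12,\tfrac12,-\tfrac12,-1,\tfrac12,\tfrac12,-\tfrac12$) again gives $0$, in agreement with $\deg\rho_{1,6}^{-1}=-\deg\rho_{1,6}$. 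Adding the two, $\deg(\rho_{1,6}^{-1}\rho_{1,5})=0+0=0$, which is the assertion.

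There is in fact a cleaner way to organize the same argument, which I would present as the conceptual reason rather than as a coincidence of exponents. The superscript $(i)$ attached to each local field $(F^{x})^{(i)}$ is by construction its grading, and both $\rho_{1,5}$ and $\rho_{1,6}$ are formed as $q$-weighted differences of products of one $(-\tfrac12)$-field with one $(+\tfrac12)$-field; each such product has degree $-\tfrac12+\tfrac12=0$, so $\rho_{1,5}$ and $\rho_{1,6}$ are homogeneous of degree $0$ already before any cancellation. This makes $\rho_{1,6}^{-1}$ of degree $0$ as well, and the product inherits degree $0$ by additivity.

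I do not expect any genuine obstacle here: the statement is a grading count and uses no commutation relations or $q$-calculus. The one point that must be checked, and which I would flag explicitly, is the homogeneity of the composite sums $X_2+X_3+X_4$, $X_3+X_4+X_5$, and $X_4+X_5+X_6$; this is exactly what licenses assigning a single degree to a power such as $(X_2+X_3+X_4)^{-1/2}$, and it holds because all the $X_i$ lie in the same graded component in the $sl_2$ case.
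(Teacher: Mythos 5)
Your proof is correct, but it takes a genuinely different route from the paper's. The paper proves Claim 3.6 by explicit computation: it writes out $\rho_{1,6}^{-1}\rho_{1,5}$, cancels the telescoping middle block $(X_2+X_3+X_4)^{\frac12}X_2^{\frac12}X_1^{-\frac12}\cdot X_1^{\frac12}X_2^{-\frac12}(X_2+X_3+X_4)^{-\frac12}$ down to $1$, and then pushes the surviving factors past one another with the $q$-commutation rules, ending with the explicit element $q^{-\frac34}(X_4+X_5+X_6)^{-\frac12}X_4^{\frac12}X_2^{\frac12}(X_3+X_4+X_5)^{-\frac12}$, from which degree zero is read off. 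You replace all of this by pure bookkeeping in the grading: additivity of $\deg$ under products and inverses, homogeneity of the sums $X_2+X_3+X_4$, $X_3+X_4+X_5$, $X_4+X_5+X_6$ (legitimate here, since in the $sl_2$ lattice setting all $X_i$ carry the same degree), and the exponent counts for $\rho_{1,5}$ and $\rho_{1,6}^{-1}$, both of which you compute correctly to be zero; your conceptual variant --- each $\rho$ is a $q$-weighted difference of products of a $(-\tfrac12)$-field with a $(+\tfrac12)$-field, hence homogeneous of degree $0$ --- is equally valid and is arguably the cleanest formulation. What your route buys: it is shorter, needs no commutation relations at all, and sidesteps the delicate final step of the paper's computation, where the degree-zero factor $X_5X_3^{-1}$ (analogously $X_4X_3^{-1}$ in Claim 3.5) is silently dropped. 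What it does not buy: the paper's computation is not performed for the sake of the degree statement alone --- its real output is the explicit simplified expression recorded immediately afterwards as equation (3.62), which is the ``ABCD-type'' five-point generator the section is seeking. Your grading count, by design, produces no explicit form for $\rho_{1,6}^{-1}\rho_{1,5}$, so it proves the claim as literally stated but not the stronger content the paper extracts from its proof.
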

\begin{proof}
	$\rho_{1,6}^{-1} \rho_{1,5} = (X_{4} + X_{5}+X_{6})^{\frac{1}{2}}X_{4}^{\frac{1}{2}}X_{3}^{-\frac{1}{2}}(X_{4} +X_{5}+X_{6})^{-1} (X_{2} + X_{3}+X_{4})^{\frac{1}{2}}X_{2}^{\frac{1}{2}} X_{1}^{-\frac{1}{2}} X_{1}^{\frac{1}{2}}\\X_{2}^{-\frac{1}{2}}(X_{2} + X_{3}+X_{4})^{-\frac{1}{2}} X_5 X_{2}^{\frac{1}{2}}X_{3}^{-\frac{1}{2}}(X_{3} + X_{4}+X_{5})^{-\frac{1}{2}}$\\
	$= (X_{4} + X_{5}+X_{6})^{\frac{1}{2}}X_{4}^{\frac{1}{2}}X_{3}^{-\frac{1}{2}}(X_{4} +X_{5}+X_{6})^{-1}X_5 X_{2}^{\frac{1}{2}}X_{3}^{-\frac{1}{2}}(X_{3} + X_{4}+X_{5})^{-\frac{1}{2}}$\\
	$= q^{(-\frac{3}{4})}(X_{4} + X_{5}+X_{6})^{-\frac{1}{2}}X_{4}^{\frac{1}{2}} X_{2}^{\frac{1}{2}}(X_{3} + X_{4}+X_{5})^{-\frac{1}{2}}$
\end{proof}
So we have: \\
\begin{equation}
\rho_{1,6}^{-1} \rho_{1,5} = (X_{4} + X_{5}+X_{6})^{-\frac{1}{2}}X_{4}^{\frac{1}{2}} X_{2}^{\frac{1}{2}}(X_{3} + X_{4}+X_{5})^{-\frac{1}{2}}
\end{equation}

\section{Conclusion}

\vspace*{10pt}
Four point invariant, that's coming from the 3-dimensional representation of $sl_2$;\\
$$[\Sigma_{-\infty}^{+\infty} X_i , (X_4 + X_5)^{-1} X_4 X_2 (X_3 + X_4)^{-1}]_q = 0$$
Five point invariant, that's coming from the 4-dimensional representation of $sl_2$;\\
$$[\Sigma_{-\infty}^{+\infty} X_i , (X_4 + X_5+ X_6)^{-1} X_4 X_2 (X_3 + X_4+ X_5)^{-1}]_q = 0$$
\begin{cl}
	We have the following n-point invariant that's coming from the n-dimensional representation.
	$$(X_4 + \cdots + X_{n})^{-1} X_4 X_2 (X_3 + \cdots + X_{n-1})^{-1}$$
\end{cl}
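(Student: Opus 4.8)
The plan is to show that $\Phi_n:=(X_4+\cdots+X_n)^{-1}X_4X_2(X_3+\cdots+X_{n-1})^{-1}$ is a generator of the lattice Virasoro algebra, i.e.\ that $[\,\Sigma^x,\Phi_n\,]=0$ with $\Sigma^x=\sum_{j=-\infty}^{+\infty}X_j$, by induction on $n$. The base cases are the four-point invariant $(X_4+X_5)^{-1}X_4X_2(X_3+X_4)^{-1}$ of the $3$-dimensional representation and the five-point invariant $(X_4+X_5+X_6)^{-1}X_4X_2(X_3+X_4+X_5)^{-1}$ of the $4$-dimensional representation, both of which were produced above as products $\rho^{-1}\rho$ and already checked to commute with $\Sigma^x$. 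So all the remaining work is the reduction of the general case to these.

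First I would cut the doubly-infinite commutator down to a finite one, exactly as in the detailed verification for the fractional generator $(x_3+x_4)^{-\frac12}x_4^{\frac12}x_3^{\frac12}(x_2+x_3)^{-\frac12}$. Since the two inverse sums each carry degree $-1$ while $X_4$ and $X_2$ each carry degree $+1$, the total degree of $\Phi_n$ is $0$; hence, by the computation behind the $U_{+}$ and $U_{-}$ commutators in $(3.0.9)$ and $(3.0.10)$, every $X_j$ whose index lies below the support of $\Phi_n$ or above it $q$-commutes past $\Phi_n$ with prefactor $1-q^{\mp\deg\Phi_n}=0$. Thus $[\,\Sigma^x,\Phi_n\,]=[\,X_2+X_3+\cdots+X_n,\ \Phi_n\,]$, and only this finite piece needs attention.

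For the inductive step I would collapse an adjacent block, concretely $X':=X_{n-2}+X_{n-1}$, which for $n\geq 7$ lies inside both partial sums and is disjoint from the explicit factors $X_4,X_2$ — the same device that reduced Claims 3.6 and 3.7 to their base cases. The key point is that a consecutive partial sum obeys the \emph{same} homogeneous relation $X_iX_j=qX_jX_i$ ($i<j$) with every variable lying entirely outside the block (the relation fixed just before Claim 3.4), so $X'$ behaves toward $\Sigma^x$ precisely like a single generator. Substituting $X'$ and relabelling turns $\Phi_n$ into $\Phi_{n-1}$: the first sum becomes $X_4+\cdots+X_{n-3}+X'+X_n$ and the second becomes $X_3+\cdots+X_{n-3}+X'$, each with the correct number of terms, so the inductive hypothesis applies and iterating lands on the four- or five-point base invariant. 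This is the exact analogue of the one-point enlargement $U_q(\mathfrak n)\to U_q(\mathfrak n)\bar{\otimes}\mathbb{C}_q[X]$ used in Theorem 1.2.3.

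The residual finite check — that each of $X_2,X_3,X_4$ and the collapsed block $q$-commutes with the reduced expression — would then be carried out verbatim as in the $\frac12$-exponent computation: expand each inverse sum by the $q$-binomial formulas $(3.2)$–$(3.3)$, push the chosen $X_i$ through, and annihilate the stray powers of $q$ with the $k\to+\infty$ limit ($q^{-k}\to 1$) that the infinite binomial range supplies. The main obstacle, and the only genuinely new point beyond Claims 3.6–3.7, is to justify that collapsing a single block acts \emph{uniformly} on both multi-term inverse sums $(X_4+\cdots+X_n)^{-1}$ and $(X_3+\cdots+X_{n-1})^{-1}$ simultaneously; once the block is chosen interior to both sums, this uniformity is exactly the homogeneity of the $q$-commutation relation, and the surviving manipulations reduce to routine $q$-calculus.
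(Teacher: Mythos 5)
You should know that the paper contains no proof of this claim at all: it is stated bare in the Conclusion, as an extrapolation of the four-point invariant (3-dimensional representation) and the five-point invariant (4-dimensional representation) computed in Section 3, with only the remark that the shift operators then give all generators. So your proposal is not ``the same as'' or ``different from'' the paper's argument --- it supplies one where the paper has none. That said, your strategy is assembled entirely from the paper's own tools, and assembled correctly: killing the infinite tails reduces $[\Sigma^x,\Phi_n]$ to the finite commutator $[X_2+\cdots+X_n,\Phi_n]$ via the paper's degree computation $[U_{\pm},F]=(1-q^{\mp \deg F})U_{\pm}F$ applied to the degree-zero element $\Phi_n$; and the inductive collapse $X'=X_{n-2}+X_{n-1}$ is precisely the device of the paper's earlier claim $[\sum_j x_j,(x_2+\cdots+x_k)^{-1}(x_3+\cdots+x_k)x_2(x_1+x_2)^{-1}]=0$, proved there by induction on $k$ via the substitution $x'_{k-1}=x_2+\cdots+x_{k-1}$. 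Your bookkeeping is right where it matters: a consecutive block $q$-commutes like a single generator with every variable lying wholly on one side of it; the block $\{X_{n-2},X_{n-1}\}$ avoids the explicit factors $X_2,X_4$ exactly when $n\ge 7$; relabelling then carries $\Phi_n$ onto $\Phi_{n-1}$ verbatim; and the iteration necessarily stops at the five-point case, since for $n=6$ the block would swallow $X_4$ --- which is why the five-point invariant must be kept as a base case alongside the four-point one, as you correctly arranged.

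The one soft spot is your assertion that both base cases were ``already checked to commute with $\Sigma^x$.'' What Section 3 verifies in component-by-component detail is the commutator for the half-exponent generator $(x_3+x_4)^{-\frac12}x_4^{\frac12}x_3^{\frac12}(x_2+x_3)^{-\frac12}$; the objects produced as products $\rho^{-1}\rho$ are likewise half-exponent expressions such as $(X_4+X_5)^{-\frac12}X_4^{\frac12}X_2^{\frac12}(X_3+X_4)^{-\frac12}$, and the integer-exponent four- and five-point invariants quoted in the Conclusion are justified only by the paper's ``the proof is identical to the fractional exponent'' pattern, not by an explicit computation. So to make your induction self-contained you still owe one finite verification: run the paper's $q$-binomial, component-by-component check (with the $k\to+\infty$ limit trick) on the integer-exponent five-point expression $(X_4+X_5+X_6)^{-1}X_4X_2(X_3+X_4+X_5)^{-1}$ and its four-point analogue. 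After your relabelling argument this is the only computation left, and it is routine by the paper's standards.
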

And then by using the shift operators (\ref{Equ5}), we will have the space of all nontrivial generators of lattice Virasoro algebra.\\
We call these kind of generators that are the only nontrivial ones:
$$Generators ~ of ~ type ~ "ABCD" ~ $$

These (new lattice) algebras are so important and may in principle lead to a new integrable chain equations which people can hardly provide.

Now let us check the satisfactory of our generators in quantum Serre relations.\\
We need to show the correctness of
$(X_2 + X_3 + X_4 + X_5)((X_4 + X_5)^{-\frac{1}{2}}X_{4}^{\frac{3}{2}} X_{2}^{\frac{1}{2}}  X_{3}^{-1} (X_3 + X_4)^{\frac{1}{2}}) \stackrel{?}{=} ((X_4 + X_5)^{-\frac{1}{2}}X_{4}^{\frac{3}{2}} X_{2}^{\frac{1}{2}}  X_{3}^{-1} (X_3 + X_4)^{-\frac{1}{2}})(X_2 + X_3 + X_4 + X_5) $\\
So let us proceed it as before on each component:\\
$X_2 (X_4 + X_5)^{-\frac{1}{2}}X_{4}^{\frac{3}{2}} X_{2}^{\frac{1}{2}}  X_{3}^{-1} (X_3 + X_4)^{\frac{1}{2}}\stackrel{?}{=} (X_4 + X_5)^{-\frac{1}{2}}X_{4}^{\frac{3}{2}} X_{2}^{\frac{1}{2}}  X_{3}^{-1} (X_3 + X_4)^{-\frac{1}{2}} X_2$\\
$(X_4 + X_5)^{\frac{1}{2}} X_2 (X_4 + X_5)^{-\frac{1}{2}} X_{4}^{\frac{3}{2}} X_{2}^{\frac{1}{2}}  X_{3}^{-1} \stackrel{?}{=}  X_{4}^{\frac{3}{2}} X_{2}^{\frac{1}{2}}  X_{3}^{-1} (X_3 + X_4)^{-\frac{1}{2}} X_2(X_3 + X_4)^{\frac{1}{2}}  $\\
$X_2 X_{4}^{\frac{3}{2}} X_{2}^{\frac{1}{2}} X_{3}^{-1} \stackrel{?}{=} q^{\frac{1}{2}} X_{4}^{\frac{3}{2}} X_{2}^{\frac{1}{2}}  X_{3}^{-1} X_2 $\\
$X_2 X_{4}^{\frac{3}{2}} X_{2}^{\frac{1}{2}} X_{3}^{-1} \stackrel{?}{=} q^{\frac{1}{2}} q X_{4}^{\frac{3}{2}} X_2 X_{2}^{\frac{1}{2}} X_{3}^{-1}$\\
$X_2 X_{4}^{\frac{3}{2}} X_{2}^{\frac{1}{2}} X_{3}^{-1} \stackrel{?}{=} q^{\frac{1}{2}} q X_{4}^{\frac{3}{2}} X_2 X_{2}^{\frac{1}{2}} X_{3}^{-1}$\\
$X_2 X_{4}^{\frac{3}{2}} X_{2}^{\frac{1}{2}} X_{3}^{-1} = q^{\frac{1}{2}} q  q^{-\frac{3}{2}}X_{4}^{\frac{3}{2}} X_2 X_{2}^{\frac{1}{2}} X_{3}^{-1}$\\
$X_2 X_{4}^{\frac{3}{2}} X_{2}^{\frac{1}{2}} X_{3}^{-1} = X_2 X_{4}^{\frac{3}{2}}  X_{2}^{\frac{1}{2}} X_{3}^{-1}$\\

Lets do it for $X_3$;\\
$X_3 (X_4 + X_5)^{-\frac{1}{2}}X_{4}^{\frac{3}{2}} X_{2}^{\frac{1}{2}}  X_{3}^{-1} (X_3 + X_4)^{\frac{1}{2}}\stackrel{?}{=} (X_4 + X_5)^{-\frac{1}{2}}X_{4}^{\frac{3}{2}} X_{2}^{\frac{1}{2}}  X_{3}^{-1} (X_3 + X_4)^{-\frac{1}{2}} X_3$\\
$q^{-\frac{1}{2}} X_3 X_{4}^{\frac{3}{2}} X_{2}^{\frac{1}{2}}  X_{3}^{-1}  \stackrel{?}{=} q^{-\frac{1}{2}} X_{4}^{\frac{3}{2}} X_{2}^{\frac{1}{2}}  X_{3}^{-1} X_3 $\\
$q^{-\frac{1}{2}} X_3 X_{4}^{\frac{3}{2}} X_{2}^{\frac{1}{2}}  X_{3}^{-1}  \stackrel{?}{=} q^{-\frac{1}{2}}q^{\frac{1}{2}} X_{4}^{\frac{3}{2}}  X_3 X_{2}^{\frac{1}{2}}  X_{3}^{-1} $\\
$q^{-\frac{1}{2}} X_3 X_{4}^{\frac{3}{2}} X_{2}^{\frac{1}{2}}  X_{3}^{-1}  \stackrel{?}{=} q^{-\frac{1}{2}}  X_3 X_{4}^{\frac{3}{2}}  X_{2}^{\frac{1}{2}}  X_{3}^{-1} $\\
And after repeating a similar trend for $X_4$ and $X_5$ we will get the desired result.

\subsection*{Acknowledgments}  It is a great pleasure to thank Professor  Boris Feigin for to suggesting this interesting problem to me and enlightening discussions during the preparation for this thesis and also I wish to thank Professor Yaroslav Pugai for his wonderful article and his very useful advices and comments.

\end{document}